\newtheorem{thm}{Theorem}[section]
\newtheorem{cor}[thm]{Corollary}
\newtheorem{lem}[thm]{Lemma}
\newtheorem{prop}[thm]{Proposition}
\newtheorem{conj}[thm]{Conjecture}
\theoremstyle{definition}
\theoremstyle{remark}
\newtheorem{rem}[thm]{Remark}
\numberwithin{equation}{section}
\begin{document}

\newcommand{\thmref}[1]{Theorem~\ref{#1}}
\newcommand{\thmcref}[1]{Theorem*~\ref{#1}}
\newcommand{\secref}[1]{Section~\ref{#1}}
\newcommand{\lemref}[1]{Lemma~\ref{#1}}
\newcommand{\propref}[1]{Proposition~\ref{#1}}
\newcommand{\propcref}[1]{Proposition*~\ref{#1}}
\newcommand{\corref}[1]{Corollary~\ref{#1}}
\newcommand{\conjref}[1]{Conjecture~\ref{#1}}
\newcommand{\remref}[1]{Remark~\ref{#1}}
\newcommand{\eqnref}[1]{(\ref{#1})}
\newcommand{\exref}[1]{Example~\ref{#1}}

\DeclarePairedDelimiterX\setc[2]{\{}{\}}{\,#1 \;\delimsize\vert\; #2\,}
\newcommand{\bn}[1]{\underline{#1\mkern-4mu}\mkern4mu }

\newcommand{\nc}{\newcommand}

\nc{\Z}{{\mathbb Z}}
\nc{\Zp}{\Z_+}
\nc{\C}{{\mathbb C}}
\nc{\N}{{\mathbb N}}
\nc{\F}{{\mathcal F}}

\nc{\bi}{\bibitem}
\nc{\wt}{\widetilde}
\nc{\wh}{\widehat}
\nc{\dpr}{{\prime \prime}}
\nc{\ov}{\overline}
\nc{\bd}{\boldsymbol}
\nc{\un}{\underline}

\nc{\al}{\alpha}
\nc{\be}{\beta}
\nc{\ga}{\gamma}
\nc{\de}{\delta}
\nc{\ep}{\epsilon}
\nc{\vep}{\varepsilon}
\nc{\La}{\Lambda}
\nc{\la}{\lambda}
\nc{\si}{\sigma}
\nc{\Sig}{{\bd \Sigma}}
\nc{\bog}{{\bd \omega}}

\nc{\mf}{\mathfrak}
\nc{\mc}{\mathcal}
\nc{\cA}{\mc{A}}
\nc{\cB}{\mc{B}}
\nc{\cC}{\mc{C}}
\nc{\ocA}{\ov{\mc{A}}}
\nc{\cN}{\mc{N}}
\nc{\cD}{\mc{D}}
\nc{\fB}{{\mf B}}
\nc{\fb}{{\mf b}}
\nc{\fg}{{\mf g}}
\nc{\fh}{{\mf h}}
\nc{\fL}{\mf{L}}
\nc{\fS}{\mf{S}}
\nc{\cS}{\mc{S}}

\nc{\s}{{\bf s}}
\nc{\w}{{\bf w}}
\nc{\x}{{\bf x}}
\nc{\y}{{\bf y}}
\nc{\z}{{\bf z}}
\nc{\et}{{\bd \eta}}
\nc{\zet}{{\bd \zeta}}

\nc{\pa}{\partial}
\nc{\pz}{\partial_z}
\nc{\Ber}{{\rm Ber}\,}
\nc{\Bers}{{\rm Ber}^{\bf s}\,}
\nc{\cdet}{{\rm cdet}\,}
\nc{\rdet}{{\rm rdet}\,}

\nc{\gl}{{\mf{gl}}}
\nc{\I}{\mathbb{I}}

\nc{\bs}{\bar{s}}
\nc{\bz}{\bar{0}}
\nc{\bo}{\bar{1}}
\nc{\hs}{\hat{s}}
\nc{\bfb}{{\bf b}}
\nc{\cL}{{\mc L}}

\nc{\ds}{\diamondsuit}
\nc{\bB}{{\bf B}}
\nc{\pqmn}{{p+m|q+n}}
\nc{\mn}{{m|n}}
\nc{\gls}{\gl_\ds}
\nc{\gld}{\gl_d}
\nc{\glmn}{\gl_{m|n}}
\nc{\glpm}{\gl_{p+m}}
\nc{\glsm}{t^{-1} \gls [t^{-1}]}
\nc{\gldm}{t^{-1} \gld [t^{-1}]}
\nc{\fhm}{t^{-1} \fh [t^{-1}]}
\nc{\fnpm}{t^{-1} \np [t^{-1}]}
\nc{\fnmm}{t^{-1} \nm [t^{-1}]}

\nc{\Vac}{V_{{\rm crit}}}
\nc{\fz}{{\mf z}}
\nc{\fzr}{\fz(\wh{\gl}_\ds)}
\nc{\fzd}{\fz(\wh{\gl}_d)}
\nc{\fgm}{t^{-1} \fg [t^{-1}]}
\nc{\fgdm}{t^{-1} \gld [t^{-1}]}
\nc{\fgrm}{t^{-1} \gls [t^{-1}]}
\nc{\fzg}{\fz(\wh{\fg})}
\nc{\fzmn}{\fz(\wh{\gl}_{m|n})}

\nc{\ev}{{\rm ev}}
\nc{\evz}{\un{\rm ev}_\z}
\nc{\evzga}{{\rm ev}^\ga_\z}
\nc{\evjz}{\un{\ev}_{(\z, \jjp)}}

\nc{\hc}{\mf f}
\nc{\qc}{S_0}
\nc{\st}{{st}}
\nc{\T}{{\rm T}}
\nc{\cT}{\mc{T}}
\nc{\Xl}{{\bd X}_{\! \ell}}
\nc{\End}{{\rm End}}

\nc{\xai}{x^a_i}
\nc{\xaj}{x^a_j}
\nc{\xbi}{x^b_i}
\nc{\xbj}{x^b_j}
\nc{\yar}{y^a_r}
\nc{\yas}{y^a_s}
\nc{\ybr}{y^b_r}
\nc{\ybs}{y^b_s}
\nc{\yai}{y^a_i}
\nc{\ybi}{y^b_i}
\nc{\pyai}{\pa_{\yai}}
\nc{\pybi}{\pa_{\ybi}}
\nc{\pxai}{\pa_{\xai}}
\nc{\pxaj}{\pa_{\xaj}}
\nc{\pxbi}{\pa_{\xbi}}
\nc{\pxbj}{\pa_{\xbj}}
\nc{\pyar}{\pa_{\yar}}
\nc{\pyas}{\pa_{\yas}}
\nc{\pybr}{\pa_{\ybr}}
\nc{\pybs}{\pa_{\ybs}}
\nc{\pxaip}{\pa_{x^a_{i'}}}

\nc{\blb}{(\!(}
\nc{\brb}{)\!)}
\nc{\lb}{\left(}
\nc{\rb}{\right)}
\nc{\lbb}{\left(\!}
\nc{\rbb}{\! \right)}
\nc{\ls}{\left[}
\nc{\rs}{\! \right]}
\nc{\llangle}{\big\langle}
\nc{\rrangle}{\big\rangle}
\nc{\np}{\mf{n}_+}
\nc{\nm}{\mf{n}_-}

\nc{\twa}{t_{w_a}}
\nc{\twb}{t_{w_b}}
\nc{\twi}{t_{w_i}}
\nc{\tzi}{t_{z_i}}
\nc{\tzj}{t_{z_j}}
\nc{\otwa}{\ov{t}_{w_a}}
\nc{\otwb}{\ov{t}_{w_b}}
\nc{\otzi}{\ov{t}_{z_i}}
\nc{\otzj}{\ov{t}_{z_j}}

\nc{\jp}{\ga}
\nc{\jjp}{{\bd \ga}}
\nc{\bjp}{\xi}
\nc{\bjjp}{{\bd \xi}}
\nc{\va}{d}
\nc{\rva}{l}
\nc{\J}{{\rm J}}
\nc{\Lr}{\cL_{\ds}}

\nc{\dd}{{d^\prime}}
\nc{\el}{{\bd \ell}}

\nc{\gz}{\fg({\z, \jjp})}
\nc{\gldz}{\gld({\z, \jjp})}
\nc{\gldzb}{\gld({\z, \jjp})}
\nc{\glsz}{\gls({\z, \jjp})}
\nc{\glsw}{\gls({\w,  \bjjp})}
\nc{\glssw}{\gl_\pqmn({\w,  \bjjp})}
\nc{\glpmwa}{\gl_{p+m}({w_a,  \bjp_a})}

\nc{\vpd}{\phi}
\nc{\vps}{\varphi}

\nc{\Adzw}{\cA_d^{\w, \, \bjjp}(\z,  \jjp)}
\nc{\Arwz}{\cA_{\ds}^{\z,  \jjp}(\w, \bjjp)}
\nc{\Aswz}{\cA_{\pqmn}^{\z,  \jjp}(\w, \bjjp)}
\nc{\Ldzw}{\cL_d^{\w, \, \bjjp}(\z,  \jjp)}
\nc{\wLdzw}{\wh{\cL}_d^{\w, \, \bjjp}(\z,  \jjp)}
\nc{\Lrwz}{\cL_{\ds}^{\z,  \jjp}(\w, \bjjp)}
\nc{\wLrwz}{\wh{\cL}_{\ds}^{\z,  \jjp}(\w, \bjjp)}

\nc{\Cdzw}{\ov\cA_d^{\w, \, \bjjp}(\z,  \jjp)}
\nc{\Crwz}{\ov\cA_{\ds}^{ \z,  \jjp}(\w, \bjjp)}
\nc{\Cswz}{\ov\cA_{\pqmn}^{ \z,  \jjp}(\w, \bjjp)}
\nc{\rLs}{\ov\cL_{\ds}}
\nc{\rLdzw}{\ov\cL_d^{\w, \, \bjjp}(\z,  \jjp)}
\nc{\rLswz}{\ov\cL_{\ds}^{\z,  \jjp}(\w, \bjjp)}
\nc{\Ldmu}{\cL_d^{\mu_{\bjjp}^{\w}}(\z, \jjp)}
\nc{\wLdmu}{\wh{\cL}_d^{\mu_{\bjjp}^{\w}}(\z, \jjp)}

\nc{\Pmuzz}{\Psi^{\mu, \jjp, z}_{\z}}
\nc{\Pmuzw}{\Psi^{\mu, \jjp, w}_{\z}}
\nc{\Phmuz}{\ov\Psi^{\mu}_{(\z, \jjp), \, z}}
\nc{\Phmuw}{\ov\Psi^{\mu}_{(\z, \jjp),\, w}}
\nc{\Pmuz}{\Psi^{\mu}_{(\z, \jjp)}}
\nc{\Phz}{\ov\Psi_{(\z, \jjp), \, z}}
\nc{\Phw}{\ov\Psi_{(\z, \jjp),\, w}}

\nc{\Xd}{{\bd X}_{\! \dd}}
\nc{\Xpqmn}{{\bd X}_{\! \el}}
\nc{\gldzi}{\gld({z_i, \jp_i})}
\nc{\glszi}{\gls({z_i, \jp_i})}
\nc{\glswa}{\gls({w_a,  \bjp_a})}
\nc{\hswa}{\fh_\ds({w_a,  \bjp_a})}
\nc{\bswa}{\fb_\ds({w_a,  \bjp_a})}

\nc{\gltga}{\gl^{(\ga)}_\ds[t]}
\nc{\htga}{\fh^{(\ga)}_\ds[t]}
\nc{\btga}{\fb^{(\ga)}_\ds[t]}
\nc{\gltd}{\gl^{(d)}_\ds[t]}

\nc{\Vk}{\bn V{(k_1, \ldots, k_{\dd})}}
\nc{\Wmu}{\bn W{(\mu_1, \ldots, \mu_\el)}}
\nc{\Wmub}{{\bn W{(\ov{\mu}_1, \ldots, \ov{\mu}_\el)}}}
\nc{\Wmubk}{{\bn W{(\ov{\mu}_1, \ldots, \ov{\mu}_\el)}}_{[k_1, \ldots, k_{\dd}]}}

\nc{\zpz}{\blb z^{-1}, \pz^{-1} \brb}
\nc{\dds}{{\bf d}_{\F}}
\nc{\ppxai}{p_{x^a_i}}
\nc{\ppyar}{p_{y^a_r}}
\nc{\ppxbj}{p_{x^b_j}}
\nc{\ppybs}{p_{y^b_s}}
\nc{\oD}{\ov{\cD}}
\nc{\ovpd}{\ov\vpd}
\nc{\ovps}{\ov\vps}

\nc{\disp}{\displaystyle}
\nc{\tns}{\kern-1.1pt}

\title[Dualities of Gaudin models with irregular singularities]
{Dualities of Gaudin models with irregular singularities for general linear Lie (super)algebras}

\author[W. K. Cheong]{Wan Keng Cheong}
\address{Department of Mathematics, National Cheng Kung University, Tainan, Taiwan 701401}
\email{keng@ncku.edu.tw}

\author[N. Lam]{Ngau Lam}
\address{Department of Mathematics, National Cheng Kung University, Tainan, Taiwan 701401}
\email{nlam@ncku.edu.tw}

\begin{abstract}

We prove an equivalence between the actions of the Gaudin algebras with irregular singularities for $\gld$ and $\gl_\pqmn$ on the Fock space of $d(p+m)$ bosonic and $d(q+n)$ fermionic oscillators. This establishes a duality of $(\gld, \gl_\pqmn)$ for Gaudin models. As an application, we show that the Gaudin algebra with irregular singularities for $\gl_\pqmn$ acts cyclically on each weight space of a certain class of infinite-dimensional modules over a direct sum of Takiff superalgebras over $\gl_\pqmn$ and that the action is diagonalizable with a simple spectrum under a generic condition.
We also study the classical versions of Gaudin algebras with irregular singularities and demonstrate a duality of $(\gld, \gl_\pqmn)$ for classical Gaudin models.

\end{abstract}

\maketitle

\setcounter{tocdepth}{1}

\section{Introduction}

Rybnikov \cite{Ry06} and Feigin, Frenkel, and Toledano Laredo \cite{FFTL} introduce Gaudin models with irregular singularities associated to any simple Lie algebra over $\C$ based on the works \cite{FF, FFR, Fr05}. Their constructions can also be applied to any general linear Lie (super)algebra.

A number of dualities relating the Gaudin models for a pair of general linear Lie (super)algebras are demonstrated under some restrictions on singularities; see \cite{ChL25-1, HM, MTV09, TU}.
Remarkably, Vicedo and Young \cite{VY} establish a duality of Gaudin models for any pair of general linear Lie algebras without imposing such restrictions.

The main goal of this paper is to generalize Vicedo--Young's duality to the super setting.
We will prove a duality of Gaudin models with irregular singularities for the pair $(\gld, \gl_\pqmn)$, where $\gld$ and $\gl_\pqmn$ are the general linear Lie (super)algebras defined in \secref{gl}.

Now we describe our duality in more concrete terms.
Let $\w=(w_1, \ldots, w_\dd)$ and $\z=(z_1, \ldots, z_{\el})$ be sequences of complex numbers,
and let $\bjjp=(\bjp_1,\ldots, \bjp_{\dd})$ and $\jjp=(\jp_1,\ldots, \jp_{\el})$ be sequences of positive integers, where $\dd$ and $\el$ are some positive integers satisfying $\dd \le d$ and $\el \le p+q+m+n$; see \secref{Fock} for details.
Let $t$ be an even variable and write $t_a=t-a$ for $a \in \C$.
We consider the direct sums of Lie (super)algebras
$$
\gldzb:=\bigoplus_{i=1}^{\el} \gld[\tzi]/\tzi^{\jp_i}\gld[\tzi]
$$
and
$$
\glssw:=\bigoplus_{i=1}^{\dd} \gl_\pqmn[\twi]/\twi^{\bjp_i}\gl_\pqmn[\twi].
$$
Here, for instance, $\gld[\tzi]:=\gld \otimes \C[\tzi]$ and $\tzi^{\jp_i}\gld[\tzi]:=\gld \otimes \tzi^{\jp_i} \C[\tzi]$. The quotient Lie (super)algebras $\gld[\tzi]/\tzi^{\jp_i}\gld[\tzi]$ and $\gl_\pqmn[\twi]/\twi^{\bjp_i}\gl_\pqmn[\twi]$ are called Takiff (super)algebras over $\gld$ and $\gl_\pqmn$, respectively; see \secref{takiff}.

\sloppy
Let $\Adzw$ denote the \emph{Gaudin algebra for $\gld$ with singularities of orders $\jp_i$ at $z_i$, $i=1, \ldots, \el$, relative to $\w$ and $\bjjp$}, 
and let $\Aswz$ denote the \emph{Gaudin algebra for $\gl_\pqmn$ with singularities of orders $\bjp_i$ at $w_i$, $i=1, \ldots, \dd$, relative to $\z$ and $\jjp$}; see \secref{Gaudin-gld}, \secref{Gaudin-gls} and \eqref{Gaudin-Jordan}.
The algebras $\Adzw$ and $\Aswz$ are commutative subalgebras of the universal enveloping algebras $U \big(\gldzb \big)$ and $U \big(\glssw \big)$, respectively.
Also, for $1 \le i \le \el$, the singularity at $z_i$ is said to be regular if $\jjp_i=1$ and irregular otherwise. The singularity type (either regular or irregular) of each $w_j$ is defined similarly.

Let $\F$ be the polynomial superalgebra generated by $\xai$ and $\yar$, for $i=1, \ldots, m+n$, $r=1, \ldots, p+q$ and $a=1, \ldots, d$. Here the variable $\xai$ (resp., $\yar$) is even for $1 \le i \le m$ (resp., $1 \le r \le p$) and is odd otherwise.
There are commuting actions of $\gld$ and $\gl_\pqmn$ on $\F$ that form a Howe dual pair \cite{CLZ}.
The superalgebra $\F$ can be realized as the Fock space of $d(p+m)$ bosonic and $d(q+n)$ fermionic oscillators (cf. \cite{CL03, LZ06}).
Let $\cD$ be the corresponding Weyl superalgebra, which is the superalgebra generated by $\xai$ and $\yar$ as well as their derivatives $\frac{\pa}{\pa x^a_i}$ and $\frac{\pa}{\pa y^a_r}$.
The superalgebra $\cD$ naturally acts on $\F$ and is a subalgebra of the endomorphism algebra $\End(\F)$ of $\F$.
We construct superalgebra homomorphisms $\vpd : U \big(\gldzb \big) \longrightarrow \cD$ and $\vps : U \big(\glssw \big) \longrightarrow \cD$ (see \propref{gld-map} and \propref{glr-map}) and obtain the \emph{duality of $(\gld, \gl_\pqmn)$ for Gaudin models with irregular singularities}.

\begin{thm} [\thmref{Gaudin-duality}] \label{main1}
$
\vpd \big( \Adzw \big)=\vps \big(\Aswz \big).
$
\end{thm}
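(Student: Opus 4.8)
The plan is to establish the equality of the two images inside the Weyl superalgebra $\cD$ by realizing both Gaudin algebras through a common ``master'' construction, namely the image of a single Bethe-type subalgebra built from a generating matrix over $\cD$. First I would write down explicitly, in terms of the oscillators $\xai, \yar$ and their derivatives, the generating series of $\vpd(\Adzw)$ and of $\vps(\Aswz)$. For the $\gld$ side this is a $d\times d$ matrix-valued rational function in a spectral parameter (with poles of the prescribed orders $\jp_i$ at $z_i$) whose entries are quadratic in $\cD$, with coefficients extracted via the Feigin--Frenkel / Rybnikov construction (column-determinant or the higher Gaudin Hamiltonians); similarly the $\gl_\pqmn$ side gives a $(p+q+m+n)\times(p+q+m+n)$ super-matrix-valued rational function with poles of orders $\bjp_i$ at $w_i$. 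The key observation --- which is the super-analogue of the Vicedo--Young mechanism --- is that under the Howe duality on $\F$, the two Lax matrices are built from the \emph{same} $d(p+m)+d(q+n)$ oscillator pairs arranged into a rectangular array, so that the $\gld$ Lax operator and the $\gl_\pqmn$ Lax operator are, up to a change of spectral variable interchanging the roles of the $z$'s and $w$'s, transposes of one another in a suitable sense.

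The key steps, in order, are: (1) fix the explicit oscillator formulas for $\vpd$ on $\gldzb$ and for $\vps$ on $\glssw$ from \propref{gld-map} and \propref{glr-map}, and assemble the corresponding Lax matrices $L^d(u)$ (size $d$) and $L^{\mn}(v)$ (size $p+q+m+n$) with entries in $\cD[u^{\pm}]$, resp.\ $\cD[v^{\pm}]$; (2) prove a ``$(\gld,\gl_\pqmn)$ bispectrality'' identity relating $L^d(u)$ and $L^{\mn}(v)$ --- concretely, that the characteristic-type series generating the higher Hamiltonians of one model can be rewritten, after clearing denominators and tracking the $p|q$, $m|n$ sign rules, as the corresponding series for the other model with $u$ and $v$ swapped appropriately; (3) deduce from this identity that each generator of $\vpd(\Adzw)$ lies in $\vps(\Aswz)$ and conversely, giving both inclusions. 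For step (2) I expect it to be cleanest to work with the Berezinian/column-determinant $\Bers$ of the super-Lax matrix on the $\gl_\pqmn$ side (this is why the macros $\Ber,\Bers,\cdet,\rdet$ are in the preamble) and match its coefficients against those of the ordinary column-determinant of $L^d(u)$, using the classical fact that a rectangular bosonic-fermionic oscillator array intertwines these two determinant-type invariants (a Capelli/Howe-type identity).

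The main obstacle, and the genuinely new content over Vicedo--Young, will be the bookkeeping of the super signs and the non-reductive (Takiff) structure simultaneously. In the purely even case of \cite{VY} one matches ordinary determinants of Lax matrices with regular-singularity poles already packaged; here the poles have arbitrary orders $\jp_i,\bjp_i$ (so the Lax matrix is a genuine Laurent-type expansion at each $z_i$, $w_i$, not just a simple-pole partial fraction), and the determinant becomes a Berezinian whose expansion involves inverses of the ``even-even block'', so one must check that all the relevant series make sense in $\cD\llbracket u^{-1}\rrbracket$ or an appropriate completion and that the intertwining identity survives the localization. I would isolate this as a lemma: the Berezinian of the $\gl_\pqmn$ super-Lax matrix, expanded at the singular points, equals (a normalization times) the column-determinant of the $\gld$ Lax matrix expanded at the dual singular points, as an identity in a common completion of $\cD$ tensored with the appropriate rational functions. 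Once that lemma is in hand, Theorem~\ref{main1} follows because the coefficients of these (equal) series are precisely sets of generators for $\vpd(\Adzw)$ and $\vps(\Aswz)$ respectively, by the very definitions in \secref{Gaudin-gld} and \secref{Gaudin-gls}. A secondary technical point is to verify that the Takiff quotients are respected --- i.e.\ that $\vpd$ and $\vps$ really factor through $\tzi^{\jp_i}$, resp.\ $\twi^{\bjp_i}$, truncations --- but this should already be part of \propref{gld-map} and \propref{glr-map} and so can be cited rather than reproved.
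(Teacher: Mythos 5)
Your proposal follows essentially the same route as the paper: the ``lemma'' you isolate is precisely \thmref{cdet-Ber}, which the paper proves by computing the Berezinian of a single $(d+p+q+m+n)$-sized Manin matrix over $\cD$ (Jordan blocks on the diagonal, the oscillator arrays off-diagonal) via two different Schur-complement factorizations, and the reduction from that identity to equality of the algebras is exactly \propref{L'} (which justifies the swap of the roles of $z$ and $\pz$ on the $\gld$ side). The only point worth making explicit in your write-up is that last step: the generators of $\Adzw$ are by definition coefficients of the expansion in powers of $\pz$ with coefficients in $U[\tns[z^{-1}]\tns]$, whereas the identity naturally produces the expansion in powers of $z$ with coefficients in $U[\tns[\pz^{-1}]\tns]$, so you need the statement that both expansions generate the same subalgebra.
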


The maps $\vpd$ and $\vps$ induce an action of $\Adzw$ and an action of $\Aswz$ on the Fock space $\F$, respectively.
\thmref{main1} also says that these actions are equivalent.

A few remarks are in order. In \cite{ChL25-1} (and also in \cite{HM, MTV09, TU} if some of $p,q,m,n$ are set to 0), a duality of $(\gld, \gl_\pqmn)$ in the same spirit is established when the entries of $\jjp$ and $\bjjp$ are all 1 (i.e., the singularities at $z_i$'s and $w_j$'s are all regular).
These conditions mean that the duality considered there concerns only the Gaudin models with the simplest possible singularities.
\thmref{main1} is, however, about a duality of Gaudin models with general singularity types (i.e., no restrictions are imposed on the singularities at $z_i$'s and $w_j$'s).
After specializing to $p=q=n=0$, the theorem recovers the duality of $(\gld, \gl_m)$ obtained by Vicedo and Young \cite[Theorem 4.8]{VY} in the bosonic setting. 
Furthermore, \thmref{main1} yields a duality in the fermionic setting by taking $p=q=m=0$; see \corref{fermi}.
Specializing to $q=n=0$, the theorem also deduces the duality of $(\gld, \gl_{m+p})$ for the bosonic oscillators in which the Fock space $\F$ decomposes into a direct sum of tensor products of infinite-dimensional modules over the general linear Lie algebra $\gl_{m+p}$; see \corref{bosonic}.

We can apply \thmref{main1} to study the Gaudin algebras in the case of $\jjp=(1^\el)$, where $\el=p+q+m+n$.
Using the properties of the action of $\cA_d^{\w, \bjjp}(\z, (1^\el))$ on finite-dimensional $\gld$-modules, we can obtain some consequences for the action of $\cA_\pqmn^{\z,  (1^{\el}) }(\w, \bjjp)$ on a certain class of $\glssw$-modules.
Let $\bn M=M_1 \otimes \cdots \otimes M_{\dd}$, where $M_i$ is a certain infinite-dimensional $\gl_\pqmn[\twi]/\twi^{\bjp_i}\gl_\pqmn[\twi]$-module for $1 \le i \le \dd$.
We will prove in \thmref{Gaudin-app} that every weight space of $\bn M$ is a cyclic $\cA_\pqmn^{\z,  (1^{\el}) }(\w, \bjjp)$-module, and that $\cA_\pqmn^{\z,  (1^{\el}) }(\w, \bjjp)$ is diagonalizable with a simple spectrum on the weight space for generic $\w$ and $\z$.

In this paper, we also study the classical versions of Gaudin models.
Let $\cS \big(\gldzb \big)$ and $\cS \big(\glssw \big)$ be the supersymmetric algebras of $\gldz$ and $\glssw$, respectively.
They are also Poisson superalgebras and may be viewed as classical counterparts of $U \big(\gldzb \big)$ and $U \big(\glssw \big)$, respectively.
Let $\Cdzw \subseteq \cS \big(\gldzb \big)$ be the \emph{classical Gaudin algebra for $\gld$ with singularities of orders $\jp_i$ at $z_i$, $i=1, \ldots, \el$, relative to $\w$ and $\bjjp$}, 
and let $\Cswz \subseteq \cS \big(\glssw \big)$ be the \emph{classical Gaudin algebra for $\gl_\pqmn$ with singularities of orders $\bjp_i$ at $w_i$, $i=1, \ldots, \dd$, relative to $\z$ and $\jjp$}; see \secref{cl-gld} and \secref{cl-glr}.
The classical Gaudin algebras $\Cdzw$ and $\Cswz$ are Poisson-commutative.

Let $\oD$ be the polynomial superalgebra generated by the variables $\xai$, $\yar$, $\ppxai$ and $\ppyar$, for $i=1, \ldots, m+n$, $r=1, \ldots, p+q$ and $a=1, \ldots, d$, where $\xai$ and $\ppxai$ (resp., $\yar$ and $\ppyar$) are even for $1 \le i \le m$ (resp., $1 \le r \le p$) and are odd otherwise. 
It is a Poisson superalgebra with the Poisson bracket defined by \eqref{Pb1} and \eqref{Pb2}.
There also exist Poisson superalgebra homomorphisms $\ovpd : \cS \big(\gldzb \big) \longrightarrow \oD$ and $\ovps : \cS \big(\glssw \big) \longrightarrow \oD$; see \propref{map-classical}.
The duality in \thmref{main1} has a classical version, which we call the \emph{duality of $(\gld, \gl_\pqmn)$ for classical Gaudin models with irregular singularities}.

\begin{thm} [\thmref{class-dual}] \label{main2}
$\ovpd \big( \Cdzw \big)=\ovps \big(\Cswz \big)$.
\end{thm}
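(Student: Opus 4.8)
The plan is to deduce the classical duality from the quantum duality \thmref{main1} by a degeneration argument, exploiting the fact that the classical Gaudin algebras and the classical homomorphisms $\ovpd, \ovps$ are the ``associated graded'' objects of their quantum counterparts. First I would set up the relevant filtrations: the PBW filtration on $U\big(\gldzb\big)$ and on $U\big(\glssw\big)$, whose associated graded algebras are the supersymmetric (Poisson) algebras $\cS\big(\gldzb\big)$ and $\cS\big(\glssw\big)$; and a compatible filtration on the Weyl superalgebra $\cD$ (by total degree in the generators and their derivatives) whose associated graded algebra is the commutative Poisson superalgebra $\oD$. The key structural claim to verify is that the quantum maps $\vpd$ and $\vps$ are filtered, and that the induced maps on associated graded algebras are precisely $\ovpd$ and $\ovps$. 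This should follow by inspecting the explicit formulas in \propref{gld-map}, \propref{glr-map} and comparing with \propref{map-classical}: the symbol of each generator's image under $\vpd$ (resp.\ $\vps$) is its image under $\ovpd$ (resp.\ $\ovps$).

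Next I would check that the Gaudin algebras are compatible with these filtrations, namely that the classical Gaudin algebra $\Cdzw$ is the associated graded of the quantum Gaudin algebra $\Adzw$ inside $U\big(\gldzb\big)$, and similarly $\Cswz = \operatorname{gr}\Aswz$. This is where I expect the main obstacle to lie: one needs that the filtration degree of each generating Gaudin Hamiltonian is exactly its expected (top) degree, i.e.\ that no unexpected cancellation drops the degree, so that the symbols of the quantum Gaudin generators generate the full classical Gaudin algebra. For the Gaudin algebra this should be tractable because the higher Gaudin Hamiltonians are built from (super)symmetrized expressions — coefficients of the Berezinian-type operators $\Ber$ / column-determinants — whose leading symbols are manifestly the classical Gaudin Hamiltonians; the definitions in \secref{cl-gld}, \secref{cl-glr} are presumably arranged precisely so that $\operatorname{gr}$ of the quantum construction reproduces them. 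I would cite \cite{Ry06, FFTL} or the cited constructions for the statement that $\Adzw$ is a free module over its ``leading terms'' so that taking $\operatorname{gr}$ commutes with the generation.

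With these pieces in place the theorem is immediate. Since $\vpd$ is filtered and $\operatorname{gr}\vpd = \ovpd$, we have
\[
\ovpd\big(\Cdzw\big)=\operatorname{gr}\vpd\big(\operatorname{gr}\Adzw\big)=\operatorname{gr}\big(\vpd(\Adzw)\big),
\]
and likewise $\ovps\big(\Cswz\big)=\operatorname{gr}\big(\vps(\Aswz)\big)$ as subspaces of $\oD=\operatorname{gr}\cD$. By \thmref{main1}, $\vpd(\Adzw)=\vps(\Aswz)$ as subspaces of $\cD$, hence their associated graded subspaces in $\oD$ coincide, giving $\ovpd\big(\Cdzw\big)=\ovps\big(\Cswz\big)$. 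One small point I would address explicitly is that equality of two subspaces of a filtered space does force equality of their associated graded subspaces (true, since $\operatorname{gr}$ of a subspace depends only on the subspace and the ambient filtration); and that the Poisson brackets match up under $\operatorname{gr}$, so the identification is one of Poisson subalgebras, not merely of vector spaces — though for the bare statement $\ovpd(\Cdzw)=\ovps(\Cswz)$ only the vector-space equality is needed. An alternative, more self-contained route, in case the filtration-compatibility of the Gaudin algebras is delicate, would be to rescale: introduce a Planck-type parameter $\hbar$, replace each generator $X$ of the Takiff algebra by $\hbar X$ inside $\cD$ (using the grading on $\cD$), and take the $\hbar\to 0$ limit of the identity $\vpd(\Adzw)=\vps(\Aswz)$ term by term; this realizes the same degeneration concretely and avoids any abstract discussion of $\operatorname{gr}$.
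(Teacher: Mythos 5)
Your reduction to \thmref{main1} hinges on the claim that $\Cdzw=\operatorname{gr}\big(\Adzw\big)$ and $\Cswz=\operatorname{gr}\big(\Aswz\big)$, i.e.\ that the quantum Gaudin algebras are quantizations of the classical ones. This is precisely \conjref{gr} of the paper, which the authors state is ``wide open'' in general: it is known only in special cases, namely for $\cA_d^{0}(\z,\jjp)$ (by Feigin--Frenkel--Toledano Laredo, see \eqref{FFTL}) and for the shift-of-argument case $\z=(0)$, $\jjp=(1)$ with arbitrary $\mu\in\gld^*$ (Futorny--Molev, see \eqref{FM}). The duality here involves $\cA_d^{\mu_{\bjjp}^{\w}}(\z,\jjp)$ with a nonzero character and general $(\z,\jjp)$ on one side, and a superalgebra on the other, so neither known case applies. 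Your $\hbar$-rescaling variant does not evade this: it still requires identifying the $\hbar\to 0$ limit of the quantum Gaudin algebra with the classical one, which is the same conjecture. There is also a secondary gap: even granting filtered compatibility, one only gets $\operatorname{gr}\vpd\big(\operatorname{gr}\Adzw\big)\subseteq\operatorname{gr}\big(\vpd(\Adzw)\big)$ in general (degree can drop under $\vpd$), so the middle equality in your displayed chain needs justification as well.

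The paper avoids all of this by proving the classical statement directly, not by degeneration from the quantum one. It forms the block matrix $\fL$ over the supercommutative Poisson superalgebra $\oD\blb z^{-1},w^{-1}\brb$ built from $\ovpd$, $\ovps$ and the Jordan blocks, and computes its Berezinian in two ways via \propref{decomp}, \propref{decomp-2} and \propref{CFR} --- exactly the argument of \thmref{cdet-Ber}, but now in a commutative setting where it is simpler (no quasideterminant subtleties, and $\det(\rLdzw^t)=\det(\rLdzw)$ replaces the role of \propref{L'}). This yields the identity
\[
\prod_{i=1}^{\el}(z-z_i)^{[\jp_i]}\cdot\ovpd\big(\det(\rLdzw)\big)=\prod_{a=1}^{\dd}(w-w_a)^{\bjp_a}\cdot\ovps\big(\Bers(\rLswz)\big),
\]
from which the equality of the two classical Gaudin algebras follows immediately from their definitions as coefficient algebras. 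If you want a correct proof along these lines, redo the matrix computation classically rather than trying to take $\operatorname{gr}$ of the quantum duality.
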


\thmref{main2} recovers the duality of $(\gld, \gl_m)$ for classical Gaudin models due to Vicedo and Young \cite[Theorem 3.2]{VY} by taking $p=q=n=0$ and the fermionic counterpart \cite[Theorem 3.4]{VY} by taking $p=q=m=0$.

We organize the paper as follows.
In \secref{Pre}, we review the background materials needed in this paper.
In \secref{GM}, we discuss Gaudin models with irregular singularities and their fundamental properties.
In \secref{duality}, we introduce a joint action of the Gaudin algebras $\Adzw$ and $\Aswz$ on the Fock space $\F$.
We prove \thmref{main1}, establishing a duality of $(\gld, \gl_\pqmn)$ for Gaudin models. Specializing to $\jjp=(1^\el)$, we give an application to a class of modules over $\glssw$ and obtain \thmref{Gaudin-app}.
In \secref{classical}, we introduce the classical Gaudin algebras $\Cdzw$ and $\Cswz$. We prove \thmref{main2} and obtain a duality of $(\gld, \gl_\pqmn)$ for classical Gaudin models.

\vskip 0.3cm
\noindent{\bf Notations.}
Throughout the paper, the symbol $\Z$ (resp., $\N$ and $\Zp$) stands for the set of all (resp., positive and non-negative) integers, the symbol $\C$ for the field of complex numbers, and the symbol $\Z_2:=\{\bz, \bo\}$ for the field of integers modulo 2. All vector spaces, algebras, tensor products, etc., are over $\C$.
{\bf We fix $d \in \N$ and $p,q,m, n \in \Zp$.}

\bigskip

\section{Preliminaries} \label{Pre}

In this section, we review general linear Lie (super)algebras, column determinants, Berezinians, and pseudo-differential operators.

\subsection{The general linear Lie (super)algebra} \label{gl}

For $p, q, m, n\in \Zp$ that are not all zero, let
$$
\I=\{i \in \N \, | \, 1 \le i \le p+q+m+n\}.
$$
For $i \in \I$, define $|i| \in \Z_2$ by
$$
|i|=\begin{cases}
\, \bz & \  \  \mbox{if} \ \  i \in \{1, \ldots, p \} \cup \{ p+q+1, \ldots, p+q+m\};\\
\, \bo & \  \  \mbox{otherwise}.
\end{cases}
$$
Let $\{e_i \, | \, i \in \I\}$ be a basis for the superspace $\C^{p|q}\oplus\C^{m|n}$ such that $\{e_i \, | \, 1\le i \le p+q\}$ and
$\{e_{p+q+i} \, | \, 1\le i \le m+n\}$ are respectively the standard homogeneous bases for $\C^{p|q}$ and $\C^{m|n}$. In other words, the parity of $e_i$ is given by
$|e_i|=|i|$ for $i \in \I$.

For any $i, j \in \I$, let $E^i_j$ denote the $\C$-linear endomorphism on $\C^{p|q}\oplus\C^{m|n}$ defined by
$$
E^i_j (e_k)=\delta_{j, k} e_i \quad \mbox{ for $k \in \I$,}
$$
where $\delta$ is the Kronecker delta. The parity of $E^i_j$ is given by $|E^i_j|=|i|+|j|$.
The superspace of $\C$-linear endomorphisms on $\C^{p|q}\oplus\C^{m|n}$ is a Lie superalgebra, called a \emph{general linear Lie (super)algebra} and denoted by $\gl_\pqmn$, with commutation relations given by
$$
[E^i_j, E^k_l]=\de_{j,k} E^i_l-(-1)^{(|i|+|j|)(|k|+|l|)}\de_{i, l} E^k_j \qquad \mbox{for $i, j, k, l \in \I$.}
$$
Note that $\{ E^i_j \, | \, i, j \in \I \}$ is a homogeneous basis for $\gl_\pqmn$.

For the rest of the paper, we use the symbol $\ds:=\pqmn$.
Write
$$
\gls=\gl_\pqmn.
$$
Let $\mf{b}_{\ds}=\bigoplus_{\substack{ i,j \in \I, i \le j} }  \C E^i_j$ be a Borel subalgebra of $\gls$.
The corresponding Cartan subalgebra $\fh_{\ds}$ has a basis $\{ E^i_i \, | \, i \in \I \}$, and the dual basis in $\fh_{\ds}^{*}$ is denoted by $\{ \ep_i \, | \, i \in \I \}$, where the parity of $\ep_i$ is given by $|\ep_i|=|i|$.
Note that the Borel subalgebra $\mf{b}_{\ds}$ is not the standard one unless some of $p, q, m, n$ are set to 0.

For $m=d$ and $p=q=n=0$,
we write
$$
\gld=\gl_{d|0}
$$
and
$$
\hspace{1cm}  e_{ij}=E^i_j \qquad \mbox{for $i, j=1, \ldots, d$.}
$$

\subsection{Column determinants and Berezinians} \label{Ber}

Let $\cA$ be an associative unital superalgebra over $\C$.
The parity of a homogeneous element $a \in \cA$ is denoted by $|a|$, which lies in $\Z_2$.
The superalgebra $\cA$ is naturally a Lie superalgebra with supercommutator
\begin{equation} \label{s-comm}
[a, b]:=ab - (-1)^{|a||b|} ba
\end{equation}
for homogeneous elements $a, b \in \cA$.

Fix $k \in \N$. For any $k \times k$ matrix $A=\big[a_{i,j}\big]_{i,j=1,\ldots,k}$ over $\cA$, the \emph{column determinant} of $A$ is defined to be
$$
\cdet (A)=\sum_{\si\in\mf{S}_k} (-1)^{l(\si)} \,  a_{\si(1),1}\ldots a_{\si(k),k}.
$$
Here $\fS_k$ denotes the symmetric group on $\{1, \ldots, k\}$, and $l(\si)$ denotes the length of $\si$.
The \emph{row determinant} of $A$ is defined to be
$$
\rdet (A)=\sum_{\si\in\mf{S}_k} (-1)^{l(\si)} \,  a_{1, \si(1)}\ldots a_{k, \si(k)}.
$$
Evidently, $\rdet(A)=\cdet(A^t)$, where $A^t$ is the transpose of $A$.
If all the entries of $A$ commute, then the column determinant and the row determinant of $A$ coincide, and we define the \emph{determinant} of $A$ to be 
$$
\det(A)=\cdet(A)=\rdet(A).
$$

Assume that $A$ has a two-sided inverse $A^{-1}=\big[\wt{a}_{i,j}\big]$.
For $i, j=1, \ldots, k$, the \emph{$(i,j)$th quasideterminant} of $A$ is defined to be $|A|_{ij}:=\wt{a}_{j,i}^{-1}$ provided that $\wt{a}_{j,i}$ has an inverse in $\cA$.
If $k \ge 2$, then
$$
|A|_{ij}=a_{i,j}-r_i^j (A^{ij})^{-1} c_j^i,
$$
where $A^{ij}$ is the $(k-1) \times (k-1)$ submatrix of $A$ obtained by deleting the $i$th row and the $j$th column of $A$ and is assumed to be invertible, $r_i^j$ is the $i$th row of $A$ with $a_{i,j}$ removed, and $c_j^i$ is the $j$th column of $A$ with $a_{i,j}$ removed (see \cite[Proposition 1.2.6]{GGRW}).
Following \cite{GGRW}, it is convenient to write
$$
|A|_{ij}=
\begin{vmatrix}
 a_{1,1} 	& \ldots & a_{1,j} &\ldots & a_{1,k}\\
 \ldots	& \ldots & \ldots  &\ldots & \ldots	\\
 a_{i,1}& \ldots & \fbox{$a_{i,j}$} &\ldots &a_{i,k}\\
 \ldots	& \ldots & \ldots  &\ldots & \ldots	\\
 a_{k,1}&\ldots &a_{k,j} &\ldots &a_{k,k}
\end{vmatrix}.
$$
For $i=1,\ldots,k$, we define
$$
d_i(A)=\begin{vmatrix}
	 a_{1,1} 	& \ldots & a_{1,i}\\
	 \ldots	& \ldots & \ldots \\
	 a_{i,1}& \ldots & \fbox{$a_{i,i}$}
\end{vmatrix},\
$$
called the \emph{principal quasiminors} of $A$.

Let $A=\big[a_{i,j}\big]_{i,j=1,\ldots,k}$ be a $k \times k$ matrix over $\cA$.
For any nonempty subset $P=\{i_1<\ldots<i_\ell \}$ of $\{1,\ldots,k\}$, the matrix $A_{P}:=\big[a_{i,j}\big]_{i,j \in P}$ is called a \emph{standard submatrix} of $A$.
Moreover, we say that $A$ is \emph{sufficiently invertible} if every principal quasiminor of $A$ is well defined, and that $A$ is \emph{amply invertible} if each of its standard submatrices is sufficiently invertible.

Let $(s_1,\ldots,s_{m+n})$ be a sequence of 0's and 1's such that exactly $m$ of the $s_i$'s are 0 and the others are 1.
We call such a sequence a \emph{$0^m1^n$-sequence}.
Every $0^m1^n$-sequence can be written in the form
$(0^{m_1}, 1^{n_1}, \ldots, 0^{m_r}, 1^{n_r}),$
where the sequence begins with $m_1$ copies of $0$'s, followed by $n_1$ copies of $1$'s, and so on.
The set of all $0^m1^n$-sequences is denoted by $\cS_{\mn}$.

Let $\s=(s_1,\ldots,s_{m+n}) \in \cS_{\mn}$.
For any $\si \in \fS_{m+n}$ and any $(m+n)\times (m+n)$ matrix $A:=[a_{i,j}]$ over $\cA$, we define $\s^\si=\left(s_{\si^{-1}(1)},s_{\si^{-1}(2)},\ldots,s_{\si^{-1}(m+n)} \rb$ and $A^\si=\big[a_{\si^{-1}(i),\si^{-1}(j)} \big]$.
We say that $A$ is of type $\s$ if $a_{i,j}$ is a homogeneous element of parity $|a_{i,j}|=\bs_i+\bs_j$ for $1\le i, j \le m+n$.

Following the definition of \cite{HM}, for any $(m+n)\times (m+n)$ sufficiently invertible matrix $A$ of type $\s$ over $\cA$, the \emph{Berezinian of type $\s$} of $A$ is defined to be
$$
\Bers (A)=d_1(A)^{\hs_1}\ldots d_{m+n}(A)^{\hs_{m+n}},
$$
where $\hs_i:=(-1)^{s_i}$ for $1 \le i \le m+n$.
The original formulation of Berezinians, in the case where $\cA$ is supercommutative, was given in \cite{Ber}.

\begin{prop} [{cf. \cite[Proposition 3.5]{HM}}] \label{decomp}
Let $A$ be an $(m+n)\times (m+n)$ amply invertible matrix of type $\s$ over $\cA$.
Fix $k\in \{1,\ldots,m+n-1\}$. We write
\begin{equation} \label{block}
A=\begin{bmatrix} W & X\\ Y&Z \end{bmatrix},
\end{equation}
where $W,X,Y,Z$ are respectively $k\times k$, $k\times(m+n-k)$, $(m+n-k)\times k$, and $(m+n-k)\times (m+n-k)$ matrices.
Then $W$ and $Z-YW^{-1}X$ are sufficiently invertible matrices of types $\s^\prime:=(s_1,\ldots,s_k)$ and $\s^\dpr:=(s_{k+1},\ldots,s_{m+n})$, respectively. Moreover,
$$
\Bers (A)={\rm Ber}^{\s^\prime} (W) \cdot {\rm Ber}^{\s^\dpr} \big(Z-YW^{-1}X \big).
$$
\end{prop}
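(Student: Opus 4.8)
The plan is to prove the two assertions — that $W$ and the Schur complement $S := Z - YW^{-1}X$ are sufficiently invertible of the stated types, and that the Berezinian factors — essentially by induction on $m+n$, reducing the general block-splitting at position $k$ to the successive splitting-off of one row and column at a time. First I would record the parity bookkeeping: since $A$ has type $\s = (s_1,\dots,s_{m+n})$, the submatrix $W$ manifestly has type $\s' = (s_1,\dots,s_k)$, and a short computation with the Poisson-/matrix-entry parities shows that each entry of $YW^{-1}X$, hence of $S$, has parity $\bar s_i + \bar s_j$ for $k+1 \le i,j \le m+n$, so $S$ has type $\s''$. The ``sufficiently invertible'' claims should be extracted from the ample invertibility of $A$: every principal quasiminor $d_i(W)$ for $i \le k$ is a principal quasiminor of $A$, and the principal quasiminors of $S$ can be identified, via the standard quasideterminant identities (the ``heredity'' and ``homological'' relations in \cite[Ch.~1]{GGRW}), with quasideterminants built from standard submatrices of $A$ containing the first $k$ indices; ample invertibility of $A$ guarantees all of these are well defined.

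The heart of the matter is the Berezinian identity. Since $\Bers(A) = d_1(A)^{\hat s_1}\cdots d_{m+n}(A)^{\hat s_{m+n}}$ is a product over the principal quasiminors, and likewise for ${\rm Ber}^{\s'}(W)$ and ${\rm Ber}^{\s''}(S)$, it suffices to show that for $i \le k$ one has $d_i(A) = d_i(W)$ — which is immediate, as these quasiminors only involve the top-left $i\times i$ corner — and that for $k+1 \le i \le m+n$ one has
$$
d_i(A) = d_{i-k}(S).
$$
This last equality is exactly the quasideterminant analogue of the classical Schur identity $\det\!\begin{bmatrix} W & X \\ Y & Z\end{bmatrix} = \det(W)\det(Z - YW^{-1}X)$, applied to each leading principal block. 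Concretely, writing $A_{[i]}$ for the leading $i\times i$ submatrix and splitting it at $k$, the quasideterminant $d_i(A) = |A_{[i]}|_{ii}$ is computed by the formula $|A|_{ij} = a_{i,j} - r_i^j (A^{ij})^{-1} c_j^i$; iterating the elimination of the first $k$ rows/columns turns $A_{[i]}$ into a block matrix whose lower-right $(i-k)\times(i-k)$ block is precisely the leading submatrix of $S$, and the quasideterminant in the $(i,i)$ slot is unchanged under this block Gaussian elimination. I would phrase this cleanly by invoking the known multiplicativity of quasiminors under such triangular reductions (again from \cite{GGRW}), so that $d_i(A) = d_{i-k}(Z - YW^{-1}X)$ with the inverse $W^{-1}$ appearing exactly because the elimination of the first $k$ variables produces $W^{-1}$ as the relevant corner inverse.

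Assembling these: for the exponents, note $\hat s_i = (-1)^{s_i}$ matches between $A$ and $W$ for $i \le k$, and between $A$ and $S$ for $i > k$ (with the index shift $i \leftrightarrow i-k$), so
$$
\Bers(A) = \prod_{i=1}^{k} d_i(W)^{\hat s_i} \cdot \prod_{i=k+1}^{m+n} d_{i-k}(S)^{\hat s_i} = {\rm Ber}^{\s'}(W)\cdot {\rm Ber}^{\s''}(S),
$$
which is the claim. The main obstacle I anticipate is the noncommutative Schur-complement step: over a general superalgebra $\cA$ the entries of $W$, $X$, $Y$, $Z$ need not commute, so the classical determinant manipulation has to be replaced throughout by quasideterminant identities, and one must be careful that ample invertibility of $A$ (not merely sufficient invertibility) is genuinely used to license every intermediate inverse that appears when peeling off one index at a time. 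Once that bookkeeping is set up — and it is essentially the content of \cite[Proposition 3.5]{HM}, of which this is a restatement — the rest is formal.
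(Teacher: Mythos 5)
The paper offers no proof of this proposition at all: it is quoted verbatim (up to notation) from \cite[Proposition~3.5]{HM}, so there is nothing internal to compare against. Your outline is a correct reconstruction of the standard argument behind that result: the crux is exactly the quasideterminant heredity identity $d_{k+j}(A)=d_j\big(Z-YW^{-1}X\big)$ from \cite{GGRW}, combined with the trivial equalities $d_i(A)=d_i(W)$ for $i\le k$, and the ordered product defining $\Bers(A)$ then splits as ${\rm Ber}^{\s'}(W)\cdot{\rm Ber}^{\s''}(Z-YW^{-1}X)$ with the exponents $\hat s_i$ matching under the index shift, just as you say.
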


An $(m+n)\times (m+n)$ matrix $A=\big[a_{i,j}\big]$ over $\cA$ is called a \emph{Manin matrix} of type $\s$ if $A$ is a matrix of type $\s$ satisfying
$$
[a_{i,j}, a_{k,l}]=(-1)^{s_i s_j+s_i s_k+s_j s_k}[a_{k,j}, a_{i, l}]
$$
for $1 \le i,j,k,l \le m+n$.
Evidently, if $\cA$ is supercommutative, then any $(m+n)\times (m+n)$ matrix of type $\s$ is automatically a Manin matrix.

Let us recall some useful facts about Manin matrices.

\begin{prop}[{cf. \cite[Section 3]{HM}}] \label{basics}

Let $A$ be an $(m+n)\times (m+n)$ Manin matrix of type $\s$ over $\cA$. Then

\begin{enumerate}[\normalfont(i)]

\item If $P =\{i_1<\ldots<i_\ell \}$ is a nonempty subset of $\{1,\ldots,m+n\}$, then the standard submatrix $A_{P}$ of $A$ is a Manin matrix of type ${\s}_P:=(s_{i_1},\ldots s_{i_\ell})$.

\item For any $\si \in \fS_{m+n}$, $A^\si$ is a Manin matrix of type $\s^\si$.

\end{enumerate}

\end{prop}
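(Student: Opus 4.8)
The plan is to derive both statements directly from the definition of a Manin matrix of type $\s$, exploiting that the two defining conditions --- the parity requirement $|a_{i,j}|=\bs_i+\bs_j$ and the quadratic relation $[a_{i,j},a_{k,l}]=(-1)^{s_i s_j+s_i s_k+s_j s_k}[a_{k,j},a_{i,l}]$ --- are each attached to an individual tuple of row and column labels. The key observation is that keeping only a subset of the labels, or relabeling the labels through a bijection, turns such a family of conditions into a subfamily (respectively, a reindexing) of the original family; moreover the supercommutator $[a,b]=ab-(-1)^{|a||b|}ba$ depends on $a$ and $b$ only through their parities, so in either construction the supercommutators that appear are literally the same elements of $\cA$, merely sitting at relabeled positions. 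Hence nothing needs to be computed beyond matching indices and checking that the symmetric three-term sign exponent transforms correctly.

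For (i), write $P=\{i_1<\ldots<i_\ell\}$, so the $(u,v)$-entry of $A_P$ is $a_{i_u,i_v}$ for $1\le u,v\le\ell$. Its parity $\bs_{i_u}+\bs_{i_v}$ is exactly the one prescribed by $\s_P$ in position $(u,v)$, since $(\s_P)_u=s_{i_u}$; thus $A_P$ is of type $\s_P$. For $1\le u,v,x,y\le\ell$ the relation demanded of $A_P$ is $[a_{i_u,i_v},a_{i_x,i_y}]=(-1)^{s_{i_u}s_{i_v}+s_{i_u}s_{i_x}+s_{i_v}s_{i_x}}[a_{i_x,i_v},a_{i_u,i_y}]$, which is precisely the defining relation of $A$ evaluated at the quadruple $(i_u,i_v,i_x,i_y)\in\{1,\ldots,m+n\}^4$. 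So $A_P$ is a Manin matrix of type $\s_P$.

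For (ii), fix $\si\in\fS_{m+n}$. The $(i,j)$-entry of $A^\si$ is $a_{\si^{-1}(i),\si^{-1}(j)}$, whose parity $\bs_{\si^{-1}(i)}+\bs_{\si^{-1}(j)}$ is the one prescribed by $\s^\si$ in position $(i,j)$ because $(\s^\si)_i=s_{\si^{-1}(i)}$; also $\s^\si$ is again a $0^m1^n$-sequence. For the relations, fix $1\le i,j,k,l\le m+n$ and set $i'=\si^{-1}(i)$, $j'=\si^{-1}(j)$, $k'=\si^{-1}(k)$, $l'=\si^{-1}(l)$; the relation demanded of $A^\si$ reads $[a_{i',j'},a_{k',l'}]=(-1)^{s_{i'}s_{j'}+s_{i'}s_{k'}+s_{j'}s_{k'}}[a_{k',j'},a_{i',l'}]$, i.e.\ the defining relation of $A$ at the quadruple $(i',j',k',l')$. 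Since $\si$ is a bijection, as $(i,j,k,l)$ runs over $\{1,\ldots,m+n\}^4$ the quadruple $(i',j',k',l')$ runs over the same set, so all required relations hold and $A^\si$ is a Manin matrix of type $\s^\si$.

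I do not expect a genuine obstacle here: the only point needing care is the bookkeeping of the symmetric three-term exponent under relabeling, together with the immediate fact that parities --- and hence supercommutators --- are unchanged. Should one prefer, (ii) could instead be reduced to the case of adjacent transpositions, which generate $\fS_{m+n}$, but the direct argument above disposes of all of $\fS_{m+n}$ at once.
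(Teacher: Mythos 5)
Your proof is correct: both parts follow by exactly the index-matching you describe, since the parity condition and the quadratic Manin relation are each indexed by tuples of row/column labels, and restricting to $P$ (resp.\ relabeling by $\si^{-1}$) just selects a subfamily (resp.\ reindexes the full family) of the relations already assumed for $A$. The paper gives no proof of its own, citing \cite[Section 3]{HM}, and your direct verification is the standard argument one would find there.
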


\begin{prop} [{\cite[Proposition 3.6]{HM}}] \label{HM}

Let $A$ be an $(m+n)\times (m+n)$ amply invertible Manin matrix of type $\s$ over $\cA$.
Then ${\rm Ber}^{\s^\si} (A^\si)=\Bers (A)$ for any $\si \in \fS_{m+n}$.
\end{prop}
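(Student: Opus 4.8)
The plan is to use the multiplicativity of the Berezinian (\propref{decomp}) to reduce the identity first to the case of an adjacent transposition and then to an elementary $2\times 2$ computation. To this end, observe first that the set of permutations $\si$ for which ${\rm Ber}^{\s^{\si}}(A^{\si})=\Bers(A)$ holds for \emph{every} amply invertible Manin matrix $A$ of \emph{every} type $\s$ is closed under composition: if it contains $\si$ and $\si'$, then by \propref{basics}(ii) the matrix $A^{\si'}$ is a Manin matrix of type $\s^{\si'}$, again amply invertible (ample invertibility being preserved by permutations, as it amounts to invertibility of all standard submatrices), so applying the identity for $\si$ to $A^{\si'}$ and then the identity for $\si'$ to $A$ --- using $(A^{\si'})^{\si}=A^{\si\si'}$ and $(\s^{\si'})^{\si}=\s^{\si\si'}$ --- yields it for $\si\si'$. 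Since the adjacent transpositions $\si_k=(k,\,k+1)$, $1\le k\le m+n-1$, generate $\fS_{m+n}$, it suffices to treat each $\si_k$.

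Fix $k$ and apply \propref{decomp} with the first block of size $k-1$ (an empty step when $k=1$), writing $A=\begin{bmatrix}W&X\\Y&Z\end{bmatrix}$ with $W$ of size $k-1$, so $\Bers(A)={\rm Ber}^{\s'}(W)\cdot{\rm Ber}^{\s''}(S)$ with $S:=Z-YW^{-1}X$. As $\si_k$ fixes $1,\dots,k-1$ and interchanges $k$ and $k+1$, one has $W^{\si_k}=W$, while inserting the relevant permutation matrix on either side of $W^{-1}$ shows the Schur complement of $W$ in $A^{\si_k}$ to be $S^{\tau}$, where $\tau$ transposes the first two indices of $S$. Hence, by \propref{decomp} applied to $A$ and to $A^{\si_k}$, the claim for $\si_k$ becomes ${\rm Ber}^{(\s'')^{\tau}}(S^{\tau})={\rm Ber}^{\s''}(S)$; here $S$ is amply invertible --- not just sufficiently invertible --- as one sees by applying \propref{decomp} to the standard submatrices $A_{\{1,\dots,k-1\}\cup P}$ of $A$. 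A further application of \propref{decomp} to $S$, now with the first block of size $2$, and the same remark that the Schur complement of this $2\times 2$ corner is $\tau$-invariant, reduce everything to the $2\times 2$ case: for every amply invertible $2\times 2$ Manin matrix $B=\begin{bmatrix}b_{11}&b_{12}\\b_{21}&b_{22}\end{bmatrix}$ of type $(s_1,s_2)$ one must show, with $\tau=(1\,2)$,
$$
b_{11}^{\hs_1}\bigl(b_{22}-b_{21}b_{11}^{-1}b_{12}\bigr)^{\hs_2}=b_{22}^{\hs_2}\bigl(b_{11}-b_{12}b_{22}^{-1}b_{21}\bigr)^{\hs_1}.
$$
This step uses the fact --- standard in the theory of Manin matrices, see \cite{HM} --- that a Schur complement of a Manin matrix is again a Manin matrix, so that $B$ above really is a Manin matrix of type $(s_1,s_2)$.

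To prove the $2\times 2$ identity: when $s_1=s_2$ the two sides carry equal exponents, and clearing the inverses with the commutation relations among the entries of a $2\times 2$ Manin matrix of type $(s_1,s_1)$ collapses it to $[b_{11},b_{22}]=(-1)^{s_1}[b_{21},b_{12}]$, which is precisely the defining Manin relation on the first two rows and columns. When $s_1\ne s_2$, say $(s_1,s_2)=(0,1)$ (the case $(1,0)$ following by applying the previous case to $B^{\tau}$), the entries $b_{11},b_{22}$ are even and $b_{12},b_{21}$ odd, and the Manin relations yield $b_{11}b_{21}=b_{21}b_{11}$, $b_{22}b_{21}=b_{21}b_{22}$, $b_{21}^{2}=0$, and $b_{11}b_{22}-b_{22}b_{11}=b_{21}b_{12}+b_{12}b_{21}$. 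Multiplying the identity on the left by $b_{22}$ and on the right by $b_{22}-b_{21}b_{11}^{-1}b_{12}$ recasts it as $b_{22}b_{11}=\bigl(b_{11}-b_{12}b_{22}^{-1}b_{21}\bigr)\bigl(b_{22}-b_{21}b_{11}^{-1}b_{12}\bigr)$; pushing each occurrence of $b_{21}$ to the left (it commutes with $b_{11}$, with $b_{22}$, and with their inverses), expanding, discarding the term that contains $b_{21}b_{22}^{-1}b_{11}^{-1}b_{21}=b_{22}^{-1}b_{11}^{-1}b_{21}^{2}=0$, and applying the last relation then produces $b_{22}b_{11}$ on the nose.

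The real content is the mixed-parity case ($s_1\ne s_2$) of the last step --- the ``boson--fermion exchange'' --- the only place the super structure genuinely intervenes; the case $s_1=s_2$ is just the defining Manin relation. Everything else --- closure under composition, and the stability of the Manin property and of ample invertibility under permutations and under passage to Schur complements --- is routine, and may be read off from \propref{decomp}, \propref{basics}, and the discussion preceding them.
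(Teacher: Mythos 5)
The paper does not prove this proposition: it is quoted verbatim from Huang and Mukhin \cite{HM}, so there is no in-paper argument to compare yours with. Your proof is correct and follows what is essentially the standard (and, as far as I can tell, \cite{HM}'s own) route: reduce to adjacent transpositions, peel off the fixed top-left block via \propref{decomp}, note that forming Schur complements commutes with the permutation, and verify the resulting $2\times 2$ identity from the Manin relations; your mixed-parity computation, which is the only genuinely ``super'' step, checks out. The one point you pass over too quickly is the preservation of ample invertibility under permutations: a principal quasiminor of a standard submatrix of $A^{\si}$ is a quasideterminant of a standard submatrix of $A$ taken at a non-corner position, so its existence is not literally contained in the definition of ample invertibility of $A$ --- indeed, the well-definedness of ${\rm Ber}^{\s^{\si}}(A^{\si})$ is part of what the proposition asserts, and the same issue recurs when you apply \propref{decomp} to $A^{\si_k}$ and to $S^{\tau}$. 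This is the usual technical caveat in this subject (handled in \cite{HM} by the hypotheses under which the matrices arise) and does not affect the substance of your argument.
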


\begin{prop} [{\cite[Proposition 4.4]{ChL25-2}}]  \label{decomp-2}
Let $A$ be an $(m+n)\times (m+n)$ amply invertible Manin matrix of type $\s$ over $\cA$. Write $A$ as in \eqref{block}.
Then $W-XZ^{-1} Y$ and $Z$ are sufficiently invertible matrices of types $\s^\prime:=(s_1,\ldots,s_k)$ and $\s^\dpr:=(s_{k+1},\ldots,s_{m+n})$, respectively.
Moreover,
$$
\Bers (A)={\rm Ber}^{\s^\dpr} (Z)  \cdot {\rm Ber}^{\s^\prime} \big(W-XZ^{-1} Y \big).
$$
\end{prop}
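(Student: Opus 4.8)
The plan is to reduce this statement to \propref{decomp} by passing to the ``reversed'' matrix, exploiting the symmetry of the Berezinian under permutations (\propref{HM}). Concretely, let $w_0 \in \fS_{m+n}$ be the longest element, i.e. $w_0(i)=m+n+1-i$, and set $B:=A^{w_0}$, the matrix obtained from $A$ by reversing the order of both the rows and the columns. By \propref{basics}(ii), $B$ is an amply invertible Manin matrix of type $\s^{w_0}=(s_{m+n},\ldots,s_1)$; ample invertibility is preserved because $w_0$ merely permutes the standard submatrices of $A$ among themselves together with a relabelling. Writing $B$ in block form with blocks of sizes $(m+n-k)$ and $k$ (so that the lower-right $k \times k$ block of $B$ corresponds to the upper-left $k\times k$ block $W$ of $A$, up to the reversal $w_0$), the Schur complement $W-XZ^{-1}Y$ of $A$ becomes, after reversal, the Schur complement of the form appearing in \propref{decomp} applied to $B$.

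First I would set up the block correspondence carefully. Partitioning $w_0$-reversal coordinates, the block $\begin{bmatrix} W & X\\ Y & Z\end{bmatrix}$ of $A$ turns into $\begin{bmatrix} Z' & Y'\\ X' & W'\end{bmatrix}$ for $B$, where primes denote the appropriate row-and-column reversals; in particular $W'=W^{w_0^{(k)}}$ with $w_0^{(k)}$ the longest element of $\fS_k$, and similarly for the others. Then $W-XZ^{-1}Y$ reversed equals $W'-X'Z'^{-1}Y'$, which is exactly the quantity $Z-YW^{-1}X$ of \propref{decomp} with the roles of the two blocks interchanged — that is, it is the lower Schur complement of $B$ with respect to its \emph{leading} $(m+n-k)\times(m+n-k)$ block $Z'$. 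Next I would invoke \propref{decomp} for the matrix $B$ with its partition at $m+n-k$: this yields that $Z'$ and $W'-X'Z'^{-1}Y'$ are sufficiently invertible of types $(s_{m+n},\ldots,s_{k+1})$ and $(s_k,\ldots,s_1)$ respectively, and
$$
{\rm Ber}^{\s^{w_0}}(B)={\rm Ber}^{(s_{m+n},\ldots,s_{k+1})}(Z') \cdot {\rm Ber}^{(s_k,\ldots,s_1)}\big(W'-X'Z'^{-1}Y'\big).
$$
Now I would translate each factor back: by \propref{HM} applied to $Z$ (a Manin matrix by \propref{basics}(i)) and the reversal permutation of $\fS_{m+n-k}$, we get ${\rm Ber}^{(s_{m+n},\ldots,s_{k+1})}(Z')={\rm Ber}^{\s^\dpr}(Z)$; similarly ${\rm Ber}^{(s_k,\ldots,s_1)}(W'-X'Z'^{-1}Y')={\rm Ber}^{\s^\prime}(W-XZ^{-1}Y)$, using that $W-XZ^{-1}Y$ is a Manin matrix (this follows since it is the Schur complement of a Manin matrix, e.g.\ by the same reversal trick applied to \propref{decomp}, where $Z-YW^{-1}X$ is already known to be sufficiently invertible of the right type, and its Manin property is standard). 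Finally ${\rm Ber}^{\s^{w_0}}(B)=\Bers(A)$ by \propref{HM}, giving the claimed identity.

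The main obstacle I anticipate is purely bookkeeping: getting the block reversal and the induced type-sequence reversals exactly right, and confirming that ample invertibility of $A$ really does pass to $B$ and then to the relevant Schur complement in the form \propref{decomp} demands. There is also a subtle point in that \propref{decomp} asserts sufficient invertibility of $Z-YW^{-1}X$ but I need the Manin property of the Schur complement to apply \propref{HM} to it; I expect this to be either immediate from the cited results in \cite{HM} or provable by the same reversal argument bootstrapped from \propref{decomp}. Apart from that, the argument is a formal consequence of \propref{decomp}, \propref{basics}, and \propref{HM}, with no genuinely new computation.
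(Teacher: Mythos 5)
First, a point of comparison: the paper does not prove Proposition~\ref{decomp-2} at all --- it is imported verbatim from \cite[Proposition 4.4]{ChL25-2} --- so there is no in-paper argument to measure you against. Your strategy (conjugate by the longest element $w_0$, apply \propref{decomp} to $A^{w_0}$ with the split at $m+n-k$, then undo the reversal on each factor with \propref{HM}) is the natural derivation and is very likely how the cited source proceeds; the block bookkeeping you describe, including the identity $W^{r}-X^{r}(Z^{r})^{-1}Y^{r}=(W-XZ^{-1}Y)^{r}$, is correct.

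Two steps, however, are not closed by the propositions you are allowed to quote. (1) Your justification that $B=A^{w_0}$ is amply invertible is wrong as stated: a standard submatrix $B_P$ is not a relabelled standard submatrix of $A$ but the \emph{reversal} of $A_{w_0(P)}$, and sufficient invertibility is defined through \emph{leading} principal quasiminors, so it is not manifestly stable under reversing the index order --- ample invertibility of $A$ amounts to well-definedness of $|A_Q|_{\max Q,\max Q}$ for all subsets $Q$, whereas ample invertibility of $B$ amounts to well-definedness of $|A_Q|_{\min Q,\min Q}$. You must either observe that the statement of \propref{HM} already presupposes (hence implicitly asserts) sufficient invertibility of every $A^\si$, and apply that to each standard submatrix $A_{w_0(P)}$ (amply invertible Manin by \propref{basics}(i)), or import the corresponding lemma from \cite{HM}. (2) To convert ${\rm Ber}^{(s_k,\ldots,s_1)}\big((W-XZ^{-1}Y)^{r}\big)$ into ${\rm Ber}^{\s^\prime}(W-XZ^{-1}Y)$ you apply \propref{HM} to the Schur complement, which requires $W-XZ^{-1}Y$ to be an \emph{amply invertible Manin} matrix; \propref{decomp} only delivers sufficient invertibility of its reversal, and neither the Manin property nor the ample invertibility of Schur complements appears among the quoted results. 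These facts are standard in the Manin-matrix literature, but as written your sketch of why the Schur complement is Manin is circular (it appeals to the very reversal argument being justified). With those two lemmas supplied explicitly, your argument does go through.
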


\begin{prop}[{\cite[Lemma 8]{CFR}}] \label{CFR}
Let $\s_0=(0^m)$, and let $A$ be an $m\times m$ sufficiently invertible Manin matrix of type $\s_0$ over $\cA$. Then $\Ber^{\s_0} (A)=\cdet  (A)$.
\end{prop}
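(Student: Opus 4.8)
\emph{Proof strategy.} The plan is to unwind the definition of the Berezinian and then prove the claim by induction on $m$, reducing it to the multiplicativity of the column determinant under a one-step Schur-complement reduction for Manin matrices. Since $\s_0=(0^m)$, every $\hs_i=(-1)^{0}=1$, so
$$
\Ber^{\s_0}(A)=d_1(A)\,d_2(A)\cdots d_m(A)
$$
is just the product of the principal quasiminors of $A$; hence the assertion is equivalent to $\cdet(A)=d_1(A)\cdots d_m(A)$. For $m=1$ both sides equal $a_{1,1}$, which settles the base case.

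For $m\ge 2$, set $W:=A^{mm}=A_{\{1,\ldots,m-1\}}$, the leading $(m-1)\times(m-1)$ submatrix of $A$. Its principal quasiminors are $d_i(W)=d_i(A)$ for $1\le i\le m-1$, so $W$ is again sufficiently invertible (and in particular invertible, as needed to form $|A|_{mm}$), and by \propref{basics}(i) it is a Manin matrix of type $(0^{m-1})$. Since $d_m(A)=|A|_{mm}$ by definition of the principal quasiminor, the induction hypothesis $\Ber^{(0^{m-1})}(W)=\cdet(W)$ yields
$$
\Ber^{\s_0}(A)=\big(d_1(A)\cdots d_{m-1}(A)\big)\,d_m(A)=\cdet(W)\cdot|A|_{mm},
$$
where $|A|_{mm}=a_{m,m}-r\,W^{-1}c$ with $r=(a_{m,1},\ldots,a_{m,m-1})$ and $c=(a_{1,m},\ldots,a_{m-1,m})^{t}$. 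Thus the whole proof comes down to establishing the identity $\cdet(A)=\cdet(W)\cdot|A|_{mm}$.

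To prove this identity I would expand its right-hand side as $\cdet(W)\,a_{m,m}-\cdet(W)\,r\,W^{-1}c$, and group the $m!$ monomials of $\cdet(A)=\sum_{\si\in\fS_m}(-1)^{l(\si)}a_{\si(1),1}\cdots a_{\si(m),m}$ according to the column $j=\si^{-1}(m)$ carrying the entry of the last row. The block $j=m$ is exactly $\cdet(W)\,a_{m,m}$, and to match the blocks with $j<m$ to $-\cdet(W)\,r\,W^{-1}c$ I would substitute the adjugate (cofactor) expression for the entries of $W^{-1}$ and then use the Manin relations $[a_{i,j},a_{k,l}]=[a_{k,j},a_{i,l}]$ — in particular the commuting of entries within a common column — to carry the factors $a_{m,j}$ and $a_{i,m}$ past $\cdet(W)$ into the required positions. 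This identity is precisely the Schur-complement multiplicativity of the column determinant for Manin matrices proved in \cite{CFR}, so an alternative is to quote it directly and let the induction above finish the argument.

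I expect this last identity to be the only genuine obstacle: for an arbitrary (non-Manin) matrix it simply fails, and making the noncommutative reordering precise — keeping track of signs and of the left/right placement of factors in the cofactor expansion of $W^{-1}$ — is where the care is needed. Everything before it (rewriting the Berezinian as a product of quasiminors, peeling off the last row and column, and the inductive descent to $\Ber^{(0^{m-1})}(W)$) is formal, using only the definition of the Berezinian together with \propref{basics}.
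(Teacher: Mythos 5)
The paper offers no proof of this proposition: it is quoted verbatim from \cite[Lemma 8]{CFR}. Your argument is correct and is essentially the argument of that cited source — rewrite $\Ber^{\s_0}(A)$ as the product of principal quasiminors (all $\hs_i=1$), induct on $m$ via $d_m(A)=|A|_{mm}$, and reduce everything to the Schur-complement identity $\cdet(A)=\cdet(W)\cdot|A|_{mm}$ for Manin matrices, which you rightly identify as the only substantive step and which is itself a theorem of \cite{CFR}.
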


\begin{prop} \label{Ber1n}
Suppose that $\cA$ is supercommutative.
Let $\s_1=(1^n)$, and let $A$ be an $n \times n$ sufficiently invertible matrix of type $\s_1$ over $\cA$. Then $\Ber^{\s_1} (A)=[\det  (A)]^{-1}$.
\end{prop}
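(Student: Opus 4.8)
Here the key observation is that, despite the label $\s_1=(1^n)$, supercommutativity forces the entries of $A$ to commute, which collapses the statement to an essentially commutative computation. Indeed, if $A$ is of type $\s_1$, then every entry $a_{i,j}$ is homogeneous of parity $\bo+\bo=\bz$, i.e. $A$ has even entries; since $\cA$ is supercommutative, even elements are central, so all entries of $A$ — and hence all the principal quasiminors $d_i(A)$, being built from entries and entries of inverses of standard submatrices — commute with one another. In particular, the very same matrix $A$ is also a matrix of type $\s_0=(0^n)$, and, recalling that over a supercommutative algebra every matrix of a given type is automatically a Manin matrix, $A$ is amply invertible Manin of type $\s_0$ as well; note also that "sufficiently invertible" is a condition on the principal quasiminors alone and does not depend on the $\s$-label, so the same $d_i(A)$ are well defined and invertible whichever type we assign to $A$.

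Given this, the plan is a two-line chain. Recall $\Ber^{\s_1}(A)=d_1(A)^{\hs_1}\cdots d_n(A)^{\hs_n}$ with $\hs_i=(-1)^{s_i}=-1$ for every $i$, so
$$
\Ber^{\s_1}(A)=d_1(A)^{-1}\cdots d_n(A)^{-1},\qquad \Ber^{\s_0}(A)=d_1(A)\cdots d_n(A).
$$
Since the $d_i(A)$ commute and are invertible, the first product is the two-sided inverse of the second, i.e. $\Ber^{\s_1}(A)=\big(\Ber^{\s_0}(A)\big)^{-1}$. Now \propref{CFR}, applied to the Manin matrix $A$ of type $\s_0=(0^n)$ (with $m$ there taken to be our $n$), gives $\Ber^{\s_0}(A)=\cdet(A)$, and because the entries of $A$ commute, $\cdet(A)=\det(A)$ as defined in \secref{Ber}. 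Combining, $\Ber^{\s_1}(A)=[\det(A)]^{-1}$. (Equivalently, and without invoking \propref{CFR}, one can compute directly from the quasideterminant formula that $d_i(A)=\det(A_{[i]})/\det(A_{[i-1]})$ for the leading $i\times i$ submatrix $A_{[i]}$, with $\det(A_{[0]}):=1$, so that the telescoping product $\prod_i d_i(A)$ equals $\det(A)$.)

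I do not expect a serious obstacle. The only point that needs a word of justification is precisely the compatibility just mentioned: that re-reading a type-$\s_1$ matrix as a type-$\s_0$ matrix changes neither the principal quasiminors $d_i(A)$ nor the meaning of "sufficiently invertible", so that \propref{CFR} (or the direct telescoping identity) legitimately applies; once that is granted, the flip of the exponents $\hs_i$ from $+1$ to $-1$ together with commutativity of the $d_i(A)$ does the rest.
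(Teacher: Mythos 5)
Your proof is correct and follows essentially the same route as the paper: observe that a type-$(1^n)$ matrix over a supercommutative superalgebra has even, hence commuting, entries, apply \propref{CFR} to it viewed as a type-$(0^n)$ Manin matrix to get $\det(A)=d_1(A)\cdots d_n(A)$, and then invert the (commuting) factors to identify $[\det(A)]^{-1}$ with $\Ber^{\s_1}(A)$. The extra care you take in checking that the $d_i(A)$ and the notion of sufficient invertibility do not depend on the type label is a reasonable elaboration of what the paper leaves implicit.
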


\begin{proof}
By hypothesis, $A$ is a matrix of type $(0^n)$ with commuting entries.
By \propref{CFR}, $\det  (A)=d_1(A) \ldots d_n(A)$. It follows that
$[\det  (A)]^{-1}= d_1(A)^{-1}  \ldots d_n(A)^{-1}=\Ber^{\s_1} (A)$
as the elements $d_i(A)$ commute.
\end{proof}

\subsection{Pseudo-differential operators} \label{diff-op}

Let $\cA$ be an associative unital superalgebra over $\C$, and let $z$ be an even variable.
We denote by $\cA [\tns[z]\tns]$ (resp., $\cA \blb z \brb$) the superalgebra of formal power series (resp., formal Laurent series) in $z$ with coefficients in $\cA$.

The superalgebra $\cA \blb z^{-1} \brb$ is equipped with an even derivation $\pz$ defined by
$$
  \pz \lb \sum_{i=-\infty}^r a_i z^i \rb=\sum_{i=-\infty}^r i a_i z^{i-1}, \qquad  r \in \Z \quad \text{and} \quad a_i \in \cA.
$$
Let $\cA \blb z^{-1} \brb \blb \pz^{-1} \brb$ be the superspace of formal Laurent series in $\pz^{-1}$ with coefficients in $\cA \blb z^{-1} \brb$.
The multiplication on $\cA \blb z^{-1} \brb$ extends to a multiplication on $\cA \blb z^{-1} \brb \blb \pz^{-1} \brb$ by the rules
\begin{equation}\label{rule}
\pz \pz^{-1}=\pz^{-1} \pz=1, \quad \pz^{i} \phi=\sum_{k=0}^{\infty}  \binom{i}{k}  \,  \pz^{k}(\phi) \, \pz^{i-k},  \quad \mbox{for $\phi \in \cA \blb z^{-1} \brb$ and $i \in \Z$.}
\end{equation}
Here $\disp{\binom{i}{k}:=\frac{i(i-1)\ldots (i-k+1)}{k!}}$ for $i \in \Z$ and $k \in \Zp$.

\begin{lem}
$\cA \blb z^{-1} \brb \blb \pz^{-1} \brb$ is an associative unital superalgebra over $\C$.
\end{lem}

\begin{proof}
We need only show that the associative law for multiplication holds in $\cA \blb z^{-1} \brb \blb \pz^{-1} \brb$.
To establish the law, it suffices to show that
\begin{equation}\label{ass}
z^{i_1} \pz^{j_1} \big( z^{i_2} \pz^{j_2}  z^{i_3} \pz^{j_3} \big) = \big(z^{i_1} \pz^{j_1} z^{i_2} \pz^{j_2} \big) z^{i_3} \pz^{j_3}
\end{equation}
for $i_r, j_r \in \Z$, $r=1,2,3$. By \eqref{rule},
$$
z^{i_1} \pz^{j_1} \big( z^{i_2} \pz^{j_2}  z^{i_3} \pz^{j_3} \big) = \sum_{k=0}^\infty a_k z^{i_1+i_2+i_3-k} \pz^{j_1+j_2+j_3-k},
$$
where 
$$
a_k=\sum_{r=0}^k \binom{j_1}{r}  \binom{i_2+i_3-k+r}{r}  \binom{j_2}{k-r} \binom{i_3}{k-r} r! (k-r)!,
$$
and
$$
 \big(z^{i_1} \pz^{j_1} z^{i_2} \pz^{j_2} \big) z^{i_3} \pz^{j_3} = \sum_{k=0}^\infty b_k z^{i_1+i_2+i_3-k} \pz^{j_1+j_2+j_3-k},
$$
where
$$
b_k=\sum_{r=0}^k \binom{j_1}{r}  \binom{i_2}{r} \binom{j_1+j_2-r}{k-r} \binom{i_3}{k-r}  r! (k-r)!.
$$
Using Vandermonde's identity:
$$
\binom{i+j}{r}=\sum_{s=0}^r \binom{i}{s}  \binom{j}{r-s},  \qquad  i, j \in \Z \,\, \text{ and } \,\,  r \in \Zp,
$$
we have for $k \in \Zp$,
\begin{eqnarray*}
a_k
&=&\sum_{r=0}^k \binom{j_1}{r}   \binom{j_2}{k-r} \lb \sum_{s=0}^r  \binom{i_2}{r-s}  \binom{i_3-k+r}{s} \rb \binom{i_3}{k-r}  r! (k-r)!\\
&=&\sum_{r=0}^k \sum_{s=0}^r \binom{j_1}{r}   \binom{j_2}{k-r}  \binom{i_2}{r-s} \binom{i_3}{k-r+s}  \frac{r! (k-r+s)!}{s!}
\end{eqnarray*}
and
\begin{eqnarray*}
b_k
&=&\sum_{r=0}^k   \binom{i_2}{r}  \binom{i_3}{k-r} \binom{j_1}{r} \lb \sum_{s=0}^{k-r} \binom{j_1-r}{s} \binom{j_2}{k-r-s} \rb r! (k-r)! \\
&=&\sum_{r=0}^k \sum_{s=0}^{k-r}  \binom{i_2}{r}  \binom{i_3}{k-r} \binom{j_1}{r+s}  \binom{j_2}{k-r-s}  \frac{(r+s)! (k-r)!}{s!}  \\
&=&\sum_{\ell=0}^k \sum_{s=0}^{\ell}  \binom{i_2}{\ell-s}  \binom{i_3}{k-\ell+s} \binom{j_1}{\ell}  \binom{j_2}{k-\ell}  \frac{ \ell! (k-\ell+s)! }{s!}.
\end{eqnarray*}
Thus, $a_k=b_k$ for $k \in \Zp$, and \eqref{ass} is proved.
\end{proof}

The elements of the superalgebra $\cA \blb z^{-1} \brb \blb \pz^{-1} \brb$ are called \emph{pseudo-differential operators}.
Let $\cA \zpz$ denote the subspace of $\cA \blb z^{-1} \brb \blb \pz^{-1} \brb$ consisting of all formal series of the form
$$
  \sum_{j=-\infty}^s \sum_{i=-\infty}^r a_{i j} z^i \pz^j, \qquad  r, s \in \Z \quad \text{and} \quad a_{i j} \in \cA.
$$
It is clear that $\cA \zpz$ is closed under multiplication, and so it is a subalgebra of $\cA \blb z^{-1} \brb \blb \pz^{-1} \brb$.
Applying the rules \eqref{rule}, we see that any expression of the form $\disp{\sum_{j=-\infty}^s \sum_{i=-\infty}^r a_{i j} \pz^i z^j}$, where $r, s \in \Z$ and $a_{i j} \in \cA$, is an element of $\cA\zpz$.
In particular,
\begin{equation}\label{rule-2}
\sum_{k=0}^{\infty} (-1)^k \binom{i}{k} \binom{j}{k}  k! \,   \pz^{j-k} \, z^{i-k}=z^{i} \pz^j,  \qquad \mbox{$i, j \in \Z$.}
\end{equation}
Conversely, any element of $\cA \zpz$ can be written as
$\disp{\sum_{j=-\infty}^s \sum_{i=-\infty}^r a_{i j} \pz^i z^j}$ for some $r, s \in \Z$ and $a_{i j} \in \cA$
due to \eqref{rule-2}.

It is easy to see from \eqref{rule} and \eqref{rule-2} that the identity map on the superalgebra $\cA$ extends to a superalgebra automorphism
\begin{equation}\label{pseudo-inv}
\bog: \cA \zpz \longrightarrow \cA \zpz
\end{equation}
such that
$\bog(z)=\pz$ and $\bog(\pz)=-z$. 
Also, $\bog$ has order 4.

Let $z$ and $w$ be commuting even variables.
For later use, we let $\cA\blb z^{-1}, w^{-1} \brb$ denote the superalgebra of all formal series of the form
$$
  \sum_{j=-\infty}^s \sum_{i=-\infty}^r a_{i j} z^i w^j, \qquad  r, s \in \Z \quad \text{and} \quad a_{i j} \in \cA.
$$

\section{Gaudin models with irregular singularities}  \label{GM}

In this section, we recall Takiff superalgebras and introduce evaluation maps of higher orders.
The primary purpose is to discuss Gaudin models with irregular singularities \cite{Ry06, FFTL} and their fundamental properties.

\subsection{Takiff superalgebras} \label{takiff}
For any Lie superalgebra $\fg$, we denote by $U(\fg)$ the universal enveloping algebra of $\fg$.
For any even variable $t$, the loop algebra $\fg[t, t^{-1}]:=\fg\otimes \C[t, t^{-1}]$ is defined to be the Lie superalgebra with commutation relations
$$
\hspace{1cm}  \big[A \otimes t^r, B \otimes t^s \big]=[A, B] \otimes t^{r+s} \qquad \mbox{for $A, B \in \fg$ and $r, s \in \Z$.}
$$
Here $[A, B]$ is the supercommutator of $A$ and $B$.
The current algebra $\fg[t]:=\fg \otimes \C[t]$ is a subalgebra of $\fg[t, t^{-1}]$.
We identify the Lie superalgebra $\fg$ with the subalgebra $\fg\otimes 1$ of constant polynomials in $\fg[t]$ and hence $U(\fg) \subseteq U(\fg[t])$.

For any $\ga \in \N$, $t^\ga \fg[t]:=\fg \otimes t^\ga \C[t]$ is an ideal of $\fg[t]$.
The corresponding quotient Lie superalgebra $\fg^{(\ga)}[t]:=\fg[t]/t^\ga \fg[t]$ is called a (generalized) \emph{Takiff superalgebra} over $\fg$.
For the sake of simplicity, we write
$$
A \otimes \ov{t}^i=A \otimes t^i+t^\ga \fg[t] \qquad \mbox{for $i \in \Zp$}.
$$

\subsection{Evaluation maps of higher orders} \label{ev-map}

Let $\fg$ be a Lie (super)algebra and $a \in \C$. There is an \emph{evaluation homomorphism} $\ev_a: U( \fg[t]) \longrightarrow U(\fg)$ given by
$$
\ev_a (A \otimes t^r )= a^r A \qquad \mbox{for $A \in \fg$ and $r \in \Zp$}.
$$
For $\ga \in \N$, write
$$
t_a=t-a \qquad \text{and} \qquad \fg(a, \ga)=\fg^{(\ga)}[t_a]
$$
There is a Lie superalgebra homomorphism $\fg[t] \longrightarrow \fg(a, \ga)$, defined by
$$
A \otimes t^r \mapsto \sum_{i=0}^r   \binom{r}{i} a^{r-i} (A \otimes \ov{t}_a^i),
$$
for $A \in \fg$ and $r \in \Zp$.
It extends to a superalgebra homomorphism
$$
\ev_{(a, \ga)}: U(\fg[t]) \longrightarrow U(\fg(a, \ga)),
$$
which we call the \emph{evaluation map of order $\ga$} at $a$.
If $\ga=1$, we have the commutative diagram
$$
\begin{tikzcd}
U(\fg[t]) \arrow[rd, "\ev_a"'] \arrow[r, "\ev_{(a,1)}"] & U(\fg(a, 1))  \arrow[d, "\cong"] \\
&  U(\fg)
\end{tikzcd}
$$

Fix $\ell \in \N$. For $\z:=(z_1, \ldots, z_\ell) \in \C^\ell$ and $\jjp:=(\jp_1,\ldots, \jp_{\ell}) \in \N^\ell$,
consider the direct sum of Takiff superalgebras
$$
\gz:=\bigoplus_{i=1}^{\ell} \fg(z_i, \ga_i).
$$
These $z_i$'s correspond to the singularities for the Gaudin algebras to be defined in \secref{Gaudin-gld} and \secref{Gaudin-gls}.

Let $\Delta^{(\ell-1)}: U(\fg[t]) \longrightarrow U(\fg[t])^{\otimes \ell}$ be the $(\ell-1)$-fold coproduct on $U(\fg[t])$.
The composite map
$$
\evz:=(\ev_{z_1} \otimes \ldots \otimes \ev_{z_\ell}) \circ \Delta^{(\ell-1)} : U( \fg[t]) \longrightarrow U(\fg)^{\otimes {\ell}}
$$
is called the \emph{evaluation map at $\z$}.
On the other hand, we also have the map
$$
\evjz:= \lb \ev_{(z_1, \jp_1)} \otimes \ldots \otimes \ev_{(z_\ell, \jp_\ell)} \rb \circ \Delta^{(\ell-1)} : U( \fg[t]) \longrightarrow U(\gz),
$$
called the \emph{evaluation map of order $\jjp$ at $\z$}.
If $\jjp=(1^\ell)$, i.e., $\jp_i=1$ for $1 \le i \le \ell$, then we have the commutative diagram
$$
\begin{tikzcd}
U(\fg[t]) \arrow[rd, "\evz"'] \arrow[r, " \un{\ev}_{\lb \z, (1^\ell) \rb}"] & U \lb \fg \! \lb \z, (1^\ell) \rb \rb \arrow[d, "\cong"] \\
&  U(\fg)^{\otimes \ell}
\end{tikzcd}
$$

\subsection{Feigin--Frenkel centers}\label{FFC}

Let $t$ be an even variable.
For a general linear Lie (super)algebra $\fg$, let $\Vac(\fg)$ be the \emph{universal affine vertex algebra at the critical level} associated to the affine Lie (super)algebra $\wh{\fg}:=\fg[t,t^{-1}]\oplus \C K$; see \cite{Fr07, FBZ, Mo18, MR} for details. Note that $\Vac(\fg)$ is a $\wh{\fg}$-module.
 The center of the vertex algebra $\Vac(\fg)$ is given by
$$
\fzg=\setc*{\! v \in \Vac(\fg)}{\fg[t] v=0 \!},
$$
called the \emph{Feigin--Frenkel center}.
By the Poincar\'e--Birkhoff--Witt theorem, we identify $\Vac(\fg)$ with $U(\fgm)$ (as superspaces).
Moreover, the (super)algebra $U(\fgm)$ is equipped with the (even) derivation $\T:=-d/dt$ defined by
\begin{equation}\label{der}
\T(1)=0 \qquad \text{and}\qquad \T(A \otimes t^{-r})=r A \otimes t^{-r-1}
\end{equation}
for $A \in \fg$ and $r \in \N$.
Note that $\T$ corresponds to the translator operator on $\Vac(\fg)$, and that $\fzg$ can be viewed as a commutative subalgebra of $U(\fgm)$ and is $\T$-invariant.

Suppose $\mu \in \fg^*$ vanishes on the odd part of $\fg$. Then there is a superalgebra homomorphism
$$
 \Psi^\mu :  U(\fgm)   \longrightarrow U(\fg[t])  [\tns[z^{-1}]\tns]
$$
given by
$$
 \hspace{1cm}  \Psi^\mu(A \otimes t^{-r})= A \otimes (t-z)^{-r} +\de_{r,1} \mu(A), \quad \mbox{for $A \in \fg \,$ and $\, r \in \N$.}
$$
The evaluation map $\evjz$ extends to
$$
\evjz: U( \fg[t]) [\tns[ z^{-1}]\tns]   \longrightarrow U(\gz) [\tns[ z^{-1}]\tns]
$$
by setting $\evjz(z^{-1})=z^{-1}$.
We have the composite map
$$
\Pmuz :=\evjz \circ \Psi^\mu: U(\fgm) \longrightarrow U \big(\gz \big) [\tns[z^{-1}]\tns].
$$
Using the identity
 $$
(t-z)^{-1}=-\frac{1}{z-z_i} \left(1-\frac{\tzi}{z-z_i} \rb^{-1}=-\sum_{k=0}^\infty \frac{\tzi^k}{(z-z_i)^{k+1}}
 $$
 for $1 \le i \le \ell$,
we find that
\begin{equation} \label{Pmuz}
\Pmuz \lb A \otimes t^{-1} \rb= - \sum_{i=1}^{\ell}  \sum_{k=0}^{\jp_i-1} \frac{A \otimes \otzi^k}{(z-z_i)^{k+1}}+ \mu(A),  \qquad \mbox{for $A \in \fg$.}
\end{equation}

Let
$$
\tau=-\partial_t,
$$
where $t$ and $\partial_t$ satisfy the rules similar to \eqref{rule}.
The map $\Pmuz$ extends to a superalgebra homomorphism
$$
\Pmuz :  U(\fgm) \blb \tau^{-1} \brb  \longrightarrow U \big(\gz \big) \zpz
$$
defined by
 $$
\Pmuz   \left(\sum_{i=-\infty}^r a_i \tau^i \rb=\sum_{i=-\infty}^r \Pmuz  (a_i) \pz^i
 $$
for $a_i \in U(\fgm)$ and $r \in \Z$.

\subsection{Gaudin algebras for $\gld$} \label{Gaudin-gld}

Let
$$
\cT_d = \Big[\delta_{i, j} \tau+e_{ij}\otimes t^{-1} \Big]_{i,j=1,\ldots,d},
$$
which is a $d \times d$ Manin matrix over $U(\gldm)[\tau]$.
We have an expansion
$$
 \cdet (\cT_d)=\tau^d+\sum_{i=1}^{d}a_i \tau^{i-1}
$$
for some $a_1, \ldots, a_d \in \fzd$.

\begin{thm} [{\cite[Theorem 3.1]{CM} (cf. \cite{CT})}] \label{complete-SS}
The set $\{\T^r a_i \, | \, i=1, \ldots, d, \, r \in \Zp \}$ is algebraically independent, and
$\fzd=\C \! \ls \T^r a_i \, | \, i=1, \ldots, d, \, r \in \Zp \, \rs.$
\end{thm}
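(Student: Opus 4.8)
The plan, in essence that of \cite{CM}, is to identify $a_1,\dots,a_d$ with a complete set of generators of $\fzd$ by passing to symbols. Grade $U(\gldm)$ by $t$-degree, giving $e_{ij}\otimes t^{-r}$ degree $r$; this is an honest $\Zp$-grading (commutators preserve $t$-degree) with finite-dimensional graded pieces, $\fzd$ is a graded subalgebra, and $\T$ is homogeneous of degree $1$. I take for granted, as recorded in the excerpt, that $a_1,\dots,a_d\in\fzd$, together with the Feigin--Frenkel theorem \cite{Fr07}: $\fzd$ is a polynomial algebra on $\{\T^r b_l\mid l=1,\dots,d,\ r\in\Zp\}$ for $t$-homogeneous Segal--Sugawara vectors $b_l$ with $\deg b_l=l$. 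In particular the $t$-graded Hilbert series of $\fzd$ is $\prod_{l=1}^{d}\prod_{r\ge0}\bigl(1-q^{\,l+r}\bigr)^{-1}$.

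First I would compute the images of the $a_i$ in $S(\gldm)=\operatorname{gr}U(\gldm)$. Expanding $\cdet(\cT_d)$ and using the rule $\tau\,t^{-1}=t^{-2}+t^{-1}\tau$ (the analogue for $t,\tau$ of \eqnref{rule}), one sees, by pushing every $\tau$ to the right, that each monomial contributing to the coefficient $a_i$ of $\tau^{i-1}$ has $t$-degree $d-i+1$, and that the part of maximal PBW-degree (namely $d-i+1$, attained when no $\tau$ is traded against a $t^{-1}$) is precisely the sum of the principal $(d-i+1)\times(d-i+1)$ column-minors of $\bigl[e_{ij}\otimes t^{-1}\bigr]$. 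Writing $\Phi_l\in S(\gldm)$ for the coefficient of $u^{d-l}$ in $\det\bigl(uI_d+[\,\overline{e_{ij}\otimes t^{-1}}\,]\bigr)$ --- the sum of the principal $l\times l$ minors, $t$-homogeneous of degree $l$ --- this gives $\operatorname{symb}(a_i)=c_i\Phi_{d-i+1}$ with $c_i\in\C^{\times}$. Since $\T$ preserves the PBW filtration and induces on $S(\gldm)$ the degree-raising derivation $\bar{\T}$ with $\bar{\T}\bigl(\overline{e_{ij}\otimes t^{-r}}\bigr)=r\,\overline{e_{ij}\otimes t^{-r-1}}$, we get $\operatorname{symb}(\T^r a_i)=c_i\,\bar{\T}^{\,r}\Phi_{d-i+1}$.

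It then suffices to prove that $\{\bar{\T}^{\,r}\Phi_l\mid l=1,\dots,d,\ r\in\Zp\}$ is algebraically independent in $S(\gldm)$. Granting this, a family of elements of $U(\gldm)$ whose symbols are algebraically independent is itself algebraically independent, so $\{\T^r a_i\}$ is algebraically independent and $\C[\T^r a_i]$ is a graded polynomial algebra; since $i\mapsto d-i+1$ permutes $\{1,\dots,d\}$, its $t$-graded Hilbert series is again $\prod_{l=1}^{d}\prod_{r\ge0}\bigl(1-q^{\,l+r}\bigr)^{-1}$, so the graded inclusion $\C[\T^r a_i]\subseteq\fzd$ of graded vector spaces with finite-dimensional pieces and equal Hilbert series must be an equality. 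This gives both assertions.

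The algebraic independence of the $\bar{\T}^{\,r}\Phi_l$ is the step I expect to be the main obstacle, and I would obtain it by restriction to a Kostant-type slice. Form the matrix $\mathcal{E}(z)$ over $S(\gldm)[[z]]$ with $(i,j)$ entry $\sum_{r\ge1}\overline{e_{ij}\otimes t^{-r}}\,z^{\,r-1}$; then $\bar{\T}$ corresponds to $\partial_z$, and the $\bar{\T}^{\,r}\Phi_l$ are, up to nonzero factors, the Taylor coefficients of the characteristic-polynomial coefficients $\Phi_1(z),\dots,\Phi_d(z)$ of $\mathcal{E}(z)$. Now restrict $\mathcal{E}(z)$ to a companion-matrix-valued power series: this is a ring surjection from $S(\gldm)$ onto a polynomial ring in the Taylor coefficients of the free parameters $c_1(z),\dots,c_d(z)$ of the slice, under which each $\Phi_l(z)$ goes to an invertible linear combination of the $c_k(z)$; hence the images of the $\bar{\T}^{\,r}\Phi_l$ form a coordinate system there and are algebraically independent, forcing algebraic independence in $S(\gldm)$. (Alternatively one may cite the known description of $S(\gldm)^{\gld[t]}$ as the polynomial algebra on the $\bar{\T}$-derivatives of a system of basic invariants, one such system being $\Phi_1,\dots,\Phi_d$.) The rest --- the column-determinant combinatorics of the symbol computation and the Hilbert-series count --- is routine.
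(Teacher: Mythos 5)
The paper offers no proof of this theorem; it is quoted verbatim from \cite{CM} (Theorem 3.1 there). Your reconstruction --- grade by $t$-degree, compute the PBW symbols of the $a_i$ and identify $\operatorname{symb}(a_i)$ with the sum $\Phi_{d-i+1}$ of principal minors, establish algebraic independence of the $\ov{\T}^{\,r}\Phi_l$ by specializing to a companion-matrix slice, and then match Hilbert series against the Feigin--Frenkel structure theorem to upgrade the graded inclusion $\C[\T^r a_i]\subseteq\fzd$ to an equality --- is correct and is essentially the Chervov--Molev argument, so there is nothing to add.
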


Fix $\z:=(z_1, \ldots, z_\ell) \in \C^\ell$ and $\jjp:=(\jp_1, \ldots, \jp_\ell) \in \N^\ell$.
For any $\mu \in \gld^*$, let
$$
\cL_d^{\mu}(\z, \jjp)  = \Big[\Pmuz \big(\delta_{i, j} \tau+e_{ij}\otimes t^{-1} \big)\Big]_{i,j=1,\ldots,d}.
$$
Since $\Pmuz$ is a homomorphism, we have
$$
 \cdet (\cL_d^{\mu}(\z, \jjp) )=\pz^d+\sum_{i=1}^{d}a_i(z)\pz^{i-1},
$$
where $a_i(z):=\Pmuz (a_i) \in U \big(\gldz \big) [\tns[z^{-1}]\tns]$.

Let $\cA_d^{\mu}(\z, \jjp)$ be the subalgebra of $U \big(\gldz \big)$ generated by the coefficients of the series $a_i(z)$ for $i=1, \ldots, d$.
We call $\cA_d^{\mu}(\z, \jjp)$ the \emph{Gaudin algebra for $\gld$ with singularities of orders $\jp_i$ at $z_i$, $i=1, \ldots, \ell$, relative to $\mu$}; see \eqref{Pmuz}. For $1 \le i \le \ell$, the singularity at $z_i$ is said to be \emph{regular} if $\jp_i=1$ and \emph{irregular} otherwise.
The commutativity of $\fzg$ yields the commutativity of $\Pmuz \lb \fzg \rb$, and hence the algebra $\cA_d^{\mu}(\z, \jjp)$ is commutative \cite{FFTL, Ry06}.

\begin{rem} \label{complete}
 The algebra  $\cA_d^{\mu}(\z, \jjp)$ coincides with the subalgebra of $U \big(\gldz \big)$ generated by the coefficients of $\Pmuz(S)\in U \big(\gldz \big) [\tns[z^{-1}]\tns]$ for $S \in \fzd$.
 This follows from \thmref{complete-SS} and $\Pmuz \big(\T(S) \big)=d/dz \big(\Pmuz(S) \big)$ (cf. \cite[Proposition 5.9]{ChL25-1}).
 \end{rem}

 For any $\cA_d^{\mu}(\z, \jjp)$-module $V$, let $\cA_d^{\mu}(\z, \jjp)_V$ denote the image of the Gaudin algebra $\cA_d^{\mu}(\z, \jjp)$ in $\End(V)$. It is called the \emph{Gaudin algebra} of $V$ with singularities of orders $\jp_i$ at $z_i$, $i=1, \ldots, \ell$, relative to $\mu$.

As in \eqref{pseudo-inv}, the identity map on the algebra $U \big(\gldz \big)$ extends to an automorphism
$$
\bog: U \big(\gldz \big) \zpz \longrightarrow U \big(\gldz \big) \zpz
$$
such that
$\bog(z)=\pz$ and $\bog(\pz)=-z$.
Evidently,
$$
\cL_d^{\mu}(\z, \jjp)  = \Big[\delta_{i, j} \pz+\Pmuz \big(e_{ij}\otimes t^{-1} \big)\Big]_{i,j=1,\ldots,d}.
$$
Set
\begin{equation} \label{wtL}
\wh{\cL}_d^{\mu}(\z, \jjp) = - \Big[\bog \big( \delta_{i, j}  \pz - \Psi^{\mu}_{(\z, \jjp)} \big(e_{ji}\otimes t^{-1} \big) \Big]_{i,j=1,\ldots,d}.
\end{equation}
We have the following (see \cite[the discussion in Section 4.2]{VY}).

\begin{prop} \label{L'}
The column determinant $\cdet \big(\wh{\cL}_d^{\mu}(\z, \jjp) \big)$ has an expansion 
$$\cdet \big(\wh{\cL}_d^{\mu}(\z, \jjp) \big)=z^d+\sum_{i=1}^{d} \wh{a}_i(\pz) z^{i-1}$$ 
for some $\wh{a}_i(\pz) \in U \big(\gldz \big) [\tns[\pz^{-1}]\tns]$.
Moreover, the algebra $\cA_d^{\mu}(\z, \jjp)$ is generated by the coefficients of the series $\wh{a}_i(\pz)$ for $i=1, \ldots, d$.
\end{prop}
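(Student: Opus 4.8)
We outline the argument, following \cite[Section~4.2]{VY}. The plan is to first put $\wh{\cL}_d^{\mu}(\z, \jjp)$ into a form from which the $z$-expansion is immediate, and then to identify the generated subalgebra by relating $\cdet \big(\wh{\cL}_d^{\mu}(\z, \jjp)\big)$ to the formal adjoint of the scalar pseudo-differential operator $\cdet \big(\cL_d^{\mu}(\z, \jjp)\big)$. First I would record, from \eqref{wtL}, that
$$
\big(\wh{\cL}_d^{\mu}(\z, \jjp)\big)_{ij} = \delta_{i, j}\, z + \bog\big(\Pmuz(e_{ji}\otimes t^{-1})\big), \qquad i, j=1, \ldots, d.
$$
By \eqref{Pmuz}, each $\Pmuz(e_{ji}\otimes t^{-1})$ lies in $U \big(\gldz \big)[\tns[z^{-1}]\tns]$ with constant term $\mu(e_{ji})\cdot 1$; since $\bog$ restricts to the identity on $U \big(\gldz \big)$ and sends $z \mapsto \pz$, this shows $\wh{\cL}_d^{\mu}(\z, \jjp) = z\, I_d + C$ with $C$ a $d \times d$ matrix over $U \big(\gldz \big)[\tns[\pz^{-1}]\tns]$. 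Equivalently, $\wh{\cL}_d^{\mu}(\z, \jjp)$ is the entrywise $\bog$-image of the formal adjoint $\big(\cL_d^{\mu}(\z, \jjp)\big)^{*}$ of the matrix pseudo-differential operator $\cL_d^{\mu}(\z, \jjp)$, where $(\cdot)^{*}$ denotes transposition together with $\pz \mapsto -\pz$.

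For the expansion, I would expand $\cdet(z\, I_d + C)$ over $\fS_d$. The identity permutation contributes $\prod_{k=1}^{d}\big(z + C_{kk}\big)$, which after normal ordering --- moving every scalar factor $z$ to the right past the series in $\pz^{-1}$ by \eqref{rule} --- has a unique top term $z^d$; any other monomial in the expansion has at most $d-1$ scalar factors $z$, and normal ordering never raises the $z$-degree and keeps all coefficients in $U \big(\gldz \big)[\tns[\pz^{-1}]\tns]$. This gives $\cdet \big(\wh{\cL}_d^{\mu}(\z, \jjp)\big) = z^d+\sum_{i=1}^{d}\wh{a}_i(\pz)\, z^{i-1}$ with $\wh{a}_i(\pz)\in U \big(\gldz \big)[\tns[\pz^{-1}]\tns]$.

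For the claim about generators, I would use that $\bog$ is the identity on $U \big(\gldz \big)$ and carries a normally ordered monomial $c\, z^a\pz^b$ to $(-1)^b c\, \pz^a z^b$; hence it identifies, up to signs, the coefficients of a series in $U \big(\gldz \big)\zpz$ with those of its image in $U \big(\gldz \big)\pzz$, and in particular preserves the subalgebra of $U \big(\gldz \big)$ generated by those coefficients. Since $\cdet$ is a fixed expression in the entries and $\bog$ is an algebra homomorphism, $\cdet \big(\wh{\cL}_d^{\mu}(\z, \jjp)\big) = \bog\big(\cdet \big((\cL_d^{\mu}(\z, \jjp))^{*}\big)\big)$, and the key identity (see below) is $\cdet \big((\cL_d^{\mu}(\z, \jjp))^{*}\big) = \big(\cdet \big(\cL_d^{\mu}(\z, \jjp)\big)\big)^{*}$. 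Thus it suffices to show that the coefficients of the formal adjoint of $\cdet \big(\cL_d^{\mu}(\z, \jjp)\big) = \pz^d+\sum_{i=1}^{d} a_i(z)\pz^{i-1}$ generate $\cA_d^{\mu}(\z, \jjp)$. That adjoint is $(-\pz)^d+\sum_{i=1}^{d}(-\pz)^{i-1} a_i(z)$; after normal ordering via \eqref{rule}, its coefficients are $\C$-linear combinations of the $a_i(z)$ and their $z$-derivatives, and by \remref{complete} the derivatives $\tfrac{d}{dz}a_i(z)=\Pmuz(\T a_i)$ have coefficients that are scalar multiples of those of $a_i(z)$. Reading the normally ordered adjoint from the top, the coefficient of $\pz^{d-1}$ recovers $a_d(z)$, that of $\pz^{d-2}$ then recovers $a_{d-1}(z)$, and so on; hence the coefficients of the adjoint generate exactly the same subalgebra as the coefficients of $\cdet \big(\cL_d^{\mu}(\z, \jjp)\big)$, namely $\cA_d^{\mu}(\z, \jjp)$.

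I expect the main obstacle to be the key identity $\cdet \big((\cL_d^{\mu}(\z, \jjp))^{*}\big) = \big(\cdet \big(\cL_d^{\mu}(\z, \jjp)\big)\big)^{*}$. Over a commutative ring this is just $\det(A^{t})=\det(A)$, but over the noncommutative ring of pseudo-differential operators one only has the tautology $\cdet(A^{t})=\rdet(A)$, which differs from $\cdet(A)$ in general; bridging the gap uses crucially that $\cL_d^{\mu}(\z, \jjp)$ is a Manin matrix --- it is the image of the Manin matrix $\cT_d$ under $\Pmuz$ --- and this is precisely the computation taken from \cite[Section~4.2]{VY}. A secondary point is to verify that all the normal-ordering manipulations are legitimate inside $U \big(\gldz \big)\pzz$.
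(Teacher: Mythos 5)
Your outline is correct and is essentially the paper's own route: the paper gives no proof of this proposition, only the same pointer to \cite[Section 4.2]{VY} that you rely on for the key identity $\cdet\big((\cL_d^{\mu}(\z,\jjp))^{*}\big)=\big(\cdet(\cL_d^{\mu}(\z,\jjp))\big)^{*}$, and your reduction of the generator statement to that identity via the triangular change of generators and \remref{complete} is sound. For reassurance on the crux you flag: writing $\cL_{ij}=\delta_{ij}\pz+b_{ij}(z)$ and taking $d=2$, the difference $\cdet(\cL^{*})-(\cdet\cL)^{*}$ comes out to $(b_{11}'-b_{22}')-[b_{12},b_{21}]$, which equals $-[b_{22},b_{11}]=0$ by the Manin relation $[\cL_{12},\cL_{21}]=[\cL_{22},\cL_{11}]$ together with $[e_{22}\otimes t^{-1},e_{11}\otimes t^{-1}]=0$ — so note that it is the Manin property \emph{plus} the commutativity of the diagonal (Cartan) parts of the matrix, not the Manin property alone, that closes the gap between $\cdet(A^{t})=\rdet(A)$ and $\cdet(A)$ here.
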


\subsection{Gaudin algebras for $\gl_\pqmn$} \label{Gaudin-gls}

Recall the symbol $\ds=\pqmn$ and fix $\s:=(0^p, 1^q, 0^m, 1^n) \in \cS_\ds$.
Let
$$
\cT_\ds=\Big[\delta_{i,j}\tau+(-1)^{|i|}E^i_j \otimes t^{-1}\Big]_{i,j\in \I}.
$$
We know that $\cT_\ds$ is an amply invertible $(p+q+m+n) \times (p+q+m+n)$ Manin matrix of type $\s$ over $U(\glsm) \blb \tau^{-1} \brb$.
We have an expansion
$$
\Bers  \! \left(\cT_\ds \rb=\sum_{i=-\infty}^{p+m-q-n}b_i \tau^i,
$$
for some $b_i \in \fzr$; see \cite[Corollary 3.3]{MR} and \cite[Section 3.1]{ChL25-1}. 
Define $\hat \fz_\ds$ to be the subalgebra of $U(\glsm)$ generated by
$$
\{\T^r b_i \, | \, i \le p+m-q-n, \, i \in \Z, \, r \in \Zp \}.
$$
Since all $b_i$'s belong to $\fzr$, and $\fzr$ is $\T$-invariant, we see that $\hat \fz_\ds$ is a commutative subalgebra of $\fzr$.

Fix $\z \in \C^\ell$ and $\jjp \in \N^\ell$.
For any $\mu \in \gls^*$ which vanishes on the odd part of $\gls$, let
\begin{equation} \label{Lsmu}
\Lr^{\mu}(\z, \jjp) = \Big[\Pmuz   \big(\delta_{i,j}\tau+(-1)^{|i|}E^i_j \otimes t^{-1} \big) \Big]_{i,j\in \I}.
\end{equation}
Then
$$
\Bers \! \left( \Lr^{\mu}(\z, \jjp)  \rb=\sum_{i=-\infty}^{p+m-q-n} b_i (z) \pz^i,
$$
which belongs to $U \big(\glsz \big) \zpz$. Here $b_i (z):=\Pmuz   (b_i)$.

Let $\cA_\ds^{\mu}(\z, \jjp)$ be the subalgebra of $U \big(\glsz\big)$ generated by the coefficients of the series $b_i (z)$, for $i \in \Z$ with $i \le p+m-q-n$.
We call $\cA_\ds^{\mu}(\z, \jjp)$ the \emph{Gaudin algebra for $\gls$ with singularities of orders $\jp_i$ at $z_i$, $i=1, \ldots, \ell$, relative to $\mu$}.
Also, for $1 \le i \le \ell$, the singularity at $z_i$ is said to be \emph{regular} if $\jp_i=1$ and \emph{irregular} otherwise.

\begin{rem} \label{complete-s}
\begin{enumerate}[\normalfont(i)]

\item The algebra $\cA_\ds^{\mu}(\z, \jjp)$ equals the subalgebra of $U \big(\glsz \big)$ generated by the coefficients of $\Pmuz(S)$ for $S \in \hat \fz_\ds$ (as $\Pmuz \big( \T(S) \big)=d/dz \big( \Pmuz(S) \big)$) (cf. \cite[Proposition 5.9]{ChL25-1}).

\item  We do not know whether \remref{complete} holds for $\gls$ in general as it is still a conjecture that $\fzr=\hat \fz_\ds$.
(\emph{Note}: The conjecture has been shown to be valid for $\gl_{1|1}$ \cite{MM15} and $\gl_{2|1}$ \cite{AN}.)

\end{enumerate}
\end{rem}

The commutativity of $\hat \fz_\ds$ yields the commutativity of $\Pmuz \lb \hat \fz_\ds \rb$.
Using an argument similar to the proof of \cite[Corollary 3.6]{MR}, we see that $\cA_\ds^{\mu}(\z, \jjp)$ is a commutative algebra.

For any $\cA_\ds^{\mu}(\z, \jjp)$-module $V$, let $\cA_\ds^{\mu}(\z, \jjp)_V$ denote the image of the Gaudin algebra $\cA_\ds^{\mu}(\z, \jjp)$ in $\End(V)$. We call $\cA_\ds^{\mu}(\z, \jjp)_V$ the \emph{Gaudin algebra} of $V$ with singularities of orders $\jp_i$ at $z_i$, $i=1, \ldots, \ell$, relative to $\mu$.

\section{A duality for Gaudin models} \label{duality}

In this section, we construct a joint action of the Gaudin algebras with irregular singularities for $\gld$ and $\gl_\pqmn$ on the Fock space of $d(p+m)$ bosonic and $d(q+n)$ fermionic oscillators. We establish a duality of $(\gld, \gl_\pqmn)$ for Gaudin models and give an application to the action of the Gaudin algebra for $\gl_\pqmn$ on a certain class of modules over a direct sum of Takiff superalgebras over $\gl_\pqmn$.

\subsection{Actions on Fock spaces} \label{Fock}

Let $\F$ be the polynomial superalgebra generated by the variables $\xai$ and $\yar$, for $i=1, \ldots, m+n$, $r=1, \ldots, p+q$ and $a=1, \ldots, d$, where $\xai$ (resp., $\yar$) are even for $1 \le i \le m$ (resp., $1 \le r \le p$) and are odd otherwise.
There are commuting actions of $\gld$ and $\gl_\pqmn$ on $\F$ that form a Howe dual pair \cite{CLZ}.
The superalgebra $\F$ may be regarded as the Fock space of $d(p+m)$ bosonic and $d(q+n)$ fermionic oscillators (cf. \cite{CL03, LZ06}).
Let $\cD$ be the corresponding \emph{Weyl superalgebra}. That is, $\cD$ is the associative unital superalgebra generated by $\xai$ and $\yar$ as well as their derivatives
$$
\pxai:=\frac{\pa}{\pa x^a_i} \qquad \text{and} \qquad \pyar:=\frac{\pa}{\pa y^a_r}
$$
for $1 \le i \le m+n$, $1 \le r \le p+q$ and $1 \le a \le d$.
The superalgebra $\cD$ is naturally a Lie superalgebra with supercommutator $[ \cdot, \cdot ]$ defined as in \eqref{s-comm}.
Clearly, $\cD$ acts naturally on $\F$, and so it is a subalgebra of $\End(\F)$.

Fix $\dd \in \N$ and $p^\prime, q^\prime, m^\prime, n^\prime \in \Zp$ with $\dd \le d$, $p^\prime \le p$, $q^\prime \le q$, $m^\prime \le m$, and $n^\prime \le n$. Let
$$
\el=p^\prime+q^\prime+m^\prime+n^\prime.
$$
Fix two sequences
$$
\w:=(w_1,\ldots,w_{\dd}) \in \C^{\dd} \qquad \text{and} \qquad \z:=(z_1,\ldots, z_{\el}) \in \C^{\el}.
$$
We write $d$ (resp., $p$, $q$, $m$ and $n$) into a sum of $\dd$ (resp., $p^\prime$, $q^\prime$, $m^\prime$ and $n^\prime$) positive integers.
More precisely, we let $\bjp_1, \ldots, \bjp_{\dd} \in \N$ be such that
$$
\sum_{i=1}^{\dd} \bjp_i =d,
$$
and
let $\jp_1, \ldots, \jp_{\el} \in \N$ be such that
\begin{equation} \label{rel}
\sum_{i=1}^{p^\prime} \jp_i = p, \quad \sum_{i=1}^{q^\prime} \jp_{p^\prime+i}= q, \quad \sum_{i=1}^{m^\prime} \jp_{p^\prime+q^\prime+i} = m, \quad \sum_{i=1}^{n^\prime} \jp_{p^\prime+q^\prime+m^\prime+i} = n.
\end{equation}
Set
$$
\bjjp=(\bjp_1,\ldots, \bjp_{\dd}) \qquad \text{and} \qquad  \jjp=(\jp_1,\ldots, \jp_{\el}).
$$
Also, we define $\va_1, \ldots, \va_{\dd+1} \in \Zp$ by
$$
\va_1=0 \qquad \text{and} \qquad  \va_{i+1}=\sum_{j=1}^i \bjp_j, \quad 1 \le i \le \dd,
$$
and define $\rva_1, \ldots, \rva_{\el+1} \in \Zp$ by
$$
\rva_1=0  \qquad \text{and} \qquad  \rva_{i+1}=\sum_{j=1}^i \jp_j, \quad1 \le i \le \el.
$$

For $1\le i \le p^\prime+q^\prime$ and $p^\prime+q^\prime+1 \le j \le \el$, let
$$
\y_{(i)}=\{ y^a_r \, | \, \rva_i+1 \le r \le \rva_{i+1}, 1 \le a \le d \}
$$
and
$$
\x_{(j)}=\{ x^a_r \, | \, \rva_{j}+1-p-q \le r \le \rva_{j+1}-p-q, 1 \le a \le d \}.
$$
We have
\begin{equation} \label{Fock-pqmn}
\F =  \lb \bigotimes_{i=1}^{p^\prime+q^\prime} \C \big[ \y_{(i)} \big] \rb \otimes \lb \bigotimes_{j=p^\prime+q^\prime+1}^{\el} \C \big[\x_{(j)} \big] \rb \!.
\end{equation}
The following proposition induces an action of $U \big(\gldzb \big)$ on $\F$.

\begin{prop} \label{gld-map}

There is a Lie superalgebra homomorphism $\vpd : \gldzb \longrightarrow \cD$
defined by
\begin{equation} \label{vpd-e}
{\everymath={\disp}
\begin{array}{cll}
e_{ab}\otimes \otzi^k \mapsto -\sum_{r=\rva_i+1}^{\rva_{i+1}-k}  y^b_r \pa_{y^a_{r+k}},  \\
e_{ab}\otimes \otzj^l \mapsto \sum_{r=\rva_{j}+1-p-q}^{\rva_{j+1}-l-p-q} (-1)^{|p+q+r|}  \pa_{x^b_r} x^a_{r+l} ,
\end{array}}
\end{equation}
for  $0 \le k \le \jp_i-1$, $0 \le l \le \jp_j-1$, $1 \le i \le p^\prime+q^\prime$, $p^\prime+q^\prime+1 \le j \le \el$ and $1 \le a, b \le d$.
Thus, $\vpd$ extends to a superalgebra homomorphism
$\vpd : U \big(\gldzb \big) \longrightarrow \cD$.

\end{prop}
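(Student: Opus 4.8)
The plan is to reduce the statement to a single Takiff summand and then to run a direct oscillator computation inside the Weyl superalgebra $\cD$. Since $\gldzb=\bigoplus_{i=1}^{\el}\gldzi$ as a Lie superalgebra, and the elements $e_{ab}\otimes\otzi^k$ with $1\le a,b\le d$ and $0\le k\le\jp_i-1$ form a (purely even) basis of the $i$-th summand $\gldzi$, the formulas \eqref{vpd-e} unambiguously define a linear map $\vpd\colon\gldzb\to\cD$, and inspecting each line of \eqref{vpd-e} shows that the image lies in the even part of $\cD$. Using the decomposition \eqref{Fock-pqmn}, I would observe that for $1\le i\le p^\prime+q^\prime$ (resp.\ $p^\prime+q^\prime+1\le j\le\el$) the image of $\gldzi$ (resp.\ of $\gld(z_j,\jp_j)$) lies in the subalgebra of $\cD$ generated by $\y_{(i)}$ together with the derivatives $\pa_{y^a_r}$, $y^a_r\in\y_{(i)}$ (resp.\ by $\x_{(j)}$ together with the corresponding derivatives). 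These subalgebras involve pairwise disjoint sets of variables and are generated by even elements, hence commute pairwise in $\cD$. So it suffices to show that $\vpd$ restricts to a Lie superalgebra homomorphism on each single summand, i.e.\ that
$$
\big[\vpd(e_{ab}\otimes\otzi^k),\,\vpd(e_{cd}\otimes\otzi^l)\big]
=\de_{b,c}\,\vpd(e_{ad}\otimes\otzi^{k+l})-\de_{a,d}\,\vpd(e_{cb}\otimes\otzi^{k+l}),
$$
with the convention $\otzi^{k+l}=0$ whenever $k+l\ge\jp_i$.

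I would then handle the two kinds of summands separately. Fix $i$ with $1\le i\le p^\prime+q^\prime$. By \eqref{rel} the index range $\rva_i+1\le r\le\rva_{i+1}$ lies entirely inside $\{1,\dots,p\}$ or inside $\{p+1,\dots,p+q\}$, so all the $y^a_r$ occurring in the formula for $\vpd(e_{ab}\otimes\otzi^k)$ share a common parity. Using the super-Leibniz rule together with the canonical relations $[\pa_{y^a_r},y^c_s]=\de_{a,c}\de_{r,s}$ and $[y^a_r,y^c_s]=[\pa_{y^a_r},\pa_{y^c_s}]=0$ in $\cD$, the bracket of $-\sum_r y^b_r\pa_{y^a_{r+k}}$ with $-\sum_s y^d_s\pa_{y^c_{s+l}}$ collapses to two single sums; the purely quadratic part cancels because the parity sign picked up when a $\pa_y$ is moved past a $y$ is undone by the signs picked up when the $y$'s and the $\pa_y$'s are reordered among themselves. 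Contracting $\pa_{y^a_{r+k}}$ against $y^d_s$ (forcing $a=d$, $s=r+k$) produces $-\de_{a,d}\sum_r y^b_r\pa_{y^c_{r+k+l}}$, and contracting $\pa_{y^c_{s+l}}$ against $y^b_r$ (forcing $b=c$, $r=s+l$) produces $\de_{b,c}\sum_s y^d_s\pa_{y^a_{s+k+l}}$; reading these back through \eqref{vpd-e} gives exactly the right-hand side above. The summand with $p^\prime+q^\prime+1\le j\le\el$ is treated identically: the $x$-variables appearing again share a common parity, the prefactor $(-1)^{|p+q+r|}$ is a constant $\pm 1$ over the block and so squares to $1$, and contracting $\pa_{x^b_r}$ against $x^a_{r+l}$ via $[\pa_{x^b_r},x^c_s]=\de_{b,c}\de_{r,s}$ reproduces the Takiff relation. (When $l=0$ and $a=b$ the operator $\pa_{x^a_r}x^a_r$ is not normally ordered, but only the Leibniz rule enters the bracket, so this is harmless.)

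It then remains to note that the truncation is automatic: in \eqref{vpd-e} the summation range for $e_{ab}\otimes\otzi^{k+l}$ runs from $\rva_i+1$ to $\rva_{i+1}-(k+l)$, which is empty exactly when $k+l\ge\rva_{i+1}-\rva_i=\jp_i$, so $\vpd(e_{ab}\otimes\otzi^{k+l})=0$ there, matching $\otzi^{k+l}=0$ in $\gldzi$; and the constraints $0\le k,l\le\jp_i-1$ keep all indices inside $[\rva_i+1,\rva_{i+1}]$, so every expression is a genuine element of $\cD$. This shows that $\vpd\colon\gldzb\to\cD$ is a homomorphism of Lie superalgebras, and by the universal property of the universal enveloping algebra it extends uniquely to a superalgebra homomorphism $U(\gldzb)\to\cD$. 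I expect the main point requiring care to be the sign bookkeeping in the super-Leibniz expansion — keeping track of the common block parities and of the constants $(-1)^{|p+q+r|}$ and confirming that they cancel so that the classical $\gl_d$ identity survives — together with checking that the empty-sum convention in \eqref{vpd-e} matches the vanishing of $\otzi^{k+l}$ in each Takiff summand. Beyond this bookkeeping there is no conceptual obstacle, since within a single block \eqref{vpd-e} is the classical $\gld$ oscillator realization up to an overall sign.
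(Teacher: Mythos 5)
Your proposal is correct and takes essentially the same route as the paper's proof: the cross-block brackets vanish on both sides for trivial reasons (disjoint variables, even images), and the within-block brackets are verified by the standard oscillator contraction $[\pa_{y^a_{r+k}},y^{d}_s]=\de_{a,d}\de_{r+k,s}$, with the quartic terms cancelling and the empty-sum convention matching the truncation $\otzi^{k+l}=0$. The only blemish is an overall sign slip in your displayed contractions: the two surviving sums should be $+\de_{a,d}\sum_r y^b_r\pa_{y^c_{r+k+l}}$ and $-\de_{b,c}\sum_s y^d_s\pa_{y^a_{s+k+l}}$, which then read back through \eqref{vpd-e} as $\de_{b,c}\vpd(e_{ad}\otimes\otzi^{k+l})-\de_{a,d}\vpd(e_{cb}\otimes\otzi^{k+l})$ as required.
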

\begin{proof}
It suffices to verify that $\vpd([A, B])=[\vpd (A), \vpd (B)]$ for all elements $A$ and $B$ on the left-hand side of \eqref{vpd-e}.
Clearly,
for  $0 \le k \le \jp_i-1$, $0 \le l \le \jp_j-1$, $1 \le i \le p^\prime+q^\prime$, $p^\prime+q^\prime+1 \le j \le \el$ and $1 \le a, b, a', b' \le d$,
$$
\vpd \! \lb  \ls e_{ab}\otimes \otzi^k, e_{a' b'}\otimes  \otzj^l \rs \rb=0 = \ls\vpd \! \lb  e_{ab}\otimes \otzi^k \rb, \vpd \! \lb  e_{a' b'}\otimes  \otzj^l \rb \rs.
$$
Now for $0 \le k \le \jp_i-1$, $0 \le l \le \jp_j-1$, $1 \le i, j \le p^\prime+q^\prime$ and $1 \le a, b, a', b' \le d$, we have
\begin{eqnarray*}
\vpd \! \lb  \ls e_{ab}\otimes \otzi^k, e_{a' b'}\otimes \otzj^l \rs \rb
&=&\de_{i,j} \vpd \! \lb  \ls e_{ab}, e_{a' b'} \rs \otimes \otzi^{k+l} \rb \\
&=&\de_{i,j} \vpd \! \lb  (\de_{a',b}  e_{a b'}-\de_{a,b'} e_{a' b}) \otimes \otzi^{k+l} \rb \\
&=&\de_{i,j}  \lb - \de_{a',b} \sum_{r=\rva_i+1}^{\rva_{i+1}-k-l}  y^{b'}_r \pa_{y^a_{r+k+l}} + \de_{a,b'} \sum_{r=\rva_i+1}^{\rva_{i+1}-k-l}  y^b_r \pa_{y^{a'}_{r+k+l}} \rb.
\end{eqnarray*}
Meanwhile,
\begin{eqnarray*}
& & \ls \vpd(e_{ab}\otimes \otzi^k), \vpd(e_{a' b'}\otimes \otzj^l)\rs \\
& & \hspace{1.5cm} =  \ls \sum_{r=\rva_i+1}^{\rva_{i+1}-k}  y^b_r \pa_{y^a_{r+k}}, \sum_{s=\rva_j+1}^{\rva_j+\jp_j-l}  y^{b'}_s \pa_{y^{a'}_{s+l}} \rs \\
& & \hspace{1.5cm} =  \de_{i,j} \sum_{r=\rva_i+1}^{\rva_{i+1}-k} \sum_{s=\rva_i+1}^{\rva_{i+1}-l}  \ls  y^b_r \pa_{y^a_{r+k}},  y^{b'}_s \pa_{y^{a'}_{s+l}} \, \rs\\
& & \hspace{1.5cm} =  \de_{i,j} \sum_{r=\rva_i+1}^{\rva_{i+1}-k} \sum_{s=\rva_i+1}^{\rva_{i+1}-l}  \lb y^b_r \pa_{y^a_{r+k}} y^{b'}_s \pa_{y^{a'}_{s+l}} - y^{b'}_s \pa_{y^{a'}_{s+l}} y^b_r \pa_{y^a_{r+k}} \rb \\
& & \hspace{1.5cm} =  \de_{i,j} \sum_{r=\rva_i+1}^{\rva_{i+1}-k} \sum_{s=\rva_i+1}^{\rva_{i+1}-l}   \lb y^b_r  \ls \pa_{y^a_{r+k}}, y^{b'}_s \, \rs \pa_{y^{a'}_{s+l}} - y^{b'}_s \ls \pa_{y^{a'}_{s+l}}, y^b_r \, \rs \pa_{y^a_{r+k}} \rb \\
& & \hspace{1.5cm} =  \de_{i,j} \lb \sum_{r=\rva_i+1}^{\rva_{i+1}-k-l} \de_{a,b'} y^b_r  \pa_{y^{a'}_{r+k+l}} - \sum_{s=\rva_i+1}^{\rva_{i+1}-k-l} \de_{a',b} y^{b'}_s  \pa_{y^a_{s+k+l}} \rb.
\end{eqnarray*}
This gives
$$
\vpd \! \lb  \ls e_{ab}\otimes \otzi^k, e_{a' b'}\otimes \otzj^l \rs \rb
=\ls \vpd \! \lb  e_{ab}\otimes \otzi^k), \vpd(e_{a' b'}\otimes \otzj^l \rb \rs.
$$
We can show similarly that for
$0 \le k \le \jp_i-1$, $0 \le l \le \jp_j-1$, $p^\prime+q^\prime+1 \le i, j \le \el$ and $1 \le a, b, a', b' \le d$,
$$
\vpd \! \lb  \ls e_{ab}\otimes t_{z_{i}}^k, e_{a' b'}\otimes \otzj^l \rs \rb
=\ls \vpd \! \lb  e_{ab}\otimes t_{z_{i}}^k \rb, \vpd \! \lb  e_{a' b'}\otimes \otzj^l \rb \rs.
$$
This completes the proof.
\end{proof}

Recall the symbol $\ds=\pqmn$.
For $1 \le a \le \dd$, define
\begin{equation} \label{sigma}
\Sig^{(a)}=\setc*{x^\al_{i}, y^\al_{r}}{ d_a+1 \le \al \le d_{a+1}, 1\le i \le m+ n, 1\le r \le p+q}.
\end{equation}
We have
\begin{equation} \label{Fock-d}
\F = \bigotimes_{a=1}^\dd  \C [\Sig^{(a)} ].
\end{equation}
The following proposition induces an action of $U \big(\glsw \big)$ on $\F$.

\begin{prop} \label{glr-map}
There is a Lie superalgebra homomorphism $\vps : \glsw \longrightarrow \cD$ defined by
\begin{equation} \label{vps-e}
{\everymath={\disp}
\begin{array}{cll}
E^r_s\otimes \otwa^k &\mapsto \sum_{\al=\va_a+1}^{\va_{a+1}-k}  (-1)^{|s|+1}  \pa_{y^{\al+k}_r} y^\al_s, \\
E^r_{p+q+j}\otimes \otwa^k &\mapsto \sum_{\al=\va_a+1}^{\va_{a+1}-k}  \pa_{y^{\al+k}_r} \pa_{x^\al_j}, \\
E^{p+q+i}_s \otimes \otwa^k &\mapsto \sum_{\al=\va_a+1}^{\va_{a+1}-k}   (-1)^{|s|+1} x^{\al+k}_i y^\al_s,\\
E^{p+q+i}_{p+q+j}\otimes \otwa^k  &\mapsto \sum_{\al=\va_a+1}^{\va_{a+1}-k}   x^{\al+k}_i \pa_{x^\al_j},
\end{array}}
\end{equation}
for $0 \le k \le \bjp_a-1$, $1 \le a \le \dd$, $1 \le r, s \le p+q$ and $1 \le i, j \le m+n$. Thus, $\vps$ extends to a superalgebra homomorphism
$\vps : U \big(\glsw \big) \longrightarrow \cD$.

\end{prop}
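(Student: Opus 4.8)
The plan is to establish \propref{glr-map} by the same direct computation used for \propref{gld-map}. Since the elements appearing in \eqref{vps-e} span $\glsw$ and $\cD$ is an associative superalgebra, it suffices to check that $\vps$ is parity-preserving and that $\vps([A,B])=[\vps(A),\vps(B)]$ for every pair of such elements; the extension to $U(\glsw)$ is then automatic. Parity-preservation will be read off directly from the parity conventions for $x^\al_i$, $y^\al_r$ and their derivatives recalled in \secref{Fock}: in each of the four lines of \eqref{vps-e} the image of $E^i_j\otimes\otwa^k$ is a scalar multiple of a product of two homogeneous elements of $\cD$ whose parities add up to $|i|+|j|$, the parity of $E^i_j$. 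For the bracket identity I would first use that $\glsw=\bigoplus_{a=1}^{\dd}\gl_\pqmn[\twa]/\twa^{\bjp_a}\gl_\pqmn[\twa]$ is a direct sum of Lie superalgebras: if $A$ and $B$ come from different summands then $[A,B]=0$, while $\vps(A)$ and $\vps(B)$ involve the disjoint, mutually (super)commuting families of variables $\Sig^{(a)}$ and $\Sig^{(a')}$ of \eqref{sigma}, so $[\vps(A),\vps(B)]=0$ as well.

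The remaining case is $A=E^i_j\otimes\otwa^k$ and $B=E^r_s\otimes\otwa^l$ lying in the same summand $a$, with $0\le k,l\le\bjp_a-1$. Writing $N=\bjp_a=\va_{a+1}-\va_a$ and using only the canonical relations $[\pa_{y^\al_r},y^\be_s]=\de_{\al\be}\de_{rs}$ and $[\pa_{x^\al_i},x^\be_j]=\de_{\al\be}\de_{ij}$ in $\cD$ (with the Weyl-superalgebra signs), together with the fact that all other pairs of generators of $\cD$ (super)commute, the supercommutator $[\vps(A),\vps(B)]$ reduces, after the two surviving index-contractions, to a $\C$-linear combination of single sums over an index $\al\in\{\va_a+1,\ldots,\va_{a+1}\}$ with the shift $k+l$; one then matches this against $\vps\bigl(\de_{j,r}E^i_s\otimes\otwa^{k+l}-(-1)^{(|i|+|j|)(|r|+|s|)}\de_{i,s}E^r_j\otimes\otwa^{k+l}\bigr)=\vps([A,B])$. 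When $k+l\ge N$ the surviving summation range is empty, so both sides vanish, in accordance with $\twa^{\bjp_a}\gl_\pqmn[\twa]=0$ in the Takiff quotient. Splitting according to where the row and column indices of $E^i_j$ and $E^r_s$ fall ($\le p+q$ versus $>p+q$) reduces this to a finite list of subcases, each a short manipulation entirely parallel to the computations displayed in the proof of \propref{gld-map}.

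The only real difficulty will be the sign bookkeeping. In contrast with \propref{gld-map}, here $\cD$ is a genuine Weyl \emph{super}algebra mixing bosonic oscillators ($x^\al_i$ with $i\le m$, $y^\al_r$ with $r\le p$) and fermionic ones, and the target relations for $\gl_\pqmn$ carry the factor $(-1)^{(|i|+|j|)(|r|+|s|)}$; one must verify that the signs $(-1)^{|s|+1}$ built into \eqref{vps-e} together with the Koszul signs produced when commuting homogeneous elements of $\cD$ past one another combine, in every subcase, to reproduce exactly this factor. This is where the parity-preservation of $\vps$ is needed, as it makes all those Koszul signs unambiguous. To economize one may note that $\otwa$ does not actually occur on the right-hand side of \eqref{vps-e}, so that the restriction of $\vps$ to the $a$-th summand is, after relabelling $\otwa$ as a formal even variable $s$, the truncation to $\gl_\pqmn[s]/s^{\bjp_a}\gl_\pqmn[s]$ of a single $\Z_{\ge 0}$-graded oscillator realization $\gl_\pqmn[s]\to\cD$ on $\bjp_a$ copies of the oscillators; then proving \propref{glr-map} amounts to checking that this one realization is a homomorphism annihilating $s^{\bjp_a}\gl_\pqmn[s]$, which is the same computation.
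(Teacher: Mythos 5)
Your proposal is correct and follows essentially the same route as the paper: a direct verification of $\vps([A,B])=[\vps(A),\vps(B)]$ on the spanning set \eqref{vps-e}, reducing to the canonical supercommutation relations of the Weyl superalgebra, with the cross-summand cases trivial and the same-summand cases handled by index contraction with shift $k+l$ (the paper likewise works out only two representative identities, \eqref{ovphi-e1} and \eqref{ovphi-e2}, and declares the rest analogous). Your observations about the empty summation range when $k+l\ge\bjp_a$ matching the Takiff truncation, and about the signs $(-1)^{|s|+1}$ interacting with the Koszul signs and the factor $(-1)^{(|i|+|j|)(|r|+|s|)}$, are exactly the points the paper's displayed computations confirm.
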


\begin{proof}

We need to verify that $\vps([A, B])=\ls \, \vps (A), \vps (B) \, \rs$ for all elements $A$ and $B$ on the left-hand side of \eqref{vps-e}.
Clearly, for $0 \le k \le \bjp_a-1$, $0 \le l \le \bjp_b-1$, $1 \le a, b \le \dd$, $1 \le r, s \le p+q$ and $1 \le i, j \le m+n$,
$$
\vps \! \lb  \ls E^r_s\otimes \otwa^k, E^{p+q+i}_{p+q+j}\otimes \otwa^l \rs \rb=0 = \ls \vps \! \lb  E^r_s\otimes \otwa^k \rb, \vps \! \lb  E^{p+q+i}_{p+q+j}\otimes \otwa^l \rb \rs.
$$
We will only verify that for $0 \le k \le \bjp_a-1$, $0 \le l \le \bjp_b-1$, $1 \le a, b \le \dd$, $1 \le r, s, s' \le p+q$ and $1 \le i, j \le m+n$,
\begin{equation} \label{ovphi-e1}
\vps \! \lb  \ls E^r_s\otimes \otwa^k , E^{s'}_{p+q+j} \otimes \otwb^l  \rs \rb= \ls\vps \! \lb  E^r_s\otimes \otwa^k \rb, \vps \! \lb  E^{s'}_{p+q+j} \otimes \otwb^l \rb \rs
\end{equation}
and
\begin{equation} \label{ovphi-e2}
\vps \! \lb  \ls E^r_{p+q+j}\otimes \otwa^k , E^{p+q+i}_s \otimes \otwb^l \rs \rb= \ls\vps \! \lb  E^r_{p+q+j}\otimes \otwa^k \rb, \vps \! \lb  E^{p+q+i}_s \otimes \otwb^l \rb \rs \!.
\end{equation}
The verification of all other identities can be carried out analogously.
Write $|\ov{i}|=|p+q+i|$ for $1 \le i \le m+n$.
Let us prove \eqref{ovphi-e1}. We have
\begin{eqnarray*}
\vps \! \lb  \ls E^r_s\otimes \otwa^k , E^{s'}_{p+q+j} \otimes \otwb^l  \rs  \rb
&=&\de_{a,b} \vps \! \lb  \ls E^r_s, E^{s'}_{p+q+j} \rs \otimes \otwa^{k+l} \rb \\
&=&\de_{a,b} \de_{s, s'} \vps \! \lb  E^r_{p+q+j}  \otimes \otwa^{k+l} \rb\\
&=&\de_{a,b} \de_{s, s'}  \sum_{\al=\va_a+1}^{\va_{a+1}-k-l}  \pa_{y^{\al+k+l}_r} \pa_{x^\al_j}.
\end{eqnarray*}
Meanwhile,
\begin{eqnarray*}
& & \ls \vps(E^r_s\otimes \otwa^k), \vps(E^{s'}_{p+q+j} \otimes \otwb^l )\rs \\
&=& \ls \sum_{\al=\va_a+1}^{\va_{a+1}-k}  (-1)^{|s|+1}  \pa_{y^{\al+k}_r} y^\al_s, \sum_{\be=\va_b+1}^{\va_{b+1}-l}  \pa_{y^{\be+l}_{s'}} \pa_{x^\be_j} \rs \\
&=& \de_{a,b}\sum_{\al=\va_a+1}^{\va_{a+1}-k} \sum_{\be=\va_a+1}^{\va_{a+1}-l}   (-1)^{|s|+1}  \ls  \pa_{y^{\al+k}_r} y^\al_s,  \pa_{y^{\be+l}_{s'}} \pa_{x^\be_j} \rs\\
&=& \de_{a,b}\sum_{\al=\va_a+1}^{\va_{a+1}-k} \sum_{\be=\va_a+1}^{\va_{a+1}-l}   (-1)^{|s|+1}  \lb \pa_{y^{\al+k}_r} y^\al_s \pa_{y^{\be+l}_{s'}} \pa_{x^\be_j}
- (-1)^{(|r|+|s|)(|s'|+|\ov{j}|)}\pa_{y^{\be+l}_{s'}} \pa_{x^\be_j}  \pa_{y^{\al+k}_r} y^\al_s  \rb \\
&=& \de_{a,b}\sum_{\al=\va_a+1}^{\va_{a+1}-k} \sum_{\be=\va_a+1}^{\va_{a+1}-l}   (-1)^{|s|+1}  \lb \pa_{y^{\al+k}_r} \ls y^\al_s,  \pa_{y^{\be+l}_{s'}} \rs \pa_{x^\be_j}  \rb\\
&=& \de_{a,b} \de_{s, s'} \sum_{\be=\va_a+1}^{\va_{a+1}-k-l}  \pa_{y^{\be+k+l}_r} \pa_{x^\be_j}.
\end{eqnarray*}
This completes the proof of \eqref{ovphi-e1}.
Now we prove \eqref{ovphi-e2}. We find that
\begin{eqnarray*}
& & \vps \! \lb  \ls E^r_{p+q+j}\otimes \otwa^k , E^{p+q+i}_s \otimes \otwb^l \rs \rb \\
&=&\de_{a,b} \vps \! \lb  \ls E^r_{p+q+j}, E^{p+q+i}_s \, \rs \otimes \otwa^{k+l} \rb \\
&=&\de_{a,b} \vps \! \lb   \lb \de_{i, j} E^r_s- (-1)^{(|r|+|\ov{j}|)(|s|+|\ov{i}|)}  \de_{r, s}  E^{p+q+i}_{p+q+j} \rb \otimes \otwa^{k+l} \rb\\
&= &  \de_{a,b} \Big( \de_{i, j} \sum_{\al=\va_a+1}^{\va_{a+1}-k-l}  (-1)^{|s|+1}  \pa_{y^{\al+k+l}_r} y^\al_s - (-1)^{(|r|+|\ov{j}|)(|s|+|\ov{i}|)}  \de_{r, s} \sum_{\be=\va_a+1}^{\va_{a+1}-k-l}   x^{\be+k+l}_i \pa_{x^\be_j} \Big).
\end{eqnarray*}
Meanwhile,
\begin{eqnarray*}
& & \ls  \vps( E^r_{p+q+j}\otimes \otwa^k),  \vps (E^{p+q+i}_s \otimes \otwb^l) \rs \\
&=& \ls  \sum_{\al=\va_a+1}^{\va_{a+1}-k}  \pa_{y^{\al+k}_r} \pa_{x^\al_j}, \sum_{\be=\va_b+1}^{\va_{b+1}-l}   (-1)^{|s|+1} x^{\be+l}_i y^\be_s \rs \\
&=& \de_{a,b}\sum_{\al=\va_a+1}^{\va_{a+1}-k} \sum_{\be=\va_a+1}^{\va_{a+1}-l}   (-1)^{|s|+1}  \ls  \pa_{y^{\al+k}_r} \pa_{x^\al_j},   x^{\be+l}_i y^\be_s \rs\\
&=& \de_{a,b}\sum_{\al=\va_a+1}^{\va_{a+1}-k} \sum_{\be=\va_a+1}^{\va_{a+1}-l}   (-1)^{|s|+1} \lb \pa_{y^{\al+k}_r} \pa_{x^\al_j} x^{\be+l}_i y^\be_s -  (-1)^{(|r|+|\ov{j}|)(|s|+|\ov{i}|)}x^{\be+l}_i y^\be_s  \pa_{y^{\al+k}_r} \pa_{x^\al_j}  \rb \\
&=& \de_{a,b}\sum_{\al=\va_a+1}^{\va_{a+1}-k} \sum_{\be=\va_a+1}^{\va_{a+1}-l}   (-1)^{|s|+1} \Big( \pa_{y^{\al+k}_r} \ls \pa_{x^\al_j}, x^{\be+l}_i  \, \rs y^\be_s \\
& & \hspace{6cm} - (-1)^{(|r|+|\ov{j}|)(|s|+|\ov{i}|)}x^{\be+l}_i \ls y^\be_s,  \pa_{y^{\al+k}_r} \rs \pa_{x^\al_j}  \Big) \\
&=& \! \de_{a,b} \Big( \de_{i,j} \sum_{\be=\va_a+1}^{\va_{a+1}-k-l}  (-1)^{|s|+1}  \pa_{y^{\be+k+l}_r} y^\be_s \\
& & \hspace{4.5cm}  - (-1)^{(|r|+|\ov{j}|)(|s|+|\ov{i}|)}  \de_{r, s} \sum_{\al=\va_a+1}^{\va_{a+1}-k-l}   x^{\al+k+l}_i \pa_{x^\al_j} \Big) \!.
\end{eqnarray*}
This completes the proof of \eqref{ovphi-e2}.
\end{proof}

\sloppy
The pair $(\vpd, \vps)$ of homomorphisms induce a joint action of $\Adzw$ and $\Aswz$ on the Fock space $\F$.
We will see, in \secref{gld-gls-duality}, that $(\vpd, \vps)$ gives a duality of $(\gld, \gls)$, which is an equivalence between the actions of $\Adzw$ and $\Aswz$ on $\F$.

\begin{rem}
In the special case where $\bjjp=(1^\dd)$ and $\jjp=(1^\el)$, a variant of the pair $(\vpd, \vps)$ was considered in \cite[pp. 789--790]{CLZ} and \cite[Section 4.3]{ChL25-2}. 
It gives rise to the Howe duality of $(\gld, \gls)$ (see \cite[Theorem 3.3]{CLZ}).
\end{rem}

\subsection{Examples}\label{ex}
In this subsection, we will give examples of $\glsw$-modules that will be used in \secref{app}.

Let 
$$
\dds=\sum_{a=1}^{d} \lb -\sum_{r=1}^{p+q}  y^a_r \pa_{y^{a}_r}+\sum_{i=1}^{m+n} x^{a}_i \pa_{x^a_i} \rb \!.
$$ 
For any monomial $f \in \F$, we have $\dds(f)=k f$ for some $k \in \Z$, and we define the degree of $f$ to be $k$. 
In particular, the degree of each ${x^a_i}$ is $1$, and the degree of each $y^a_r$ is $-1$. For this reason, it is natural to call $\dds$ the degree operator on $\F$.

By definition, $\gl_\ds^{(d)}[t]=\gls(0, d)$, and by \propref{glr-map}, the Fock space $\F$ is a $\gltd$-module.
For each $k\in\Z$, let $V^{(d)}_k$ denote the subspace of $\F$ spanned by monomials of degree $k$. Since $\dds$ commutes with the action of $\gltd$ on $\F$, the space $V^{(d)}_k$ is a $\gltd$-module. 
We have a decomposition of $\F$ into a direct sum of $\gltd$-modules
\begin{equation} \label{F-decomp-1}
\F=\bigoplus_{k\in \Z}V^{(d)}_k.
\end{equation}

For $1 \le a \le \dd$, recall $\Sig^{(a)}$ defined in \eqref{sigma}. The Fock space $\C [\Sig^{(a)}]$ is a $\glswa$-module. Analogous to \eqref{F-decomp-1}, there is also a direct sum decomposition of $\glswa$-modules
$$
\C [\Sig^{(a)}]=\bigoplus_{k\in \Z} V_k^{(\xi_a)},
$$
where $V_k^{(\xi_a)}$ is the subspace of $\C [\Sig^{(a)} ]$ spanned by monomials of degree $k$. 
Define 
\begin{equation} \label{Vk}
\Vk=V^{(\xi_1)}_{k_1}  \otimes \cdots \otimes V^{(\xi_{\dd})}_{k_{\dd}} \qquad  \mbox{for \, $k_1, \ldots, k_{\dd} \in \Z$}.
\end{equation}
By \propref{glr-map} and \eqref{Fock-d}, $\Vk$ is a $\glsw$-module, and we have a direct sum decomposition of $\glsw$-modules
\begin{equation} \label{F-decomp}
\F= \bigoplus_{k_1, \ldots, k_{\dd} \in \Z}  \Vk.
\end{equation}

\subsection{A duality of $(\gld, \gl_\pqmn)$} \label{gld-gls-duality}
The $k \times k$ matrix
$$
\J_{k}(\la):=
\begin{bmatrix}
\la & 1 & \ldots & 0\\
0 & \la & \ddots & \vdots \\
\vdots & \ddots & \ddots & 1\\
0 & \ldots & 0 & \la
\end{bmatrix}
$$
is called a Jordan block with eigenvalue $\la$.
If $\la \not=0$, then
\begin{equation} \label{J-inv}
\big(-\J_{k}(-\la) \big)^{-1}=\begin{bmatrix}
\la^{-1} & \la^{-2} & \ldots & \la^{-k} \\
0 & \la^{-1} & \ddots & \vdots \\
\vdots & \ddots & \ddots & \la^{-2} \\
0 & \ldots & 0 & \la^{-1}
\end{bmatrix}.
\end{equation}

Consider the Jordan matrix
$$
 \J_\bjjp (\w):=\bigoplus_{a=1}^{\dd} \J_{\bjp_a}(w_a):=\begin{bmatrix}
\J_{\bjp_1}(w_1) & 0 & \ldots & 0\\
0 &\J_{\bjp_2}(w_2)  & \ddots & \vdots \\
\vdots & \ddots & \ddots & 0\\
0 & \ldots & 0 & \J_{\bjp_\dd}(w_\dd)
\end{bmatrix},
$$
which is the direct sum of Jordan blocks.
Define $\mu_{\bjjp}^{\w} \in \gld^*$ by
\begin{equation} \label{muw}
\mu_{\bjjp}^{\w} (e_{ab}) =  -\J_\bjjp (\w)_{a,b} \qquad \mbox{for $a,b=1,\ldots,d$}.
\end{equation}
Here $\J_\bjjp (\w)_{a,b}$ denotes the $(a,b)$ entry of $\J_\bjjp (\w)_{a,b}$.

Let
$$
 \J_{\jjp} (\z)=\bigoplus_{i=1}^{\el} \J_{\jp_i}(z_i)
$$
Define $\nu_{\jjp}^{\z} \in \gls^*$ by
\begin{equation} \label{ovmuz}
\hspace{1cm} \nu_{\jjp}^{\z} (E^i_j)= (-1)^{|i|+1} \J_{\jjp} (\z)_{i,j}  \qquad \mbox{for $i, j \in \I$}.
\end{equation}
Note that $\nu_{\jjp}^{\z}$ vanishes on the odd part of $\gls$.

We write
\begin{equation} \label{Gaudin-Jordan}
\Adzw=\cA_d^{\mu_{\bjjp}^{\w}} (\z, \jjp) \quad \text{and} \quad \Arwz=\cA_\ds^{\nu_{\jjp}^{\z}}(\w, \bjjp).
\end{equation}
We also write
$$
\wLdzw=\wLdmu  \quad \text{and} \quad \Lrwz=\Lr^{\nu_{\jjp}^{\z}}(\w, \bjjp),
$$
where the matrices $\wLdmu$ and $\Lr^{\nu_{\jjp}^{\z}}(\w, \bjjp)$ are defined as in \eqref{wtL} and \eqref{Lsmu}.

Fix $\s:=(0^p, 1^q, 0^m, 1^n) \in \cS_\ds$. As discussed earlier, the Gaudin algebras $\Adzw$ and $\Arwz$ are determined by $\cdet \!  \lb \wLdzw \rb$ and $\Bers \!  \lb \Lrwz \rb$, respectively.

The maps $\vpd$ and $\vps$, given in \propref{gld-map} and \propref{glr-map}, extend naturally to the superalgebra homomorphisms
$$
\vpd: U \big(\gldzb \big) \zpz \longrightarrow \cD \zpz,
$$
and
$$
\vps: U \big(\glsw \big)\zpz \longrightarrow \cD \zpz,
$$
respectively.
For $1 \le i \le \el$, define
\begin{equation} \label{jp}
[\jp_i]=\begin{cases}
\,   \jp_i, &  \mbox{if} \ \  1 \le i \le p^\prime \quad \text{or} \quad  p^\prime+q^\prime+1 \le i \le p^\prime+q^\prime+m^\prime;\\
-\jp_i, &  \mbox{otherwise}.
\end{cases}
\end{equation}

\begin{thm} \label{cdet-Ber}
We have 
$$
 \prod_{i=1}^{\el} (\pz-z_i)^{[\jp_i]}  \cdot \vpd \! \lb  \cdet \!  \lb \wLdzw  \rb \rb = \prod_{a=1}^{\dd} (z-w_a)^{\bjp_a} \cdot  \vps \! \lb  \Bers \!  \lb \Lrwz \rb \rb
$$
as an equality between elements of $\cD \zpz$.
\end{thm}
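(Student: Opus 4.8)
The idea is to realize the two sides as the two complementary block evaluations of the Berezinian of a single matrix, using \propref{decomp} and \propref{decomp-2}. Set $N=p+q+m+n$ and work in the pseudo-differential superalgebra $\cD\zpz$, in which $(\pz-z_i)^{-1}$ and $(z-w_a)^{-1}$ are honest elements and the generators of $\cD$ commute with $z,\pz$. Consider the $(d+N)\times(d+N)$ block matrix
$$
\bd A=\begin{bmatrix} W & X\\ Y & Z\end{bmatrix},\qquad W=z\,I_d-\J_{\bjjp}(\w)^t,\qquad Z=\pz\,I_N-\J_{\jjp}(\z),
$$
where the last $N$ rows/columns are identified with $\I$, split into the $y$-indices $1,\dots,p+q$ and the $x$-indices $p+q+1,\dots,N$, and
$$
X_{a\mu}=\begin{cases}(-1)^{|\mu|}\,y^a_\mu,&\mu\le p+q,\\[3pt]-\,\pa_{x^a_{\mu-p-q}},&\mu> p+q,\end{cases}\qquad
Y_{\mu a}=\begin{cases}(-1)^{|\mu|+1}\,\pa_{y^a_\mu},&\mu\le p+q,\\[3pt](-1)^{|\mu|+1}\,x^a_{\mu-p-q},&\mu> p+q,\end{cases}
$$
for $1\le a\le d$ and $\mu\in\I$. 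I would give $\bd A$ the type $\hat\s=(0^{d+p},1^q,0^m,1^n)$, the $0^{d+p+m}1^{q+n}$-sequence obtained from $\s$ by prepending $d$ zeros; then $W$ has type $(0^d)$ and $Z$ has type $\s$.

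First I would verify that $\bd A$ is an amply invertible Manin matrix of type $\hat\s$. Ample invertibility follows by a leading-term argument, the diagonal entries $z-w_a,\ \pz-z_i$ being invertible in $\cD\zpz$ and dominating the principal quasiminors of every standard submatrix. For the Manin relations, the $2\times2$ submatrices lying entirely inside one of the four blocks are trivial (commuting entries), and the only couplings with content are those between a $W$-entry and a $Z$-entry; these reduce to identities of the form $-1=[Y_{\mu a},X_{a\mu}]$, which hold because $[\pz,z]=1$ together with the Weyl relations $[\pa_{y^a_r},y^a_r]=[\pa_{x^a_i},x^a_i]=1$, and matching them in the four $y/x$ combinations is precisely what dictates the signs displayed above. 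I would also record the two elementary Berezinians: $W$ is lower triangular with commuting entries, so by \propref{CFR} ${\rm Ber}^{(0^d)}(W)=\cdet(W)=\det(W)=\prod_{a=1}^{\dd}(z-w_a)^{\bjp_a}$; and $Z$ is upper triangular, so $d_i(Z)=Z_{ii}$, whence grouping the diagonal of $Z$ by Jordan blocks and using \eqref{rel} (which forces $\s$ to be constant on each Jordan block of $\J_{\jjp}(\z)$) gives $\Bers(Z)=\prod_{i=1}^{\el}(\pz-z_i)^{[\jp_i]}$, with $[\jp_i]$ as in \eqref{jp}.

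Next I would run the two decompositions with $k=d$. By \propref{decomp-2},
$$
{\rm Ber}^{\hat\s}(\bd A)=\Bers(Z)\cdot{\rm Ber}^{(0^d)}\!\big(W-XZ^{-1}Y\big);
$$
expanding $Z^{-1}$ block by block via \eqref{J-inv} and comparing with the $y$- and $x$-type formulas of \propref{gld-map} gives $(XZ^{-1}Y)_{ij}=\sum_{i'=1}^{\el}\sum_{k\ge0}(\pz-z_{i'})^{-k-1}\,\vpd\big(e_{ji}\otimes\ov t_{z_{i'}}^{\,k}\big)$, so that $W-XZ^{-1}Y=\vpd(\wLdzw)$ by \eqref{Pmuz}, \eqref{wtL} and \eqref{muw}. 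As $\wLdzw$, hence $\vpd(\wLdzw)$, is a Manin matrix of type $(0^d)$ (cf. \propref{L'}), \propref{CFR} turns its Berezinian into a column determinant, and we obtain
$$
{\rm Ber}^{\hat\s}(\bd A)=\prod_{i=1}^{\el}(\pz-z_i)^{[\jp_i]}\cdot\vpd\!\big(\cdet\wLdzw\big).
$$
Dually, \propref{decomp} gives ${\rm Ber}^{\hat\s}(\bd A)={\rm Ber}^{(0^d)}(W)\cdot\Bers(Z-YW^{-1}X)$; expanding $W^{-1}$ via \eqref{J-inv} and comparing with \propref{glr-map}, the definition \eqref{Lsmu} of $\Lrwz$ and the value \eqref{ovmuz} of $\nu_{\jjp}^{\z}$ yields $Z-YW^{-1}X=\vps(\Lrwz)$, hence
$$
{\rm Ber}^{\hat\s}(\bd A)=\prod_{a=1}^{\dd}(z-w_a)^{\bjp_a}\cdot\vps\!\big(\Bers(\Lrwz)\big).
$$
Comparing the two expressions for ${\rm Ber}^{\hat\s}(\bd A)$ proves the identity.

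The main obstacle is the sign bookkeeping in the construction of $\bd A$. The $\gld$-side Schur complement forces the transpose $\J_{\bjjp}(\w)^t$ and the parity weights $(-1)^{|p+q+r|}$ of \propref{gld-map}; the $\gls$-side forces the factor $(-1)^{|i|}$ built into $\cT_\ds$ together with the weights $(-1)^{|s|+1}$ of \propref{glr-map}; and the Manin property of $\bd A$ imposes further constraints. The displayed $X,Y$ are the essentially unique assignment reconciling all of these, and establishing this requires the case-by-case comparison sketched above. A secondary nuisance is that $z$ and $\pz$ do not commute, so the $W$--$Z$ Manin relations and the geometric expansions $(\pz-z_i)^{-1}=\sum_{k\ge0}z_i^k\pz^{-1-k}$, $(z-w_a)^{-1}=\sum_{k\ge0}w_a^kz^{-1-k}$ must be handled consistently inside $\cD\zpz$.
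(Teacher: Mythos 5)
Your proposal is correct and is essentially the paper's own argument: the paper forms the same $(d+N)\times(d+N)$ block Manin matrix of type $(0^d,\s)$ (with off-diagonal blocks differing from yours only by an overall sign, which cancels in the Schur complements), computes its Berezinian via \propref{decomp} and \propref{decomp-2} together with \propref{CFR}, and identifies the two Schur complements with $\vpd\big(\wLdzw\big)$ and $\vps\big(\Lrwz\big)$ exactly as you describe.
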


\begin{proof}
Let
$$
J=\bigoplus_{a=1}^{\dd} \big(\! - \J_{\bjp_a}(w_a-z) \big) \quad \text{and} \quad  J^\prime=\bigoplus_{i=1}^{\el} \big(\! -\J_{\jp_i}(z_i-\pz) \big).
$$
By \eqref{Pmuz}, we find that
\begin{equation} \label{wLdzw}
 \wLdzw =  J^t - \ls \sum_{i=1}^{\el} \sum_{k=0}^{\jp_i-1}  \frac{e_{ab}\otimes \otzi^k}{(\pz-z_i)^{k+1}} \rs^t_{\! a, b=1, \ldots, d}
\end{equation}
and
\begin{equation} \label{Lswz}
\Lrwz= J^\prime- \ls (-1)^{|i|} \sum_{a=1}^{\dd} \sum_{k=0}^{\bjp_a-1}  \frac{E^i_j \otimes \otwa^k}{(z-w_a)^{k+1}} \rs_{\! i,j \in \I}.
\end{equation}

Consider the $(m+n) \times d$ matrices
$$
X:=\big[ (-1)^{|p+q+i|} \xai \big]^{a=1, \ldots, d}_{i=1, \ldots, m+n} \quad \text{and} \quad P_X:=\big[ \pxai \big]^{a=1, \ldots, d}_{i=1, \ldots, m+n}
$$
and the $(p+q) \times d$ matrices
$$
Y:=\big[ (-1)^{|r|+1} \yar \big]^{a=1, \ldots, d}_{r=1, \ldots, p+q} \qquad \text{and} \qquad  P_Y:=\big[  (-1)^{|r|} \pyar \big]^{a=1, \ldots, d}_{r=1, \ldots, p+q}.
$$
Write
$$
M=
\begin{bmatrix}
Y^t \hspace{2mm} P_X^t
\end{bmatrix}
\quad \text{and}  \quad
M^\prime=
\begin{bmatrix} P_Y  \vspace{3mm}  \\ X
\end{bmatrix}.
$$
Let
$$
\fL=
\begin{bmatrix}
J^t & M \vspace{2mm}
 \\ M^\prime & J^\prime
\end{bmatrix}.
$$
It is straightforward to check that $\fL$ is an amply invertible Manin matrix of type $\hat{\s}:=(0^d, \s) \in \cS_{d+p+m|q+n}$.
By \propref{decomp} and \propref{CFR}, we have
$$\Ber^{\! \hat{\s}} \, (\fL)=\cdet (J^t) \cdot \Bers (J^\prime- M^\prime (J^t)^{-1} M).$$
We see that
\begin{eqnarray*}
\vps \! \lb  \Bers \!  \lb \Lrwz \rb \rb
&=&\Bers \lbb J'- \ls (-1)^{|i|} \sum_{a=1}^{\dd} \sum_{k=0}^{\bjp_a-1}  \frac{\vps \big( E^i_j \otimes \otwa^k \big)}{(z-w_a)^{k+1}} \rs_{\! i,j \in \I} \rb\\
&=&\Bers (J'- M^\prime (J^t)^{-1} M).
\end{eqnarray*}
Here the first equality follows from \eqref{Lswz} while the second follows from \propref{glr-map} and \eqref{J-inv}.
Consequently,
$$
\Ber^{\! \hat{\s}} \, (\fL)=  \prod_{a=1}^{\dd} (z-w_a)^{\bjp_a}  \cdot \vps \lb \Bers \! \big( \Lrwz \big) \rb\!.
$$
On the other hand, as $J^t- M (J^\prime)^{-1} M^\prime$ is a sufficiently invertible Manin matrix of type $(0^d)$, \propref{decomp-2} and \propref{CFR} imply that
$$
\Ber^{\! \hat{\s}} \, (\fL) =
\Bers (J^\prime) \cdot \cdet (J^t- M (J^\prime)^{-1} M^\prime).
$$
By \eqref{J-inv}, \eqref{wLdzw} and \propref{gld-map},
\begin{eqnarray*}
\Ber^{\! \hat{\s}} \, (\fL)
&=& \prod_{i=1}^{\el} (\pz-z_i)^{[\jp_i]} \cdot \cdet  \lb  J^t - \ls \sum_{i=1}^{\el} \sum_{k=0}^{\jp_i-1}  \frac{\vpd (e_{a,b}\otimes \otzi^k)}{(\pz-z_i)^{k+1}} \rs^t_{\! a, b=1, \ldots, d} \rb\\
&=& \prod_{i=1}^{\el} (\pz-z_i)^{[\jp_i]} \cdot \vpd \! \lb  \cdet \!  \lb \wLdzw  \rb \rb.
\end{eqnarray*}
This proves the theorem.
\end{proof}

We have the following duality between the Gaudin algebras $\Adzw$ and $\Arwz$. We call it the \emph{duality of $(\gld, \gls)$ for Gaudin models with irregular singularities}.

\begin{thm}\label{Gaudin-duality}
$
\vpd \big( \Adzw \big)=\vps \big(\Arwz \big).
$
\end{thm}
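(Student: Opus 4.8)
The plan is to derive \thmref{Gaudin-duality} from \thmref{cdet-Ber} together with the descriptions of the two Gaudin algebras in terms of column determinants and Berezinians. Recall that $\Adzw$ is generated by the coefficients of the series $\wh a_i(\pz)$ appearing in the expansion $\cdet \big(\wLdzw\big)=z^d+\sum_{i=1}^d \wh a_i(\pz) z^{i-1}$ (\propref{L'}), and that $\Arwz$ is generated by the coefficients of $b_i(z)$ in $\Bers \big(\Lrwz\big)=\sum_{i\le p+m-q-n} b_i(z)\pz^i$ (\secref{Gaudin-gls}). So it suffices to show that inside $\cD \zpz$ (equivalently $\cD \pzz$, via the isomorphism $\bog$) the subalgebra generated by the coefficients of $\vpd\big(\cdet(\wLdzw)\big)$, expanded as a series in powers of $z$, coincides with the subalgebra generated by the coefficients of $\vps\big(\Bers(\Lrwz)\big)$, expanded as a series in powers of $\pz$.

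First I would observe that the scalar factor $\prod_{i=1}^\el(\pz-z_i)^{[\jp_i]}$ is an invertible element of $\C\pzz$ whose coefficients (and the coefficients of its inverse) are ordinary complex numbers; likewise $\prod_{a=1}^\dd(z-w_a)^{\bjp_a}$ is a polynomial in $z$ with scalar coefficients. Hence multiplying by either factor does not change the subalgebra of $\cD$ generated by the coefficients of a given pseudo-differential operator: the coefficients of $F$ and the coefficients of $cF$, for an invertible scalar series $c$, generate the same subalgebra of $\cD$. Applying this to \thmref{cdet-Ber}, the common element $\Ber^{\hat\s}(\fL)$ (viewed in $\cD\zpz$ or $\cD\pzz$) has coefficients that generate simultaneously $\vpd\big(\Adzw\big)$ and $\vps\big(\Arwz\big)$, giving the desired equality.

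The one point that needs genuine care — and which I expect to be the main obstacle — is bookkeeping the two different expansion conventions. The matrix $\wLdzw$ naturally lives in $U(\gldzb)\pzz$ (operators written with $\pz$ to the left of $z$), while $\Lrwz$ lives in $U(\glsw)\zpz$; the passage between the two is the isomorphism $\bog$ with $\bog(z)=\pz$, $\bog(\pz)=-z$. I would first transport everything to a single algebra, say $\cD\zpz$, using $\bog$, then check that under $\bog$ the coefficients of $\cdet(\wLdzw)$ as a series in $z$ correspond exactly to the generators of $\Adzw$ given by \propref{L'}, and that the scalar prefactors transform to genuine invertible series in the relevant variable. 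Concretely one must confirm that $\prod_i(\pz-z_i)^{[\jp_i]}$, after applying $\bog$, becomes $\prod_i(-z-z_i)^{[\jp_i]}\in\C\blb z\brb$, still invertible with scalar coefficients, and that no spurious dependence on the ``other'' variable is introduced. Once this is verified, the equality of the two generated subalgebras is immediate from \thmref{cdet-Ber}.

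I would also note the asymmetry in the two descriptions: for $\gld$, \remref{complete} guarantees $\Adzw$ is generated by the coefficients of $\Pmuz(S)$ for \emph{all} $S\in\fzd$, but the proof only needs the finitely many generators $\wh a_i(\pz)$ from $\cdet$, so \remref{complete-s}(ii)'s conjectural status for $\gls$ causes no trouble — both algebras are, by definition, generated by the coefficients of a single (pseudo-)differential operator, namely $\cdet(\wLdzw)$ and $\Bers(\Lrwz)$ respectively, and \thmref{cdet-Ber} identifies these two operators up to an invertible scalar factor. Assembling these observations yields $\vpd\big(\Adzw\big)=\vps\big(\Arwz\big)$.
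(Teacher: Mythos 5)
Your proposal is correct and follows exactly the paper's route: the published proof is the single line ``This follows from \propref{L'} and \thmref{cdet-Ber},'' and your argument simply makes explicit the bookkeeping (invertibility of the scalar prefactors, compatibility of the $\zpz$ and $\pzz$ expansions via $\bog$) that the paper leaves implicit. No changes needed.
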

\begin{proof}
This follows from \propref{L'} and \thmref{cdet-Ber}.
\end{proof}

The Weyl superalgebra $\cD$ is a subalgebra of $\End(\F)$. The maps $\vpd$ and $\vps$ induce actions of $\Adzw$ and $\Arwz$ on $\F$. \thmref{Gaudin-duality} also says that these actions are equivalent.

If any of $p,q,m,n$ is set to 0, \thmref{Gaudin-duality} will give a special version of the duality. 
Taking $p=q=n=0$, \thmref{cdet-Ber} gives the identity in \cite[Theorem 4.8]{VY}, and we recover the duality of $(\gld, \gl_m)$ due to Vicedo and Young.

\begin{cor} [{\cite[Theorem 4.8]{VY}}] \label{VY}
Set $p=q=n=0$. Then $\vpd \big( \Adzw \big)=\vps  \lb \cA_{m}^{\z,  \jjp}(\w, \bjjp) \rb$.
\end{cor}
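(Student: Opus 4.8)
The plan is to derive \corref{VY} as the $p=q=n=0$ specialization of \thmref{Gaudin-duality}, after recording how the ambient set-up degenerates. When $p=q=n=0$ the superalgebra $\gls$ becomes the purely even Lie algebra $\gl_m$; the constraints $p^\prime\le p$, $q^\prime\le q$, $n^\prime\le n$ force $p^\prime=q^\prime=n^\prime=0$, so $\el=m^\prime\le m$, and the relations \eqref{rel} collapse to the single identity $\sum_{i=1}^{\el}\jp_i=m$, which is exactly the combinatorial data underlying the bosonic $(\gld,\gl_m)$ duality. The parity $|i|$ is identically $\bz$ on $\I$, hence $\s=(0^m)$; the Fock space $\F$ retains only the even variables $\xai$ with $1\le i\le m$ and $1\le a\le d$ (there are no $\yar$'s, and nothing odd), so $\F=\C[\xai]$ is the bosonic Fock space and $\cD$ is the ordinary Weyl algebra on it.

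First I would observe that, under this specialization, the homomorphisms $\vpd$ and $\vps$ of \propref{gld-map} and \propref{glr-map} reduce to the standard bosonic oscillator realizations of $\gldzb$ and $\glsw$: only the branches built from the $x$-variables survive, and every sign $(-1)^{|i|}$ equals $1$. Since $\s=(0^m)$, \propref{CFR} gives $\Bers=\Ber^{(0^m)}=\cdet$, so $\Bers\big(\Lrwz\big)=\cdet\big(\Lrwz\big)$; moreover, because $p^\prime=q^\prime=n^\prime=0$, one has $[\jp_i]=\jp_i$ for every $i$ by \eqref{jp}. Hence \thmref{cdet-Ber} specializes to
$$
\prod_{i=1}^{\el}(\pz-z_i)^{\jp_i}\cdot\vpd\big(\cdet(\wLdzw)\big)=\prod_{a=1}^{\dd}(z-w_a)^{\bjp_a}\cdot\vps\big(\cdet(\Lrwz)\big),
$$
which is precisely the identity of \cite[Theorem 4.8]{VY}. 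Since \thmref{Gaudin-duality} is itself deduced from \thmref{cdet-Ber} and \propref{L'} (the latter expressing the generators of $\Adzw$ through the coefficients of the expansion of $\cdet(\wLdzw)$ in $z$), feeding in these specialized statements yields $\vpd\big(\Adzw\big)=\vps\big(\cA_m^{\z,\jjp}(\w,\bjjp)\big)$, where $\cA_m^{\z,\jjp}(\w,\bjjp)$ is the $\gl_m$-Gaudin algebra attached, as in \secref{Gaudin-gls} and \eqref{Gaudin-Jordan}, to the Jordan matrix $\J_{\jjp}(\z)$.

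No genuine obstacle arises: the whole content already resides in \thmref{cdet-Ber} and \propref{L'}, both proved above. The only care needed is bookkeeping, namely verifying that the index ranges and the relations \eqref{rel} degenerate consistently, so that the linear functional $\nu_{\jjp}^{\z}$ determined by the Jordan matrix $\J_{\jjp}(\z)$ is the usual one for $\gl_m$, together with the elementary observation, supplied by \propref{CFR}, that in the non-super setting the Berezinian of type $(0^m)$ is nothing but the column determinant.
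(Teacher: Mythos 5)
Your proposal is correct and follows exactly the paper's route: the corollary is obtained as the direct specialization $p=q=n=0$ of \thmref{Gaudin-duality} (equivalently of \thmref{cdet-Ber} combined with \propref{L'}), with the observations that $\s=(0^m)$ turns the Berezinian into a column determinant via \propref{CFR} and that $[\jp_i]=\jp_i$ for all $i$. The bookkeeping you supply (collapse of \eqref{rel}, purely bosonic Fock space, trivial parities) is the same implicit content the paper relies on.
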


\thmref{Gaudin-duality} also gives a fermionic realization of the above duality.

\begin{cor} \label{fermi}
Set $p=q=m=0$. Then $\vpd \big( \Adzw \big)=\vps  \lb \cA_{0|n}^{\z,  \jjp}(\w, \bjjp) \rb$.
\end{cor}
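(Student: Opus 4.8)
The plan is to obtain \corref{fermi} directly from \thmref{Gaudin-duality} by specializing to $p=q=m=0$ (with $n\ge 1$, so that $\gl_{0|n}$ is a genuine Lie superalgebra). First I would unwind the constructions of \secref{Fock} in this case. The constraints $p^\prime\le p$, $q^\prime\le q$, $m^\prime\le m$ force $p^\prime=q^\prime=m^\prime=0$, hence $\el=n^\prime$, and \eqref{rel} reduces to $\sum_{i=1}^{n^\prime}\jp_i=n$. Since $\gl_\pqmn$ becomes $\gl_{0|n}$, the Gaudin algebra $\Arwz$ specializes to $\cA_{0|n}^{\z,\jjp}(\w,\bjjp)$. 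On the Fock side, $\F$ has no $y$-variables (as $p+q=0$) and every $\xai$ with $1\le i\le n$, $1\le a\le d$, is odd (as $m=0$), so $\F$ becomes the Grassmann algebra on the $dn$ odd generators $\xai$, i.e.\ the purely fermionic Fock space. Correspondingly, in \propref{gld-map} and \propref{glr-map} only the second line of \eqref{vpd-e} and the last line of \eqref{vps-e} survive, now with the $\xai$ odd.

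Granting these identifications, the asserted equality $\vpd\big(\Adzw\big)=\vps\big(\cA_{0|n}^{\z,\jjp}(\w,\bjjp)\big)$ is precisely the content of \thmref{Gaudin-duality}, so nothing further is needed. I do not anticipate any real obstacle: none of the arguments in \secref{GM}, in the proof of \thmref{cdet-Ber}, or in the proof of \thmref{Gaudin-duality} uses strict positivity of any of $p,q,m$ (empty index ranges simply produce empty sums, and the block matrix $\fL$ in the proof of \thmref{cdet-Ber} degenerates harmlessly to a $(d+n)\times(d+n)$ matrix of type $(0^d,1^n)$), so the whole chain of reasoning applies verbatim. If desired, one may add, in parallel with the remark preceding \corref{VY}, that \thmref{cdet-Ber} at $p=q=m=0$ produces the fermionic-side analogue of the identity underlying the Vicedo--Young duality, but this is not required for the proof.
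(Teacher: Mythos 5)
Your proposal is correct and takes essentially the same route as the paper: \corref{fermi} is obtained there simply as the specialization of \thmref{Gaudin-duality} to $p=q=m=0$, with no further argument given. Your additional sanity checks (that the constructions of \secref{Fock} and the proofs of \thmref{cdet-Ber} and \thmref{Gaudin-duality} survive empty index ranges, and that $\F$ degenerates to the Grassmann algebra on the $dn$ odd variables $\xai$) are accurate but not needed beyond what the paper already asserts.
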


Taking $q=n=0$ in \thmref{Gaudin-duality}, we obtain the duality of $(\gld, \gl_{m+p})$ for the bosonic oscillators in which the Fock space $\F$ decomposes into a direct sum of tensor products of infinite-dimensional $\gl_{m+p}$-modules.

\begin{cor} \label{bosonic}
Set $q=n=0$. Then $\vpd \big( \Adzw \big)=\vps  \lb \cA_{m+p}^{\z,  \jjp}(\w, \bjjp) \rb$.
\end{cor}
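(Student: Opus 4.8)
The plan is to deduce Corollary~\ref{bosonic} as a special case of \thmref{Gaudin-duality} by tracking what becomes of all the objects when $q=n=0$. When $q=n=0$ we have $\ds=\pqmn$ with zero odd dimension in the first factor, so $\gl_\pqmn=\gl_{p+m}$ is an ordinary general linear Lie algebra, and the Fock space $\F$ contains only the even variables $x^a_i$ ($1\le i\le m$) and $y^a_r$ ($1\le r\le p$); in particular $\F$ is purely bosonic. The Weyl superalgebra $\cD$ reduces to an ordinary Weyl algebra, and the two homomorphisms $\vpd$ and $\vps$ of \propref{gld-map} and \propref{glr-map} specialize without change of form — all the sign factors $(-1)^{|s|+1}$, $(-1)^{|p+q+r|}$, etc., become trivial since every index is even.

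First I would verify that the compatibility conditions \eqref{rel} are consistent with $q=n=0$: they force $q'=n'=0$, hence $\el=p'+m'$, and \eqref{rel} reduces to $\sum_{i=1}^{p'}\jp_i=p$ and $\sum_{i=1}^{m'}\jp_{p'+i}=m$, so $\bjjp$ and $\jjp$ remain valid data for the pair $(\gld,\gl_{p+m})$. Next I would observe that the two Gaudin algebras appearing in \thmref{Gaudin-duality}, namely $\Adzw$ and $\Arwz=\cA_\ds^{\nu_{\jjp}^{\z}}(\w,\bjjp)$, specialize to $\cA_d^{\mu_\bjjp^\w}(\z,\jjp)$ and $\cA_{m+p}^{\nu_\jjp^\z}(\w,\bjjp)$, where the latter is exactly what the statement calls $\cA_{m+p}^{\z,\jjp}(\w,\bjjp)$ in the notation of \eqref{Gaudin-Jordan}. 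Since the definitions of these Gaudin algebras via $\cdet(\wLdzw)$ and $\Bers(\Lrwz)$ are insensitive to setting parameters to zero — one simply has fewer rows and columns — the identity $\vpd(\Adzw)=\vps(\Arwz)$ of \thmref{Gaudin-duality} directly yields $\vpd(\Adzw)=\vps\big(\cA_{m+p}^{\z,\jjp}(\w,\bjjp)\big)$.

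There is essentially no obstacle here; the content of the corollary is just the specialization $q=n=0$ together with the observation that $\gl_{p+m|0}=\gl_{p+m}$. The only point worth a sentence is that the decomposition \eqref{Fock-d}, \eqref{F-decomp} then exhibits $\F$ as a direct sum of tensor products $\Vk=V^{(\xi_1)}_{k_1}\otimes\cdots\otimes V^{(\xi_\dd)}_{k_\dd}$, each $V^{(\xi_a)}_{k_a}$ being an infinite-dimensional $\glswa=\gl_{p+m}[\twa]/\twa^{\bjp_a}\gl_{p+m}[\twa]$-module supported on monomials of fixed degree; this is the sense in which the Fock space is "a direct sum of tensor products of infinite-dimensional $\gl_{m+p}$-modules" claimed in the statement. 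Thus the proof is a two-line reduction to \thmref{Gaudin-duality}.

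\begin{proof}
Set $q=n=0$. Then $q'=n'=0$ by \eqref{rel}, so $\el=p'+m'$, and $\gl_\ds=\gl_{p+m|0}=\gl_{p+m}$ is an ordinary general linear Lie algebra. The Fock space $\F$ involves only the even variables $x^a_i$ and $y^a_r$, the Weyl superalgebra $\cD$ becomes an ordinary Weyl algebra, and the homomorphisms $\vpd$, $\vps$ of \propref{gld-map} and \propref{glr-map} retain the same form (all parity signs being trivial). Under this specialization $\Arwz=\cA_\ds^{\nu_{\jjp}^{\z}}(\w,\bjjp)$ becomes $\cA_{m+p}^{\z,\jjp}(\w,\bjjp)$ in the notation of \eqref{Gaudin-Jordan}, while $\Adzw$ is unchanged. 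The claim is therefore immediate from \thmref{Gaudin-duality}. Moreover, by \eqref{Fock-d} and \eqref{F-decomp}, $\F$ decomposes as a direct sum of the $\gl_{m+p}({w_1,\bjp_1})\oplus\cdots\oplus\gl_{m+p}({w_\dd,\bjp_\dd})$-modules $\Vk$, each a tensor product of the infinite-dimensional degree-graded pieces $V^{(\xi_a)}_{k_a}$.
\end{proof}
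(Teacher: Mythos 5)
Your proof is correct and matches the paper's: the corollary is obtained exactly by specializing \thmref{Gaudin-duality} to $q=n=0$, noting that $\gl_{p+m|0}=\gl_{p+m}$ and that \eqref{rel} forces $q'=n'=0$. The closing remark about the decomposition of $\F$ via \eqref{Fock-d} and \eqref{F-decomp} into tensor products of infinite-dimensional modules is likewise just the observation the paper makes in the surrounding text.
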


Further, the following duality holds.

\begin{cor} \label{variant}
Suppose $\dd=d$ and $\el=p+q+m+n$. Then
$\vpd \! \lb  \cA_d^{\w \!, \, (1^\dd)} \lb \z,  (1^\el) \rb \rb=\vps \! \lb  \cA_\ds^{\z,  (1^\el) } \lb \w, (1^\dd) \rb \rb$.
\end{cor}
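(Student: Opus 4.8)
The plan is to derive \corref{variant} directly from \thmref{Gaudin-duality}, which already treats arbitrary singularity types: under the stated hypotheses the data $\bjjp$ and $\jjp$ are forced to be $(1^\dd)$ and $(1^\el)$, so the corollary is exactly the regular-singularity instance of the theorem.

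First I would check that $\dd=d$ and $\el=p+q+m+n$ pin down the remaining parameters. Since $\el=p^\prime+q^\prime+m^\prime+n^\prime$ with $p^\prime\le p$, $q^\prime\le q$, $m^\prime\le m$, $n^\prime\le n$, the equality $\el=p+q+m+n$ forces $p^\prime=p$, $q^\prime=q$, $m^\prime=m$, $n^\prime=n$. The relations \eqref{rel} then read $\sum_{i=1}^{p}\jp_i=p$, $\sum_{i=1}^{q}\jp_{p+i}=q$, and so on; since each $\jp_i\in\N$ this gives $\jp_i=1$ for all $i$, i.e., $\jjp=(1^\el)$, and likewise $\sum_{i=1}^{\dd}\bjp_i=d$ together with $\dd=d$ and $\bjp_i\in\N$ gives $\bjjp=(1^\dd)$. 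Next I would match up the notation: by \eqref{Gaudin-Jordan} the symbols $\cA_d^{\w,(1^\dd)}(\z,(1^\el))$ and $\cA_\ds^{\z,(1^\el)}(\w,(1^\dd))$ in the corollary are precisely the algebras $\Adzw$ and $\Arwz$ of \thmref{Gaudin-duality} for the present (now forced) choice of parameters --- here $\J_{\bjjp}(\w)$ is the diagonal matrix $\mathrm{diag}(w_1,\ldots,w_d)$, so $\mu_{(1^\dd)}^{\w}(e_{ab})=-\de_{a,b}w_a$ by \eqref{muw}, and $\nu_{(1^\el)}^{\z}(E^i_j)=(-1)^{|i|+1}\de_{i,j}z_i$ by \eqref{ovmuz}. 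With these identifications, \thmref{Gaudin-duality} gives $\vpd\big(\cA_d^{\w,(1^\dd)}(\z,(1^\el))\big)=\vps\big(\cA_\ds^{\z,(1^\el)}(\w,(1^\dd))\big)$ verbatim.

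Consequently there is no genuine obstacle beyond the bookkeeping above; all of the real content is already in \thmref{cdet-Ber} and \thmref{Gaudin-duality}. It is worth recording, however, that in this regular case the pair $(\vpd,\vps)$ is a variant of the Howe dual pair of \cite{CLZ}, so \corref{variant} may equally be read as the statement that the Howe duality of $(\gld,\gls)$ intertwines the usual inhomogeneous Gaudin algebras attached to $\gld$ and $\gls$.
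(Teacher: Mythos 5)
Your proposal is correct and matches the paper's (implicit) argument: the corollary is stated in the paper without a separate proof precisely because, as you observe, the hypotheses $\dd=d$ and $\el=p+q+m+n$ together with \eqref{rel} force $\bjjp=(1^\dd)$ and $\jjp=(1^\el)$, so the statement is the regular-singularity instance of \thmref{Gaudin-duality}. Your bookkeeping of the parameters and of $\mu_{(1^\dd)}^{\w}$, $\nu_{(1^\el)}^{\z}$ is accurate, and the closing remark about the Howe dual pair is consistent with the paper's \remref{complete}-style commentary following the corollary.
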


\begin{rem}
A variant of \corref{variant}, called the Bethe duality of $(\gld, \gls)$, is established in \cite{ChL25-2}.
\end{rem}

\subsection{An application of the duality} \label{app}

In this subsection, we consider the actions of $\Adzw$ and $\Arwz$ on the Fock space $\F$ via the maps $\vpd$ and $\vps$, respectively; see \propref{gld-map} and \propref{glr-map}.
We will restrict our attention to $\jjp=(1^\el)$, where $\el=p+q+m+n$.

First of all, we review some well-known properties of the action of the Gaudin algebras (with the simplest possible singularities) on finite-dimensional $\gld$-modules.
An element $\mu \in \gld^*$ is called \emph{regular} if the dimension of the centralizer $\gld^\mu$ of $\mu$ in $\gld$ is $d$.
For $\ell \in \N$, let
$$
\Xl=\setc*{\! (z_1, \ldots,z_\ell)\in \C^\ell}{z_i\not=z_j \,\, \mbox{for any $i\not=j$} \!}
$$
be the \emph{configuration space} of $\ell$ distinct points on $\C^\ell$.
The following is a consequence of \cite{FFRy}.

 \begin{prop}   \label{cyclic+diag}
Let $\bn V= V_1 \otimes \cdots \otimes V_\el$, where $V_1, \ldots, V_\el$ are finite-dimensional irreducible $\gld$-modules.
Then $\bn V$ is a cyclic $\cA_d^{\w, \bjjp}(\z, (1^\el))$-module for $\w \in \Xd$ and $\z \in \Xpqmn$.
 Moreover, the Gaudin algebra $\cA_d^{\w, \bjjp}(\z, (1^\el))_{\bn V}$ is diagonalizable with a simple spectrum for generic $\w \in \Xd$ and $\z \in \Xpqmn$.
\end{prop}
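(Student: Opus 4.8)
The plan is to reduce the assertion to the standard form of the inhomogeneous Gaudin model with a twist and then quote \cite{FFRy}. Since $\jjp=(1^\el)$, the commutative diagram in \secref{ev-map} identifies $U\big(\gld(\z,(1^\el))\big)$ with $U(\gld)^{\otimes\el}$, the element $e_{ab}$ of the $i$-th Takiff quotient $\gld(z_i,1)\cong\gld$ going to $e_{ab}^{(i)}:=1\otimes\cdots\otimes e_{ab}\otimes\cdots\otimes 1$ ($e_{ab}$ in the $i$-th slot). With $\mu:=\mu_{\bjjp}^{\w}$, formula \eqref{Pmuz} then reads
\[
\Pmuz\big(e_{ab}\otimes t^{-1}\big)=-\sum_{i=1}^{\el}\frac{e_{ab}^{(i)}}{z-z_i}+\mu_{\bjjp}^{\w}(e_{ab}),
\]
so by \eqref{Gaudin-Jordan} the algebra $\cA_d^{\w,\bjjp}(\z,(1^\el))=\cA_d^{\mu_{\bjjp}^{\w}}(\z,(1^\el))$ is precisely the image in $U(\gld)^{\otimes\el}$ of the Gaudin subalgebra attached to the distinct marked points $z_1,\ldots,z_\el$ on $\C$ together with a twist at $\infty$ by $\mu_{\bjjp}^{\w}\in\gld^{*}$, acting on the tensor-product module $\bn V=V_1\otimes\cdots\otimes V_\el$.

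Next I would verify the hypotheses required by \cite{FFRy}. For $\w\in\Xd$ the scalars $w_1,\ldots,w_{\dd}$ are pairwise distinct, so the Jordan matrix $\J_{\bjjp}(\w)=\bigoplus_{a=1}^{\dd}\J_{\bjp_a}(w_a)$ has a single Jordan block for each eigenvalue; hence it is a regular matrix, its centraliser in $\gld$ has dimension $\sum_{a}\bjp_a=d$, and by \eqref{muw} the functional $\mu_{\bjjp}^{\w}$ is a regular element of $\gld^{*}$ in the sense of \secref{app}. Moreover $\J_{\bjjp}(\w)$ is upper triangular, so $\mu_{\bjjp}^{\w}$ vanishes on the strictly lower-triangular part of $\gld$, i.e. it is compatible with the standard Borel subalgebra $\fb_d$ of $\gld$. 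Finally, $\z\in\Xpqmn$ says that $z_1,\ldots,z_\el$ are pairwise distinct.

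With this in place, both assertions follow from \cite{FFRy}. The cyclicity statement — valid for all $\w\in\Xd$, $\z\in\Xpqmn$, with no genericity — is the fact proved there that, for a regular $\fb_d$-compatible twist and distinct marked points, the tensor product $v_1\otimes\cdots\otimes v_\el$ of $\fb_d$-highest weight vectors $v_i\in V_i$ generates $\bn V$ over the Gaudin algebra. The diagonalizability and simple-spectrum statement rests on the description in \cite{FFRy} of the joint spectrum of the Gaudin algebra on $\bn V$ by monodromy-free $\gl_d$-opers on $\mathbb{P}^1$ having regular singularities at the $z_i$ (with exponents given by the highest weights of the $V_i$) and an irregular singularity at $\infty$ of the type prescribed by $\mu_{\bjjp}^{\w}$: for generic $(\w,\z)$ this scheme is reduced with exactly $\dim\bn V$ points, and together with the cyclicity (which makes $\bn V$ free of rank one over the image algebra) this forces $\cA_d^{\w,\bjjp}(\z,(1^\el))_{\bn V}$ to be diagonalizable with simple spectrum.

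The translation in the first two paragraphs is routine; the real content is invoking \cite{FFRy} correctly, and that is where the main obstacle lies. One must check that $\mu_{\bjjp}^{\w}$ satisfies the regularity and Borel-compatibility hypotheses in the precise form used there, and in particular note that $\mu_{\bjjp}^{\w}$ is regular but \emph{not} regular semisimple unless all $\bjp_a=1$; thus one genuinely needs the version of those results that allows an irregular singularity at $\infty$ — a regular, not necessarily semisimple, twist — which is exactly the setting of \cite{FFRy}. The simple-spectrum part moreover rests on the full oper-theoretic machinery there and admits no elementary shortcut.
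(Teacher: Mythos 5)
Your proposal is correct and, for the cyclicity statement, follows the same path as the paper: reduce to the inhomogeneous Gaudin algebra on $U(\gld)^{\otimes\el}$ with twist $\mu_{\bjjp}^{\w}$, check that $\J_{\bjjp}(\w)$ is a regular matrix for $\w\in\Xd$ (one Jordan block per distinct eigenvalue, centraliser of dimension $d$), and invoke the cyclicity result of \cite{FFRy} for regular, not necessarily semisimple, twists --- this is exactly the paper's appeal to \cite[Corollary 5]{FFRy}, and your explicit verification of regularity is a welcome expansion of the paper's one-line assertion.

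For the simple-spectrum statement the two arguments diverge. You go through the full oper-theoretic description of the joint spectrum and cite the fact that, for generic $(\w,\z)$, the relevant scheme of monodromy-free opers is reduced with $\dim\bn V$ points; combined with cyclicity (so that $\bn V$ is free of rank one over the image algebra) this does force diagonalizability with simple spectrum. The paper instead argues more economically: cyclicity holds for all $\w\in\Xd$, $\z\in\Xpqmn$; by \cite[Lemma 2]{FFRy} the action is diagonalizable with simple spectrum at some point of the parameter space; and having a simple spectrum is an open condition, so it holds generically. Your route is logically sound but leans on a deeper input --- the reducedness of the oper scheme for generic parameters, which in \cite{FFRy} is itself established by essentially the degeneration-plus-openness argument the paper uses directly. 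So the paper's proof is the lighter one and avoids any circularity risk in citing the reducedness statement; your version buys a more conceptual description of the spectrum at the cost of invoking a stronger theorem. The incidental remark about Borel-compatibility of $\mu_{\bjjp}^{\w}$ (it vanishes on the strictly lower-triangular part) is harmless but not needed for the hypotheses as the paper uses them; regularity is the operative condition.
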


\begin{proof}
The first statement follows from \cite[Corollary 5]{FFRy} since the element $\mu_{\bjjp}^{\w} \in \gld^*$, defined in \eqref{muw}, is regular.
The second statement follows from the first one, \cite[Lemma 2]{FFRy}, and the property that having a simple spectrum is an open condition.
\end{proof}

Every $\glsw$-module is a $\gls$-module via the diagonal map from $\gls$ to $\glsw$. For any $\mu\in \fh_{\ds}^{*}$ and any $\glsw$-module $M$, let
\begin{equation} \label{weight}
M_\mu=\setc*{\! v \in M }{ h v = \mu(h) v  \quad  \mbox{for all $h \in \fh_{\ds}$} \!}.
\end{equation}
If $M_\mu \not=0$, then $\mu$ is called a weight of $M$ and $M_\mu$ is called the $\mu$-weight space of $M$. 
There is a weight space decomposition
\begin{equation} \label{Fmu}
\F=  \bigoplus_{\mu\in \fh_{\ds}^{*}}  \F_\mu.
\end{equation}

Let $\jjp=(1^\el)$, where $\el=p+q+m+n$.
For $1\leq i\leq \el$, define $\gld^{(i)}=\gld(z_i, 1)\cong \gld$. 
Let $W^{(i)}_{\mu_{i}}$ be the subspace of $\C[y^1_i,\cdots, y^d_i]$ (resp.,  $\C[x^1_{i-p-q},\cdots, x^d_{i-p-q}]$) spanned by monomials of degree $\mu_i\in-\Zp$ (resp.,  $\Zp$) if $1 \le i \le p+q$ (resp., $p+q+1 \le i \le \el$). 
We have direct sum decompositions of $\gld^{(i)}$-modules: 
$$
\C[y^1_i,\cdots, y^d_i]=\bigoplus_{\mu_i\in-\Zp} W^{(i)}_{\mu_{i}}, \qquad 1 \le i \le p+q,
$$
and
$$
 \C[x^1_{i-p-q},\cdots,x^d_{i-p-q}]=\bigoplus_{\mu_i \in \Zp} W^{(i)}_{\mu_{i}},  \qquad p+q+1 \le i \le \el.
$$
Evidently, $W^{(i)}_{\mu_{i}}$ is finite-dimensional, and it is well known that $W^{(i)}_{\mu_{i}}$ is an irreducible $\gld^{(i)}$-module for each $i$ (see, for instance, \cite[Theorem 3.3]{CLZ} by setting exactly one of $p,q,m,n$ (given there) to 1 and the others to 0). 

For $\mu_1, \ldots, \mu_{p+q}\in -\Zp$ and $\mu_{p+q+1},\ldots,\mu_\el \in \Zp$, define
$$
\Wmu= W_{\mu_1}^{(1)} \otimes \cdots \otimes W_{\mu_\el}^{(\el)}.
$$
The Fock space $\F$ decomposes into a direct sum of $\gld(\z, (1^\el))$-modules
\begin{equation} \label{Fmub}
\F=\bigoplus_{\substack{\mu_1, \ldots, \mu_{p+q}\in -\Zp,\\ \mu_{p+q+1},\ldots,\mu_\el \in \Zp}} \Wmu,
\end{equation}
and each $\Wmu$ is a $\gld$-module via the diagonal map from $\gld$ to $\gld(\z, (1^\el))$. 

For any $\gld$-module $M$ and $\mu \in \fh_d^*$, the space $M_\mu$ is defined as in \eqref{weight} by replacing $\fh_{\ds}$ with $\fh_d$. 
The action of $\fh_d$ on $\F$ is determined by the action of $e_{aa}$ on $\F$, which is given by
$$
-\sum_{r=1}^{p+q} y^a_r \pa_{y^a_r}+\sum_{i=1}^{m+n} (-1)^{|p+q+i|}  \pa_{x^a_i} x^a_i =-\sum_{r=1}^{p+q} y^a_r \pa_{y^a_r}+\sum_{i=1}^{m+n} x^a_i \pa_{x^a_i}+(m-n),
$$
for $1 \le a \le d$.
For $k_1, \ldots k_{\dd}  \in \Z$, we define 
$$
\F_{[k_1,\ldots, k_\dd]}=\bigoplus_{\eta}  \F_\eta,
$$
where the direct sum is taken over all $\eta\in \fh_d^*$ such that
$\sum_{i=d_a+1}^{d_{a+1}} \eta(e_{ii})=k_a+ (m-n) \bjp_a$
for $1 \le a \le \dd$. 
Thus, $\F$ has a direct sum decomposition of subspaces
\begin{equation} \label{Fk}
\F=\bigoplus_{k_1, \ldots, k_{\dd} \in \Z} \F_{[k_1, \ldots, k_{\dd}]}.
\end{equation}
For $\mu_1, \ldots, \mu_{p+q}\in -\Zp$ and $\mu_{p+q+1},\ldots,\mu_\el \in \Zp$, let
$$
 \Wmu_{[k_1, \ldots, k_{\dd}]}= \Wmu \cap \F_{[k_1, \ldots, k_{\dd}]}.
$$
Then
\begin{equation} \label{Wmu}
\Wmu=\bigoplus_{k_1, \ldots, k_{\dd} \in \Z} \Wmu_{[k_1, \ldots, k_{\dd}]}.
\end{equation}

For $k_1, \ldots k_{\dd}  \in \Z$, recall that $\Vk =V^{(\xi_1)}_{k_1}  \otimes \cdots \otimes V^{(\xi_{\dd})}_{k_{\dd}}$, where $V^{(\xi_a)}_{k_a}$ is the $\gls(w_a, \xi_a)$-submodule of $\C [\Sig^{(a)} ]$ spanned by monomials of degree $k_a$ (see \secref{ex}). Note that $\Vk$ is infinite-dimensional if $p \not= 0$ and $m \not= 0$.

\begin{prop} \label{wt=wt}
For any $k_1, \ldots k_{\dd} \in \Z$ and any weight $\mu$ of $\Vk$, we have 
$$
\Vk_\mu =\Wmubk,
$$  
where 
$$
\overline{\mu}_r=\begin{cases}
\, \mu(E^r_r)+(-1)^{|r|} d &   \mbox{if} \ \ 1\le r \le p+q ;\\
\, \mu(E^r_r) \ \ & \mbox{if} \ \ p+q+1\le r \le \el.
\end{cases}
$$
\end{prop}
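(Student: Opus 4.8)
The plan is to prove the equality by exhibiting the same explicit monomial basis for both sides. Every monomial $f \in \F$ is read in two ways. On the one hand, by \eqref{Fock-d} it factors uniquely as $f = f_1 \cdots f_{\dd}$ with $f_a \in \C[\Sig^{(a)}]$, and we record the tuple $\big(\deg f_1, \ldots, \deg f_{\dd}\big)$ of $\dds$-degrees (see \secref{ex}). On the other hand, collecting the generators by lower index gives $\F = \bigotimes_{r=1}^{p+q}\C[y^1_r, \ldots, y^d_r] \otimes \bigotimes_{i=1}^{m+n}\C[x^1_i, \ldots, x^d_i]$, and we record the total degree $\deg_{y_r} f \in -\Zp$ of $f$ in $y^1_r, \ldots, y^d_r$ (each $y^\al_r$ having degree $-1$) together with the total degree $\deg_{x_i} f \in \Zp$ of $f$ in $x^1_i, \ldots, x^d_i$ (each $x^\al_i$ having degree $1$). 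Monomials are simultaneous eigenvectors for $\fh_d$ and $\fh_{\ds}$, and all the subspaces in the statement are spanned by monomials, so it suffices to match the defining degree conditions.

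First I would describe $\Vk_\mu$. Since $V^{(\xi_a)}_{k_a}$ is the span of the degree-$k_a$ monomials in $\C[\Sig^{(a)}]$, the module $\Vk$ is spanned by the monomials $f$ with $\deg f_a = k_a$ for all $a$. Computing the diagonal action of $\fh_{\ds}$ on $\F$ from \propref{glr-map} (and normally ordering), one finds that $E^r_r$ acts as $(-1)^{|r|+1} d - \sum_{\al=1}^d y^\al_r \pa_{y^\al_r}$ for $1 \le r \le p+q$, while $E^{p+q+i}_{p+q+i}$ acts as $\sum_{\al=1}^d x^\al_i \pa_{x^\al_i}$ for $1 \le i \le m+n$. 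On a monomial $f$ these give $(-1)^{|r|+1} d + \deg_{y_r} f$ and $\deg_{x_i} f$, respectively. Hence $f$ has $\fh_{\ds}$-weight $\mu$ exactly when $\deg_{y_r} f = \mu(E^r_r) + (-1)^{|r|} d = \overline{\mu}_r$ for $1 \le r \le p+q$ and $\deg_{x_i} f = \mu(E^{p+q+i}_{p+q+i}) = \overline{\mu}_{p+q+i}$ for $1 \le i \le m+n$, with $\overline{\mu}$ as in the statement. Thus $\Vk_\mu$ is the span of the monomials $f$ satisfying $\deg f_a = k_a$ $(1 \le a \le \dd)$, $\deg_{y_r} f = \overline{\mu}_r$ $(1 \le r \le p+q)$, and $\deg_{x_i} f = \overline{\mu}_{p+q+i}$ $(1 \le i \le m+n)$.

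Next I would describe $\Wmubk$. By construction $\Wmub = W^{(1)}_{\overline{\mu}_1} \otimes \cdots \otimes W^{(\el)}_{\overline{\mu}_\el}$ is precisely the span of the monomials $f$ with $\deg_{y_r} f = \overline{\mu}_r$ for $1 \le r \le p+q$ and $\deg_{x_i} f = \overline{\mu}_{p+q+i}$ for $1 \le i \le m+n$. For the condition $f \in \F_{[k_1, \ldots, k_{\dd}]}$, the formula displayed above for the diagonal action of $\fh_d$ on $\F$ (equivalently, \propref{gld-map}) shows that, for a monomial $f = f_1 \cdots f_{\dd}$ and $1 \le a \le \dd$, its $\fh_d$-weight $\eta$ satisfies $\sum_{\al = d_a + 1}^{d_{a+1}} \eta(e_{\al\al}) = \deg f_a + (m-n)\bjp_a$; so $f \in \F_{[k_1, \ldots, k_{\dd}]}$ if and only if $\deg f_a = k_a$ for all $a$. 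Consequently $\Wmubk = \Wmub \cap \F_{[k_1, \ldots, k_{\dd}]}$ is the span of the monomials $f$ with $\deg f_a = k_a$, $\deg_{y_r} f = \overline{\mu}_r$, and $\deg_{x_i} f = \overline{\mu}_{p+q+i}$ — exactly the basis obtained above for $\Vk_\mu$. Therefore $\Vk_\mu = \Wmubk$ (and both sides vanish when $\mu$ is not a weight of $\Vk$).

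The argument is pure weight/degree bookkeeping, so the only genuinely delicate point is matching the additive constants that come from normal ordering. One must check that the shift $(-1)^{|r|} d$ appearing in $\overline{\mu}_r$ is exactly the constant term of the diagonal operator $E^r_r$ produced by \propref{glr-map}, and that the shift $(m-n)\bjp_a$ built into the definition of $\F_{[k_1, \ldots, k_{\dd}]}$ is exactly $\bjp_a = d_{a+1} - d_a$ copies of the constant $(m-n)$ in the operator by which $e_{\al\al}$ acts on $\F$ (\propref{gld-map}). Once these are pinned down, the two monomial descriptions coincide and the proposition follows.
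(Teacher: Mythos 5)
Your proposal is correct and follows essentially the same route as the paper's proof: both arguments compute the diagonal actions of $\fh_d$ and $\fh_\ds$ on $\F$ (including the normal-ordering constants $(-1)^{|r|+1}d$ and $m-n$), identify $\F_{[k_1,\ldots,k_{\dd}]}$ with $\Vk$ and $\F_\mu$ with $\Wmub$, and conclude by intersecting. Your phrasing in terms of an explicit monomial basis is just a more hands-on presentation of the same degree/weight bookkeeping.
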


\begin{proof} 
Let $k_1, \ldots k_{\dd} \in \Z$.
By \eqref{F-decomp} and \eqref{Fk}, we have
$$
 \F_{[k_1,\ldots, k_\dd]}=\Vk.
$$
The Cartan subalgebra $\fh_\ds$ of $\gls$ acts on $\F$ as follows: For $1 \le r \le p+q$, $E^r_r$ acts on $\F$ by 
$$
\sum_{a=1}^{\dd} \sum_{\al=\va_a+1}^{\va_{a+1}}  (-1)^{|r|+1}  \pa_{y^{\al}_r} y^\al_r=-\sum_{\al=1}^{d} y^\al_r  \pa_{y^\al_r}+(-1)^{|r|+1} d, 
$$
and for $1 \le i \le m+n$, $E^{p+q+i}_{p+q+i}$ acts on $\F$ by
$$
\sum_{a=1}^{\dd} \sum_{\al=\va_a+1}^{\va_{a+1}}   x^{\al}_i \pa_{x^\al_i}=\sum_{\al=1}^{d} x^{\al}_i \pa_{x^\al_i}.
$$
For any weight $\mu$ of $\Vk$, the above descriptions together with \eqref{Fmu} and \eqref{Fmub} give 
$$
\F_\mu=\Wmub.
$$
Hence
$$
\Vk_\mu=\F_{[k_1,\ldots, k_\dd]}\cap\F_\mu=\Wmubk,
$$
as desired.
\end{proof}

According to the duality for Gaudin models with irregular singularities, we expect that an analog of \propref{cyclic+diag} should be valid for the action of the Gaudin algebra $\cA_\ds^{\z,  (1^{\el}) }(\w, \bjjp)$ on some $\glsw$-modules.
The following theorem supports our expectation.

\begin{thm} \label{Gaudin-app}
For any $k_1, \ldots k_{\dd} \in \Z$ and any weight $\mu$ of $\Vk$, the $\mu$-weight space $\Vk_\mu$ is a cyclic $ \cA_\ds^{\z,  (1^{\el}) }(\w, \bjjp)$-module for $\w \in \Xd$ and $\z \in \Xpqmn$.
Moreover, the Gaudin algebra $\cA_\ds^{\z,  (1^{\el}) }(\w, \bjjp)_{ \Vk _{\! \mu}}$ is diagonalizable with a simple spectrum for generic $\w$ and $\z$.
\end{thm}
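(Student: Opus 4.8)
The plan is to transfer the statement to the $\gld$-side by means of the duality \thmref{Gaudin-duality} and then read it off from \propref{cyclic+diag}. Throughout, $\jjp=(1^\el)$ with $\el=p+q+m+n$, so that $\Arwz=\cA_\ds^{\z,  (1^{\el}) }(\w, \bjjp)$, and I write $\cB:=\vpd \big(\Adzw \big)=\vps \big(\Arwz \big)\subseteq\End(\F)$ for the common image furnished by \thmref{Gaudin-duality}. Fix $k_1,\ldots,k_{\dd}\in\Z$ and a weight $\mu$ of $\Vk$.

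First I would recall the identifications made in the proof of \propref{wt=wt}: the $\fh_\ds$-weight space $\F_\mu$ equals $\Wmub$, which is realized inside $\F$ as the tensor product $W^{(1)}_{\ov{\mu}_1}\otimes\cdots\otimes W^{(\el)}_{\ov{\mu}_\el}$ of finite-dimensional irreducible $\gld^{(i)}$-modules and is therefore a $\gldzb$-submodule of $\F$; and the subspace $\F_{[k_1,\ldots,k_{\dd}]}$ equals $\Vk$, which is a $\glsw$-submodule of $\F$ by \eqref{F-decomp}. Consequently $\Vk_\mu=\F_\mu\cap\F_{[k_1,\ldots,k_{\dd}]}=\Wmubk$. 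Now $\Adzw\subseteq U \big(\gldzb \big)$, so $\cB=\vpd \big(\Adzw \big)$ stabilizes $\F_\mu$; and $\Arwz\subseteq U \big(\glsw \big)$, so the same algebra $\cB=\vps \big(\Arwz \big)$ stabilizes every subspace $\F_{[j_1,\ldots,j_{\dd}]}$ appearing in \eqref{Fk}. Hence $\cB$ stabilizes the $\F_\mu$-part of \eqref{Fk}, i.e.\ $\Wmub=\F_\mu=\bigoplus_{j_1,\ldots,j_{\dd}}\big(\F_\mu\cap\F_{[j_1,\ldots,j_{\dd}]}\big)=\bigoplus_{j_1,\ldots,j_{\dd}}\Wmub_{[j_1,\ldots,j_{\dd}]}$, the last equality being \eqref{Wmu}; in particular $\Vk_\mu=\Wmubk$ is a $\cB$-submodule of $\Wmub$.

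Next I would apply \propref{cyclic+diag} to $\bn V=\Wmub$: the factors $W^{(i)}_{\ov{\mu}_i}$ are finite-dimensional irreducible $\gld$-modules, and the $\vpd$-action of $\gldzb$ on $\Wmub\subseteq\F$ realizes precisely the corresponding tensor-product module. Hence $\Wmub$ is a cyclic $\Adzw$-module for $\w\in\Xd$ and $\z\in\Xpqmn$, and the image $\cB|_{\Wmub}$ of $\cB$ in $\End \big(\Wmub \big)$ is diagonalizable with a simple spectrum for generic $\w$ and $\z$. Two elementary observations then close the argument. If a module $N$ over a commutative algebra $\cB$ is cyclic and $N=\bigoplus_{\alpha}N_{\alpha}$ is a $\cB$-invariant decomposition, then each $N_{\alpha}$ is cyclic: writing a cyclic vector as $v=\sum_{\alpha}v_{\alpha}$, invariance gives $\cB v_{\alpha}\subseteq N_{\alpha}$, forcing $N_{\alpha}=\cB v_{\alpha}$. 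And if $\cB$ acts diagonalizably with a simple spectrum on $N$, then any $\cB$-invariant subspace is a sum of joint eigenspaces of $\cB$ in $N$, so $\cB$ again acts on it diagonalizably with a simple spectrum. Applied to the summand $\Wmubk$ of $\Wmub=\bigoplus_{j_1,\ldots,j_{\dd}}\Wmub_{[j_1,\ldots,j_{\dd}]}$, these give that $\Wmubk$ is a cyclic $\cB$-module and that $\cB|_{\Wmubk}$ is diagonalizable with a simple spectrum for generic $\w$ and $\z$. Since $\Vk_\mu=\Wmubk$ and $\cB|_{\Wmubk}$ is the image of $\Arwz=\cA_\ds^{\z,  (1^{\el}) }(\w, \bjjp)$ in $\End \big(\Vk_\mu \big)$, this is exactly the assertion.

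The step needing the most care is the invariance bookkeeping of the second paragraph: one must see that the single algebra $\cB$ stabilizes both the $\fh_\ds$-weight grading of $\F$ — through its $\gld$-realization $\vpd(\Adzw)$, which preserves the $\gldzb$-submodule $\F_\mu$ — and its refinement into the subspaces $\Vk$ — through its $\gls$-realization $\vps(\Arwz)$, which preserves the $\glsw$-submodules $\F_{[j_1,\ldots,j_{\dd}]}$ — so that it descends to the intersection $\Vk_\mu$. This is precisely where \thmref{Gaudin-duality} and the weight-space computations of \propref{wt=wt} enter; the remaining ingredients, namely \propref{cyclic+diag} and the two linear-algebra facts, are routine.
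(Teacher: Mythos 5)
Your proof is correct and follows essentially the same route as the paper: identify $\Vk_\mu$ with $\Wmubk$ via \propref{wt=wt}, transfer the action through the duality of \thmref{Gaudin-duality}, and apply \propref{cyclic+diag} together with the (correct) observations that cyclicity and simple spectrum pass to invariant direct summands. The only, harmless, deviation is in how the invariance of $\Vk_\mu$ is obtained: you deduce it from the duality itself, since the common algebra lies in the images of both $U \big(\gldzb \big)$ and $U \big(\glsw \big)$ and hence preserves both decompositions of $\F$, whereas the paper instead invokes Rybnikov's result that $\cA_\ds^{\z,  (1^{\el}) }(\w, \bjjp)$ commutes with the centralizer $\gls^{\nu} \supseteq \fh_\ds$.
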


\begin{proof}

Let $k_1, \ldots k_{\dd} \in \Z$, and let $\mu$ be any weight of $\Vk$. The Gaudin algebra $\cA_\ds^{\z,  (1^{\el}) }(\w, \bjjp)$ commutes with the diagonal action of the centralizer $\gls^{\nu}$ of $\nu$ in $\gls$ (see \cite[Proposition 4]{Ry06}), where $\nu:=\nu_{(1^\el)}^{\z}$ is defined as in \eqref{ovmuz}. Since $\nu$ corresponds to a diagonal matrix, $\gls^{\nu}$ contains $\fh_\ds$. Thus, $\cA_\ds^{\z,  (1^{\el}) }(\w, \bjjp)$ preserves weight spaces, and the weight space $\Vk_\mu$ is an $\cA_\ds^{\z,  (1^{\el}) }(\w, \bjjp)$-module.
By \thmref{Gaudin-duality} and \propref{wt=wt}, $\Wmubk$ is an $\cA_d^{\w, \bjjp}(\z, 1^\ell)$-module, and
\begin{equation} \label{eq-act}
\cA_\ds^{\z,  (1^{\el}) }(\w, \bjjp)_{\Vk_\mu}=\cA_d^{\w, \bjjp}(\z, (1^\el))_{\Wmubk}.
\end{equation}
As in \eqref{Wmu}, $\Wmub$ is a direct sum decomposition of $\cA_d^{\w, \bjjp}(\z, (1^\el))$-modules.
We apply \propref{cyclic+diag} to the decomposition of $\Wmub$ to see that the space $\Wmubk$ is a cyclic $\cA_d^{\w, \bjjp}(\z, (1^\el))$-module for $\w \in \Xd$ and $\z \in \Xpqmn$, and $\cA_d^{\w, \bjjp}(\z, (1^\el))_{\Wmubk}$ is diagonalizable with a simple spectrum for generic $\w$ and $\z$.
In view of \eqref{eq-act}, the theorem follows.
\end{proof}

\section{A duality for classical Gaudin models} \label{classical}

This section is devoted to introducing the classical Gaudin algebras with irregular singularities for $\gld$ and $\gl_\pqmn$ and establishing a duality for classical Gaudin models.

\subsection{Settings} \label{settings}

For any Lie superalgebra $\fg$,  the universal enveloping algebra $U(\fg)$ of $\fg$ has a filtration
\begin{equation} \label{filtration-g}
\C=U_0(\fg)\subset U_1(\fg)\subset U_2(\fg)\subset\cdots U_i(\fg)\subset \cdots
\end{equation}
such that $\bigcup_{i=0}^\infty U_i(\fg)=U(\fg)$ and $U_i(\fg)  U_{j}(\fg)\subseteq U_{i+j}(\fg)$ for all $i, j \in \Zp$,
where $U_i(\fg)$ is the subspace of $U(\fg)$ spanned by all products $A_1 \ldots A_j$, for $j \le i$ and $A_k \in \fg$.
We call \eqref{filtration-g} the \emph{canonical filtration} on $U(\fg)$.

Let ${\rm gr}(U(\fg))=\bigoplus_{i=0}^\infty U_{i}(\fg)/U_{i-1}(\fg)$ (where $U_{-1}(\fg):=0$). It is called the \emph{associated graded algebra} of $U(\fg)$ with respect to the canonical filtration.
Let $\cS(\fg)$ denote the \emph{supersymmetric algebra} of $\fg$. 
We have the following superalgebra isomorphism (see \cite[Proposition 2.3.6]{Dix}): 
\begin{equation} \label{grU} 
{\rm gr} \! \lb U(\fg) \rb \cong \cS(\fg).
\end{equation}
Because of \eqref{grU}, we say that $U(\fg)$ is a quantization of $\cS(\fg)$.

A \emph{Poisson superalgebra} $\cA$ is a supercommutative superalgebra over $\C$ equipped with a $\C$-bilinear map $\{\cdot,  \cdot\}: \cA\times \cA\longrightarrow \cA$, called a Poisson bracket, such that $(\cA,\{\cdot, \cdot\})$ is a Lie superalgebra satisfying the Leibniz rule, i.e.,
$$
\{A, BC\}=\{A, B\} C+ (-1)^{|A||B|}B \{A, C\}
$$
for any homogeneous elements $A, B, C \in \cA$.

The superalgebra ${\rm gr} \! \lb U(\fg) \rb$ is naturally a Poisson superalgebra (the construction in the proof of \cite[Proposition 1.3.2]{CG} works here by taking into account the sign rule). This induces a Poisson superalgebra structure on $\cS(\fg)$ via \eqref{grU}.

Let $\fg$ be a general linear Lie (super)algebra.
Fix $\ell \in \N$, $\z \in \C^\ell$ and $\jjp \in \N^\ell$.
For even variables $t$ and $z$, the map $\Psi_{(\z, \jjp)}:=\Psi^{0}_{(\z, \jjp)}$, defined in \secref{FFC}, preserves filtrations and hence descends to the Poisson superalgebra homomorphism
$$\Phz: \cS(\fgm) \longrightarrow \cS \big(\gz \big) [\tns[z^{-1}]\tns].$$
We add $z$ to the symbol $\Phz$ to emphasize the dependency on $z$.
Also,
\begin{equation} \label{Phmuz}
\Phz \lb A \otimes t^{-1} \rb= - \sum_{i=1}^{\ell}  \sum_{k=0}^{\jp_i-1} \frac{A \otimes \otzi^k}{(z-z_i)^{k+1}}, \qquad \mbox{for $A \in \fg$.}
\end{equation}

We will introduce the classical Gaudin algebras for $\gld$ and $\gls$ separately.
From now on, we fix two commuting even variables $z$ and $w$.

\subsection{Classical Gaudin algebras for $\gld$} \label{cl-gld}

Recall the definition of the determinant given in \secref{Ber}.
Let $\z \in \C^\ell$ and $\jjp \in \N^\ell$.
For any $\mu \in \gld^*$, we consider the matrix
$$
\ov \cL_d^{\mu}(\z, \jjp)  = \Big[\delta_{i,j} w + \Phz \lb e_{ij}\otimes t^{-1} \rb +\mu(e_{ij})  \Big]_{i,j=1,\ldots,d}
$$
over the commutative algebra $\cS \big(\gldz \big) [\tns[z^{-1}]\tns] [w]$.
We have an expansion
$$
\det(\ov \cL_d^{\mu}(\z, \jjp) )=w^d+\sum_{i=1}^{d} \ov{a}_i (z)w^{i-1}
$$
for some $\ov{a}_i (z) \in \cS \big(\gldz \big) [\tns[z^{-1}]\tns]$.
Let $\ov\cA_d^{\mu}(\z, \jjp)$ be the subalgebra of $\cS \big(\gldz \big)$ generated by the coefficients of the series $\ov{a}_i (z)$ for $i=1, \ldots, d$.
It is called the \emph{classical Gaudin algebra for $\gld$ with singularities of orders $\jp_i$ at $z_i$, $i=1, \ldots, \ell$, relative to $\mu$}, or simply the \emph{classical Gaudin algebra corresponding to $\cA_d^{\mu}(\z, \jjp)$}. The algebra $\ov\cA_d^{\mu}(\z, \jjp)$ is known to be Poisson-commutative \cite{FFTL}.

Let ${\rm gr}\! \lb \cA_d^{\mu}(\z, \jjp) \rb$ be the associated graded algebra of $\cA_d^{\mu}(\z, \jjp)$ with respect the filtration induced by the canonical filtration on $U(\gldz)$.
We remark that the proof of \cite[Theorem 3.4]{FFTL} shows that $\cA_d^{0}(\z, \jjp)$ is a quantization of $\ov\cA_d^{0}(\z, \jjp)$, i.e.,
\begin{equation} \label{FFTL}
{\rm gr}\! \lb \cA_d^{0}(\z, \jjp) \rb \cong \ov\cA_d^{0}(\z, \jjp).
\end{equation}
For $\z=(0)$ and $\jjp=(1)$, the algebra $\ov\cA_d^\mu:=\ov\cA_d^{\mu}((0), (1))$ is simply the \emph{shift of argument subalgebra} of $\cS(\fg)$ introduced by Mishchenko and Fomenko; see \cite{MF}.
Let $\cA_d^\mu=\cA_d^{\mu}((0), (1))$. Futorny and Molev \cite[Main Theorem]{FM} show that
\begin{equation} \label{FM}
\hspace{1cm} {\rm gr} (\cA_d^\mu) \cong \ov\cA_d^\mu \quad \qquad \mbox{for any $\mu \in \gld^*$.}
\end{equation}
We expect a more general statement to hold; see \conjref{gr} below.

\subsection{Classical Gaudin algebras for $\gl_\pqmn$} \label{cl-glr}

Recall the symbol $\ds=\pqmn$.
Let $\z \in \C^\ell$ and $\jjp \in \N^\ell$.
For any $\mu \in \gls^*$ which vanishes on the odd part of $\gls$, let
$$
\rLs^{\mu}(\z, \jjp) = \Big[ \delta_{i,j}z+ (-1)^{|i|} \lb \Phw   \lb E^i_j \otimes t^{-1} \rb +\mu(E^i_j) \rb  \Big]_{i,j\in \I},
$$
which is an amply invertible $(p+q+m+n) \times (p+q+m+n)$ matrix of type $\s:=(0^p, 1^q, 0^m, 1^n)$ over the supercommutative superalgebra $\cS \big(\glsz \big) \blb z^{-1}, w^{-1} \brb$.
We have an expansion 
$$
\Bers \! \left( \rLs^{\mu}(\z, \jjp)  \rb=\sum_{i=-\infty}^{p+m-q-n} \ov{b}_i (w) z^i
$$
for some $\ov{b}_i (w) \in \cS \big(\glsz \big) \blb w^{-1} \brb$.
Let $\ov\cA_\ds^{\mu}(\z, \jjp)$ be the subalgebra of $S \big(\glsz\big)$ generated by the coefficients of the series $\ov{b}_i (w)$, for $i \in \Z$ with $i \le p+m-q-n$.
It is called the \emph{classical Gaudin algebra for $\gls$ with singularities of orders $\jp_i$ at $z_i$, $i=1, \ldots, \ell$, relative to $\mu$}, or simply the \emph{classical Gaudin algebra corresponding to $\cA_\ds^{\mu}(\z, \jjp)$}.
It is Poisson-commutative by an argument parallel to the proof of \cite[Corollary 3.6]{MR}.
If $q=n=0$ and $d=p+m$, the algebra coincides with the one given in \secref{cl-gld}.

Let ${\rm gr}\! \lb \cA_\ds^{\mu}(\z, \jjp) \rb$ be the associated graded algebra of $\cA_\ds^{\mu}(\z, \jjp)$ with respect to the filtration induced by the canonical filtration on $U(\glsw)$.
We conjecture that the Gaudin algebras are quantizations of the classical Gaudin algebras.

\begin{conj} \label{gr}

${\rm gr} \lb \cA_\ds^{\mu}(\z, \jjp) \rb \cong \ov\cA_\ds^{\mu}(\z, \jjp)$ for any $\mu \in \gls^*$ which vanishes on the odd part of $\gls$.

\end{conj}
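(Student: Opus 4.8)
The plan is to adapt to the super setting the strategy by which Feigin--Frenkel--Toledano Laredo \cite{FFTL} and Futorny--Molev \cite{FM} prove the $\gld$ statements \eqref{FFTL} and \eqref{FM}, the new ingredient being a description of the classical limit of the subalgebra $\hat\fz_\ds\subseteq U(\glsm)$ generated by the Berezinian coefficients. Throughout, filtrations are the canonical ones of \eqref{filtration-g} and ``symbol'' means the image in the associated graded. The first step is a \emph{symbol computation for the Berezinian}: the symbol of each coefficient $b_i$ of $\Bers(\cT_\ds)\in U(\glsm)\blb\tau^{-1}\brb$ equals the corresponding coefficient $\ov b_i$ of the classical Berezinian $\Bers(\ov\cT_\ds)$, where $\ov\cT_\ds:=\big[\delta_{i,j}\tau+(-1)^{|i|}E^i_j\otimes t^{-1}\big]$ is now viewed over $\cS(\glsm)\blb\tau^{-1}\brb$. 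Grading so that $\tau$ and every $E^i_j\otimes t^{-1}$ have degree $1$, one sees that $\Bers(\cT_\ds)$ is homogeneous of degree $p+m-q-n$, so each $b_i$ has a well-defined filtration degree; and since the $\tau$-leading part of $\cT_\ds$ is $\tau\,\I$, every inverse occurring in the quasideterminant formula $|A|_{ij}=a_{i,j}-r_i^j(A^{ij})^{-1}c_j^i$ expands into a series in $\tau^{-1}$ with coefficients of controlled degree, so that following degrees through \propref{decomp}, \propref{decomp-2}, \propref{CFR} and \propref{Ber1n} identifies ${\rm symb}(d_i(\cT_\ds))$ with $d_i(\ov\cT_\ds)$ and hence ${\rm symb}(b_i)$ with $\ov b_i$. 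Since $\Psi^\mu$ and $\evjz$ preserve filtrations with the classical maps as their symbols, $\Pmuz$ is filtered and its symbol is (up to the natural identification of the formal variables) the classical map used to define $\ov\cA_\ds^\mu(\z,\jjp)$ in \secref{cl-glr}; applying this to the $b_i$ yields the inclusion ${\rm gr}\big(\cA_\ds^\mu(\z,\jjp)\big)\supseteq\ov\cA_\ds^\mu(\z,\jjp)$, so the remaining task is the equality of the graded Poincar\'e series of the two algebras.

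Following \cite{FFTL, FM}, I would reduce this to (i) the \emph{universal statement} ${\rm gr}(\hat\fz_\ds)\cong\overline{\hat\fz}_\ds$, where $\overline{\hat\fz}_\ds\subseteq\cS(\glsm)$ is generated by the $\ov b_i$ -- equivalently, the super-analog of \thmref{complete-SS} asserting that $\{\T^r b_i\mid i\le p+m-q-n,\ r\in\Zp\}$ is algebraically independent in $\fzr$ -- and (ii) a flatness argument showing that $(\mu,\z)\mapsto\cA_\ds^\mu(\z,\jjp)$ is a flat family whose ``classical'' degeneration is $\ov\cA_\ds^\mu(\z,\jjp)$, so that the specialization $\Pmuz$ does not drop the transcendence degree. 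For $\mu$ of the Jordan form $\nu_\jjp^\z$ of \eqref{ovmuz} and $(\z,\jjp)$ as in \eqref{rel}, one can instead attempt to transfer the equality from the $\gld$ side using the dualities \thmref{main1} and \thmref{main2}, which give $\vps\big(\cA_\ds^{\nu_\jjp^\z}(\w,\bjjp)\big)=\vpd\big(\cA_d^{\mu_\bjjp^\w}(\z,\jjp)\big)$ and $\ovps\big(\ov\cA_\ds^{\nu_\jjp^\z}(\w,\bjjp)\big)=\ovpd\big(\ov\cA_d^{\mu_\bjjp^\w}(\z,\jjp)\big)$; this would require first showing that ${\rm gr}$ commutes with $\vpd$ and $\vps$ on the relevant subalgebras and that $\vps$ is filtration-faithful there, and also extending the $\gld$ statements \eqref{FFTL} and \eqref{FM} to this $\mu$ and these $(\z,\jjp)$.

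The step I expect to be the main obstacle is (i), together with the resulting Poincar\'e series count. In the $\gld$ case the analog of \thmref{complete-SS} rests on the known structure of the Feigin--Frenkel center $\fzd$; here the corresponding identity $\fzr=\hat\fz_\ds$ is only a conjecture (\remref{complete-s}(ii)), so one has no structure theorem for the whole center and must instead compute the Poincar\'e series of $\hat\fz_\ds$ directly -- for instance by adapting the Manin-matrix and Berezinian techniques of \cite{MR, CM} to the non-standard Borel used here -- or by a deformation argument internal to $U(\glsm)$. Once (i) is available, the descent to $\cA_\ds^\mu(\z,\jjp)$ along $\Pmuz$ should follow the $\gld$ template with only routine sign- and parity-bookkeeping changes.
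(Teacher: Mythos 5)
The statement you are asked to prove is Conjecture~\ref{gr} of the paper: the authors do not prove it, they explicitly write that ``the general case is wide open'' and only point to the known special cases \eqref{FFTL} (the $\mu=0$, $\gld$ case from \cite{FFTL}) and \eqref{FM} (the one-point regular case from \cite{FM}). So there is no proof in the paper to compare against, and your text is, as you yourself frame it, a research plan rather than a proof. The gap you flag as ``the main obstacle'' is indeed the essential one and it is not closed: the FFTL/Futorny--Molev template needs, at minimum, a super-analog of \thmref{complete-SS} --- algebraic independence of $\{\T^r b_i\}$, or equivalently enough control of the Poincar\'e series of $\hat\fz_\ds$ and of its classical counterpart --- and this is precisely what is unavailable. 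By \remref{complete-s}(ii) even the identification $\fzr=\hat\fz_\ds$ is itself conjectural outside $\gl_{1|1}$ and $\gl_{2|1}$, so there is no structure theorem for the center from which to run the dimension count, and your proposal offers no substitute for it. Without that step the argument only yields (at best) one inclusion, ${\rm gr}\big(\cA_\ds^{\mu}(\z,\jjp)\big)\supseteq\ov\cA_\ds^{\mu}(\z,\jjp)$, which is not the conjectured isomorphism.

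Two further cautions. First, even the ``easy'' inclusion requires more care than your sketch suggests: the $b_i$ are coefficients of a Berezinian built from quasideterminants, i.e.\ from inverses of principal quasiminors, so they are a priori infinite $\tau^{-1}$-series whose coefficients have unbounded filtration degree; one must check that each fixed coefficient of $b_i(z)$ has a well-defined top symbol equal to the corresponding coefficient of $\ov b_i(w)$, and that no cancellation of leading terms occurs (this is where the noncommutativity genuinely enters). Second, the duality-transfer route via \thmref{Gaudin-duality} and \thmref{class-dual} cannot establish the conjecture even for $\mu=\nu^{\z}_{\jjp}$: the maps $\vpd,\vps$ are far from injective, passing to ${\rm gr}$ does not commute with taking images under a non-graded filtered map without additional strictness arguments, and the $\gld$-side input you would need --- \eqref{FM} extended to several points with higher-order singularities --- is also not among the cited results. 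In short, the proposal correctly diagnoses why the statement is hard but does not constitute a proof; the statement remains open.
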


The conjectures hold for some special cases (see \eqref{FFTL} and \eqref{FM}), but the general case is wide open.

\subsection{A classical duality of $(\gld, \gl_\pqmn)$} \label{classical-duality}

Let $\dd$, $\el$, $\w$, $\z$, $\bjjp$ and $\jjp$ be as in \secref{Fock}.
In \thmref{Gaudin-duality}, we establish the duality of $(\gld, \gls)$ for Gaudin models with irregular singularities, which is an equivalence between the actions of the Gaudin algebras $\Adzw$ and $\Arwz$ on the Fock space $\F$. 
The duality and \conjref{gr} lead us to expect that there should be a duality between the classical Gaudin algebras corresponding to $\Adzw$ and $\Arwz$. 
In this subsection, we will see that the expectation is confirmed.

Recall the Weyl superalgebra $\cD$ defined in \secref{Fock}. We define the degree of any monomial in $\cD$ in the usual way. In particular, the degrees of the generators $\xai$, $\yar$, $\pxai$ and $\pyar$ of $\cD$ are defined to be $1$, for $1 \le i \le m+n$, $1 \le r \le  p+q$ and $1 \le a \le  d$.
For $k \in \N$, let $\cD_k$ be the subspace of $\cD$ spanned by all monomials of degree less than or equal to $k$ in the generators $\xai$, $\yar$, $\pxai$ and $\pyar$, $1 \le i \le m+n$, $1 \le r \le  p+q$ and $1 \le a \le  d$, and let $\cD_0=\C$.
Then
\begin{equation}\label{filtration-D}
\cD_0\subset \cD_1\subset \cD_2\subset\cdots \cD_k \subset \cdots
\end{equation}
is a filtration on $\cD$ such that $\bigcup_{k=0}^\infty \cD_k =\cD$ and $\cD_k  \cD_{l}\subseteq \cD_{k+l}$ for all $k, l  \in \Zp$.
Just as \eqref{filtration-g}, we call \eqref{filtration-D} the canonical filtration on $\cD$.

Let ${\rm gr(\cD)}=\bigoplus_{k=0}^\infty \cD_k/\cD_{k-1}$ (where $\cD_{-1}:=0$), called the associated graded algebra of $\cD$ with respect to the canonical filtration, and let $\oD$ denote the polynomial superalgebra generated by the variables $\xai$, $\yar$, $\ppxai$ and $\ppyar$, for $1 \le i \le m+n$, $1 \le r \le  p+q$ and $1 \le a \le  d$, where $\xai$ and $\ppxai$ (resp., $\yar$ and $\ppyar$) are even for $1 \le i \le m$ (resp., $1 \le r \le p$) and are odd otherwise.  
It is well known that there is a superalgebra isomorphism
\begin{equation} \label{grD}
{\rm gr} \! \lb \cD \rb \cong \oD
\end{equation}
sending the elements $\xai+\cD_0$, $\yar+\cD_0$, $\pxai+\cD_0$ and $\pyar+\cD_0$ in $\cD_1/\cD_0$, respectively, to the elements $\xai$, $\yar$, $\ppxai$ and $\ppyar$ in $\oD$, for $1 \le i \le m+n$, $1\le r \le  p+q$ and $1 \le a \le  d$ (see, for example, \cite[Example 1.3.3]{CG} and the subsequent discussion). 
Moreover, the superalgebra ${\rm gr} \! \lb \cD \rb$ is naturally a Poisson superalgebra (which is seen as in the case of ${\rm gr} \! \lb U(\fg) \rb$ in \secref{settings}),
and thus we have a Poisson superalgebra structure on $\oD$ via \eqref{grD}.
More explicitly, $\oD$ is a Poisson superalgebra with the Poisson bracket $\{ \cdot, \cdot \}$ given by
\begin{equation} \label{Pb1}
\{ \xai, \xbj \}= \{ \xai, \ybs \}= \{ \yar, \ybs \}=\{ \ppxai, \ppxbj \}=\{ \ppxai, \ppybs \}= \{ \ppyar, \ppybs \}=0,
\end{equation}
\begin{equation} \label{Pb2}
\{ \ppxai, x^b_j \}=\de_{i,j} \de_{a,b},  \quad \{ \ppxai, y^b_s \}=\{ \ppyar, \xbj \}=0, \quad \{ \ppyar, y^b_s \}=\de_{r,s} \de_{a,b},
\end{equation}
for $1 \le i,j \le m+n$, $1 \le r,s \le  p+q$ and $1 \le a, b \le  d$.

The following proposition follows by descending the maps $\vpd$ and $\vps$, given in \propref{gld-map} and \propref{glr-map}, to the maps on the associated graded algebras since $\vpd (U_i(\gldzb)) \subseteq\cD_{2i}$ and $\vps (U_i(\glsw)) \subseteq\cD_{2i}$ for all $i \in \Zp$. 

\begin{prop} \label{map-classical}
\sloppy
\begin{enumerate} [\normalfont(i)]

\item The superalgebra homomorphism $\vpd : U \big(\gldzb \big) \longrightarrow \cD$, given in \propref{gld-map}, descends to a Poisson superalgebra homomorphism $\ovpd : \cS \big(\gldzb \big) \longrightarrow \oD$.

\item The superalgebra homomorphism $\vps : U \big(\glsw \big) \longrightarrow \cD$, given in \propref{glr-map}, descends to a Poisson superalgebra homomorphism $\ovps : \cS \big(\glsw \big) \longrightarrow \oD$.

\end{enumerate}
\end{prop}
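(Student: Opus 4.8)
The plan is to obtain $\ovpd$ and $\ovps$ as the homomorphisms induced by $\vpd$ and $\vps$ on passing to associated graded algebras. The first step will be to record the filtration estimate mentioned just above the statement: each generator $e_{ab}\otimes\otzi^k$ and $e_{ab}\otimes\otzj^l$ of $\gldzb$ lies in $U_1\big(\gldzb\big)$, and by \eqref{vpd-e} its $\vpd$-image is a sum of products of two of the generators $\xai,\yar,\pxai,\pyar$ of $\cD$, hence lies in $\cD_2$; since the canonical filtrations are multiplicative and $\gldzb$ generates $U\big(\gldzb\big)$, this yields $\vpd\big(U_i(\gldzb)\big)\subseteq\cD_{2i}$ for all $i\in\Zp$, and \eqref{vps-e} gives $\vps\big(U_i(\glsw)\big)\subseteq\cD_{2i}$ in the same way.

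Next I will use the general mechanism: for a homomorphism $f\colon A\to B$ of filtered superalgebras with $f(A_i)\subseteq B_{2i}$ for all $i$, the assignment $a+A_{i-1}\mapsto f(a)+B_{2i-1}$ (for $a\in A_i$) is well defined — if $a\in A_{i-1}$ then $f(a)\in B_{2i-2}\subseteq B_{2i-1}$ — and defines a homomorphism of graded superalgebras ${\rm gr}(f)\colon{\rm gr}(A)\to{\rm gr}(B)$. The point that will require the most care is that ${\rm gr}(f)$ is still a homomorphism of \emph{Poisson} superalgebras, even though $f$ does not preserve filtration degree. This works because the Poisson bracket on ${\rm gr}\big(U(\fg)\big)\cong\cS(\fg)$ has degree $-1$, whereas in the Weyl superalgebra $\cD$ the supercommutator of a degree-$i$ element and a degree-$j$ element lies in $\cD_{i+j-2}$ — on generators $[\pxai,\xbj],[\pyar,\ybs]\in\cD_0$ while all other brackets of generators vanish, and this propagates by the Leibniz rule — so the Poisson bracket on ${\rm gr}(\cD)\cong\oD$ has degree $-2$. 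Consequently, for $\bar a\in A_i/A_{i-1}$ and $\bar b\in A_j/A_{j-1}$, writing $\overline{(\,\cdot\,)}$ for the class in $B_{2i+2j-2}/B_{2i+2j-3}$, the chain
$$
{\rm gr}(f)\big(\{\bar a,\bar b\}\big)=\overline{f([a,b])}=\overline{[f(a),f(b)]}=\{{\rm gr}(f)(\bar a),{\rm gr}(f)(\bar b)\}
$$
holds — the first equality because $[a,b]\in A_{i+j-1}$ and $f(A_{i+j-1})\subseteq B_{2i+2j-2}$, the second because $f$ is an algebra homomorphism, the third because $f(a)\in B_{2i}$, $f(b)\in B_{2j}$ and the bracket on ${\rm gr}(\cD)$ drops filtration degree by $2$ — so the degree-doubling of $f$ is exactly offset and ${\rm gr}(f)$ respects Poisson brackets. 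I will then apply this with $f=\vpd$ and $f=\vps$.

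Finally I will transport along the canonical identifications: by \eqref{grU}, ${\rm gr}\big(U(\gldzb)\big)\cong\cS(\gldzb)$ and ${\rm gr}\big(U(\glsw)\big)\cong\cS(\glsw)$ are Poisson superalgebra isomorphisms (the Poisson structures on the supersymmetric algebras being defined through them), and by \eqref{grD}, ${\rm gr}(\cD)\cong\oD$ is a Poisson superalgebra isomorphism for the bracket \eqref{Pb1}--\eqref{Pb2}. Composing gives the desired $\ovpd\colon\cS(\gldzb)\to\oD$ and $\ovps\colon\cS(\glsw)\to\oD$, and unwinding the identifications shows they are given on generators by \eqref{vpd-e} and \eqref{vps-e} with every $\pxai,\pyar$ replaced by $\ppxai,\ppyar$. (Alternatively, one could take these formulas as the \emph{definition} of $\ovpd,\ovps$ and verify the Poisson relations directly by a computation parallel to the proofs of \propref{gld-map} and \propref{glr-map}, but the associated-graded route is cleaner.) Apart from the degree bookkeeping in the middle paragraph, the remaining steps — the filtration estimate, the well-definedness of ${\rm gr}(f)$, and the identification on generators — are routine.
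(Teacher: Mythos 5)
Your proposal is correct and follows exactly the route the paper takes (the paper compresses the whole argument into the one sentence preceding the proposition, namely the filtration estimate $\vpd(U_i)\subseteq\cD_{2i}$, $\vps(U_i)\subseteq\cD_{2i}$ and the passage to associated graded algebras). Your middle paragraph supplies the one point the paper leaves implicit — that the degree-doubling of $\vpd,\vps$ is exactly offset by the fact that the bracket on ${\rm gr}(\cD)\cong\oD$ lowers filtration degree by $2$ while that on ${\rm gr}(U(\fg))\cong\cS(\fg)$ lowers it by $1$ — and you handle it correctly.
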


The maps $\ovpd$ and $\ovps$ extend naturally to the Poisson superalgebra homomorphisms
$$
\ovpd: \cS \big(\gldzb \big) \blb z^{-1}, w^{-1} \brb \longrightarrow \oD \blb z^{-1}, w^{-1} \brb ,
$$
and
$$
\ovps: \cS \big(\glsw \big) \blb z^{-1}, w^{-1} \brb  \longrightarrow \oD  \blb z^{-1}, w^{-1} \brb,
$$
respectively.
Let $\mu_{\bjjp}^{\w} \in \gld^*$ and $\nu_{\jjp}^{\z} \in \gls^*$ be as in \secref{gld-gls-duality}.
We write
$$
\Cdzw=\ov\cA_d^{\mu_{\bjjp}^{\w}} (\z, \jjp) \quad \text{and} \quad \Crwz=\ov\cA_\ds^{\nu_{\jjp}^{\z}}(\w, \bjjp).
$$
We also write
$$
\rLdzw=\ov \cL_d^{\mu_{\bjjp}^{\w}} (\z, \jjp) \quad \text{and} \quad \rLswz=\rLs^{\nu_{\jjp}^{\z}}(\w, \bjjp).
$$
The following theorem gives a duality between the classical Gaudin algebras $\Cdzw$ and $\Crwz$, which we call the \emph{duality of $(\gld, \gls)$ for classical Gaudin models with irregular singularities}.

\begin{thm}\label{class-dual}
Recall $[\jp_i]$, $1 \le i \le \el$, defined in \eqref{jp}.
We have
$$
 \prod_{i=1}^{\el} (z-z_i)^{[\jp_i]}  \cdot \ovpd \! \lb \det \!  \lb \rLdzw  \rb \rb = \prod_{a=1}^{\dd} (w-w_a)^{\bjp_a} \cdot  \ovps \! \lb  \Bers \!  \lb \rLswz \rb \rb.
$$
Consequently,
$\ovpd \big( \Cdzw \big)=\ovps \big(\Crwz \big).$

\end{thm}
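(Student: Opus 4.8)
The plan is to prove the displayed identity first and then to obtain $\ovpd \big( \Cdzw \big)=\ovps \big(\Crwz \big)$ from it exactly as \thmref{Gaudin-duality} was obtained from \thmref{cdet-Ber}. For the latter step, recall that, by the very definitions in \secref{cl-gld} and \secref{cl-glr}, the algebra $\Cdzw$ is generated by the coefficients of $\det(\rLdzw)$ (viewed as a polynomial in $w$ over $\cS(\gldz)[\tns[z^{-1}]\tns]$) and $\Crwz$ is generated by the coefficients of $\Bers(\rLswz)$; since $\prod_{i=1}^{\el}(z-z_i)^{[\jp_i]}$ and $\prod_{a=1}^{\dd}(w-w_a)^{\bjp_a}$ are units with scalar coefficients in $\oD \blb z^{-1}, w^{-1} \brb$, multiplying a series over $\oD$ by either of them turns each of its coefficients into a finite $\C$-linear combination of the original ones (and conversely, via the inverse unit), so the two sides of the identity generate the same subalgebra of $\oD$, which is exactly the asserted equality.

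To prove the identity I would repeat the block-matrix argument from the proof of \thmref{cdet-Ber}, now over the supercommutative superalgebra $\oD \blb z^{-1}, w^{-1} \brb$. Set
\[
J:=\bigoplus_{a=1}^{\dd}\big(\!-\J_{\bjp_a}(w_a-w)\big)
\qquad\text{and}\qquad
J':=\bigoplus_{i=1}^{\el}\big(\!-\J_{\jp_i}(z_i-z)\big),
\]
and let $\bar M,\bar M'$ be the rectangular matrices in $\xai,\yar,\ppxai,\ppyar$ that are the classical shadows of the matrices $M,M'$ from the proof of \thmref{cdet-Ber}. Using \eqref{Phmuz}, \eqref{muw}, \eqref{ovmuz}, \propref{map-classical} and \eqref{J-inv} (to invert the Jordan blocks), one checks --- just as for \eqref{wLdzw}--\eqref{Lswz} --- that $\ovps(\rLswz)=J'-\bar M'(J^{t})^{-1}\bar M$ and $\big(\ovpd(\rLdzw)\big)^{t}=J^{t}-\bar M(J')^{-1}\bar M'$. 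The matrix
\[
\fL:=\begin{bmatrix} J^{t} & \bar M\\ \bar M' & J'\end{bmatrix}
\]
is of type $\hat{\s}:=(0^d,\s)\in\cS_{d+p+m|q+n}$ over a supercommutative ring, hence automatically an amply invertible Manin matrix. Evaluating $\Ber^{\hat{\s}}(\fL)$ by \propref{decomp} and \propref{CFR} gives $\Ber^{\hat{\s}}(\fL)=\det(J^{t})\cdot\Bers\big(\ovps(\rLswz)\big)$, while \propref{decomp-2} and \propref{CFR} give $\Ber^{\hat{\s}}(\fL)=\Bers(J')\cdot\det\big(\ovpd(\rLdzw)^{t}\big)$. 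Now $\det(J^{t})=\det(J)=\prod_{a=1}^{\dd}(w-w_a)^{\bjp_a}$, and $\Bers(J')=\prod_{i=1}^{\el}(z-z_i)^{[\jp_i]}$ since $J'$ is block-diagonal and upper triangular, so that its principal quasiminors are its diagonal entries and the exponents $\hs_i=(-1)^{s_i}$ assemble into the $[\jp_i]$ (alternatively, one peels the Jordan blocks off one at a time using \propref{decomp}, \propref{CFR} and \propref{Ber1n}); since moreover the Poisson homomorphisms $\ovps,\ovpd$ commute with $\Bers,\det$ and $\det$ is transpose-invariant, equating the two expressions for $\Ber^{\hat{\s}}(\fL)$ yields the identity.

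I expect the only genuine work to be bookkeeping rather than a new idea. One has to track the transposes --- since $\rLdzw$ is built from $e_{ij}$, not from the transposed $e_{ji}$ used in \eqref{wtL}, the upper-left block of $\fL$ is $J^{t}$, and it is the transpose-invariance of $\det$ that lets one identify $\det\big(\ovpd(\rLdzw)^{t}\big)$ with $\ovpd\big(\det(\rLdzw)\big)$ --- as well as the parities that make $\fL$ genuinely of type $\hat{\s}$ and the signs in \eqref{vpd-e}--\eqref{vps-e}. Checking that $\fL$, $J$ and $J'$ are amply invertible over $\oD \blb z^{-1}, w^{-1} \brb$ is routine and rests on each $z-z_i$ and $w-w_a$ being invertible there. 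Conceptually the argument is in fact lighter than that of \thmref{cdet-Ber}: supercommutativity removes the need to verify the Manin condition, turns column determinants into ordinary ones, and makes \propref{Ber1n} available. One could alternatively try to read off the identity by passing to associated graded algebras in \thmref{cdet-Ber}, but matching the symbols of $\cdet$ and $\Bers$ to the untransformed matrices $\rLdzw$ and $\rLswz$ costs about the same effort, so I would favor the direct computation.
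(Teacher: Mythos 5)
Your proposal is correct and follows essentially the same route as the paper: the paper's proof simply writes out $\rLdzw$ and $\rLswz$ via \eqref{Phmuz}, invokes ``the same argument as in \thmref{cdet-Ber}'' to get the identity for $\rLdzw^t$, and then uses $\det(\rLdzw^t)=\det(\rLdzw)$ — exactly the block-decomposition computation and transpose bookkeeping you spell out. Your explicit evaluation of $\det(J^t)$ and $\Bers(J')$ and the units argument for the ``consequently'' clause are just the details the paper leaves implicit.
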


\begin{proof}
By \eqref{Phmuz},
$$
\rLdzw= \bigoplus_{a=1}^{\dd} \big(\! - \J_{\bjp_a}(w_a-w) \big)- \ls \sum_{i=1}^{\el} \sum_{k=0}^{\jp_i-1}  \frac{e_{ab}\otimes \otzi^k}{(z-z_i)^{k+1}} \rs_{\! a, b=1, \ldots, d}
$$
and
$$
\rLswz=\bigoplus_{i=1}^{\el}\big(\! -  \J_{\jp_i}(z_i-z) \big)- \ls (-1)^{|i|} \sum_{a=1}^{\dd} \sum_{k=0}^{\bjp_a-1}  \frac{E^i_j \otimes \otwa^k}{(w-w_a)^{k+1}} \rs_{\! i,j \in \I}.
$$
By the same argument as in \thmref{cdet-Ber}, we see that
$$
 \prod_{i=1}^{\el} (z-z_i)^{[\jp_i]}  \cdot \ovpd \! \lb \det\!  \lb \rLdzw^t  \rb \rb =  \prod_{a=1}^{\dd} (w-w_a)^{\bjp_a} \cdot  \ovps \! \lb  \Bers \!  \lb \rLswz \rb \rb \!.
$$
Since $\det\!  \lb \rLdzw^t  \rb=\det\!  \lb \rLdzw  \rb$, the theorem follows.
\end{proof}

If any of $p,q,m,n$ is set to 0, we will obtain a special version of the duality. 
Taking $p=q=n=0$, the identity in \thmref{class-dual} recovers \cite[Theorem 3.2]{VY}. Taking $p=q=m=0$, it recovers \cite[Theorem 3.4]{VY} by \propref{Ber1n}.
In other words, \thmref{class-dual} yields the following corollaries.

\begin{cor} [{\cite[Theorem 3.2]{VY}}]
Suppose $p=q=n=0$. Then $\ovpd \big( \Cdzw \big)=\ovps \big(  \ov\cA_{m}^{ \z,  \jjp}(\w, \bjjp) \big)$.
\end{cor}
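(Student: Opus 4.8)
The plan is to obtain this corollary as the $p=q=n=0$ specialization of \thmref{class-dual}. First I would record what happens under this specialization: necessarily $p^\prime=q^\prime=n^\prime=0$, so $\el=m^\prime$, the only nonvacuous relation in \eqref{rel} reads $\jp_1+\cdots+\jp_{\el}=m$, and since each index $i\in\{1,\dots,\el\}$ satisfies $p^\prime+q^\prime+1=1\le i\le m^\prime=p^\prime+q^\prime+m^\prime$, formula \eqref{jp} gives $[\jp_i]=\jp_i$ for all $i$. Moreover $\ds=m|0$, so $\gls=\gl_m$ and the distinguished sequence becomes $\s=(0^m)$. Because $\cS(\glsw)$ is (even-)commutative, every matrix of type $(0^m)$ over $\cS(\glsw)\blb z^{-1},w^{-1}\brb$ is automatically a Manin matrix, and \propref{CFR} — together with the fact that $\cdet=\det$ for commuting entries — yields ${\rm Ber}^{(0^m)}=\det$. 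In particular $\Bers(\rLswz)=\det(\rLswz)$, with $\rLswz$ now an honest $m\times m$ matrix over a commutative ring.

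Substituting these reductions into the identity of \thmref{class-dual} turns it into
$$
\prod_{i=1}^{\el}(z-z_i)^{\jp_i}\cdot\ovpd\bigl(\det(\rLdzw)\bigr)=\prod_{a=1}^{\dd}(w-w_a)^{\bjp_a}\cdot\ovps\bigl(\det(\rLswz)\bigr).
$$
Next I would verify that this is verbatim \cite[Theorem 3.2]{VY}: with $p=q=n=0$ the $Y$- and $P_Y$-blocks occurring in the proof of \thmref{class-dual} (cf.\ the blocks $Y,P_Y$ in the proof of \thmref{cdet-Ber}) are empty, so $\ovps$ is built solely from the variables $x^a_i$ and $\ppxai$, and the matrices $\rLdzw$, $\rLswz$ together with the Poisson homomorphisms $\ovpd,\ovps$ then match, term by term, the data appearing in Vicedo--Young's classical $(\gld,\gl_m)$ duality.

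Finally, the asserted algebra equality $\ovpd(\Cdzw)=\ovps\bigl(\ov\cA_m^{\z,\jjp}(\w,\bjjp)\bigr)$ is immediate: under the notational identification $\Crwz=\ov\cA_m^{\z,\jjp}(\w,\bjjp)$ (valid precisely because $\ds=m|0$ here), it is the ``consequently'' conclusion of \thmref{class-dual}; alternatively one reads it off the displayed identity above exactly as in the proof of \thmref{class-dual}, expanding $\det(\rLdzw)$ in powers of $w$ and $\det(\rLswz)$ in powers of $z$ and clearing the invertible scalar factors $\prod_i(z-z_i)^{\jp_i}$ and $\prod_a(w-w_a)^{\bjp_a}$. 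I do not expect a genuine obstacle here, since the real content already lies in \thmref{class-dual}; the only points meriting attention are the routine bookkeeping — that ${\rm Ber}^{(0^m)}$ truly collapses to $\det$ on specialization (handled by \propref{CFR}), that the index ranges in \eqref{rel} and \eqref{jp} behave as claimed when $p^\prime=q^\prime=n^\prime=0$, and that no residual signs survive when the $q$- and $n$-indexed rows and columns are deleted, so that our Berezinian normalization agrees with Vicedo--Young's determinantal one.
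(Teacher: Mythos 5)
Your proposal is correct and follows exactly the route the paper intends: the corollary is the $p=q=n=0$ specialization of \thmref{class-dual}, with $[\jp_i]=\jp_i$ for all $i$ and $\Bers$ collapsing to $\det$ via \propref{CFR} since $\s=(0^m)$ over a commutative ring. The paper gives no further argument beyond this observation, so your additional bookkeeping (index ranges in \eqref{rel} and \eqref{jp}, emptiness of the $Y$- and $P_Y$-blocks) is merely a more explicit rendering of the same proof.
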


\begin{cor} [{\cite[Theorem 3.4]{VY}}]
Suppose $p=q=m=0$. Then $\ovpd \big( \Cdzw \big)=\ovps \big(  \ov\cA_{0|n}^{ \z,  \jjp}(\w, \bjjp) \big)$.
\end{cor}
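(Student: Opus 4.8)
The plan is to rerun the proof of \thmref{cdet-Ber} in the supercommutative (Poisson) setting, with the pseudo-differential variable $\pz$ there now played by the honest commuting variable $z$, and the variable $z$ there played by $w$. Using \eqref{Phmuz}, I would first record over the supercommutative superalgebra $\oD\blb z^{-1}, w^{-1}\brb$ the Jordan-block-plus-oscillator forms of $\ovpd\big(\rLdzw^t\big)$ and $\ovps\big(\rLswz\big)$, mirroring \eqref{wLdzw} and \eqref{Lswz}; here one works with the transpose $\rLdzw^t$, so that $\ovpd\big(\rLdzw^t\big)$ matches the matrix $\wLdzw$ of \eqref{wLdzw} after applying $\ovpd$ entrywise. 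By \propref{map-classical} the relevant entries $\ovpd(e_{ab}\otimes\otzi^k)$ and $\ovps(E^i_j\otimes\otwa^k)$ are simply the symbols of the corresponding elements of $\cD$ from the proof of \thmref{cdet-Ber}, obtained by replacing each derivative $\pxai$, $\pyar$ with $\ppxai$, $\ppyar$.

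Next I would form the same block matrix $\fL$ as in \thmref{cdet-Ber}, with diagonal blocks $J^t=\bigoplus_{a=1}^{\dd}\big(\! - \J_{\bjp_a}(w_a-w) \big)^t$ and $J^\prime=\bigoplus_{i=1}^{\el}\big(\! - \J_{\jp_i}(z_i-z) \big)$ and with the oscillator matrices $M$, $M^\prime$ in the off-diagonal positions. Since $\oD\blb z^{-1}, w^{-1}\brb$ is supercommutative, $\fL$ is automatically a Manin matrix of type $\hat{\s}:=(0^d,\s)$, and it is amply invertible because each $w_a-w$ and $z_i-z$ is invertible in this ring. Computing ${\rm Ber}^{\hat{\s}}(\fL)$ two ways — via \propref{decomp} and \propref{CFR}, and via \propref{decomp-2} and \propref{CFR} — and using \eqref{J-inv} to identify the two Schur complements with $\ovps(\rLswz)$ and $\ovpd(\rLdzw^t)$ exactly as in \thmref{cdet-Ber}, I obtain
$$
\prod_{a=1}^{\dd}(w-w_a)^{\bjp_a}\cdot\ovps\big({\rm Ber}^{\s}(\rLswz)\big)={\rm Ber}^{\hat{\s}}(\fL)={\rm Ber}^{\s}(J^\prime)\cdot\ovpd\big(\det(\rLdzw^t)\big).
$$
Here $\det(J^t)=\prod_{a=1}^{\dd}(w-w_a)^{\bjp_a}$, while applying \propref{decomp} repeatedly to the block-diagonal $J^\prime$ and then \propref{CFR} to its type-$0$ blocks and \propref{Ber1n} to its type-$1$ blocks (which, by \eqref{rel}, are exactly the blocks of constant type) gives ${\rm Ber}^{\s}(J^\prime)=\prod_{i=1}^{\el}(z-z_i)^{[\jp_i]}$. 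Since $\det(\rLdzw^t)=\det(\rLdzw)$, this is the displayed identity in the theorem.

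For the remaining assertion I would argue as in the passage from \thmref{cdet-Ber} to \thmref{Gaudin-duality}. By \secref{cl-gld} and \secref{cl-glr}, the coefficients of $\ovpd(\det(\rLdzw))$ in its $w$-expansion generate $\ovpd\big(\Cdzw\big)$, and the coefficients of $\ovps({\rm Ber}^{\s}(\rLswz))$ in its $z$-expansion generate $\ovps\big(\Crwz\big)$. Multiplying $\ovpd(\det(\rLdzw))$ by the invertible scalar factor $\prod_{i=1}^{\el}(z-z_i)^{[\jp_i]}\in\C\blb z^{-1}\brb$ rewrites its coefficients as $\C$-linear combinations of one another, and vice versa, so the subalgebra of $\oD$ they generate is unchanged; the same holds on the $\gls$ side with the polynomial $\prod_{a=1}^{\dd}(w-w_a)^{\bjp_a}$. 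Hence the displayed identity forces $\ovpd\big(\Cdzw\big)=\ovps\big(\Crwz\big)$.

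I expect no genuinely new obstacle relative to \thmref{cdet-Ber}: passing from $\cD$ and its pseudo-differential operators to $\oD$ with two independent commuting variables $z,w$ only simplifies the Manin-matrix and invertibility bookkeeping, since every matrix of a given type is then automatically Manin. The two points that still need attention are (i) keeping the two spectral variables straight and carrying $\rLdzw^t$ rather than $\rLdzw$ through the computation, invoking transpose-invariance of the determinant only at the end; and (ii) confirming, via \propref{map-classical} and the estimates $\vpd\big(U_i(\gldzb)\big)\subseteq\cD_{2i}$, $\vps\big(U_i(\glsw)\big)\subseteq\cD_{2i}$, that the entries of $\ovpd(\rLdzw)$ and $\ovps(\rLswz)$ are literally the symbols of the corresponding quantum matrix entries, so that the block-matrix computation of \thmref{cdet-Ber} transfers verbatim.
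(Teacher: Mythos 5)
Your proposal is correct and follows essentially the same route as the paper: the corollary is the specialization $p=q=m=0$ of \thmref{class-dual}, whose proof the paper gives precisely by rerunning the block-matrix/Berezinian factorization argument of \thmref{cdet-Ber} in the Poisson superalgebra $\oD\blb z^{-1},w^{-1}\brb$ (working with $\rLdzw^t$ and invoking $\det(\rLdzw^t)=\det(\rLdzw)$ at the end), with \propref{Ber1n} accounting for the purely odd blocks exactly as you use it to evaluate ${\rm Ber}^{\s}(J')$. The only cosmetic difference is that you prove the general identity and let the corollary fall out, rather than citing \thmref{class-dual} directly.
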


\begin{cor}
Set $q=n=0$. Then $\ovpd \big( \Cdzw \big)=\ovps \big(  \ov\cA_{p+m}^{ \z,  \jjp}(\w, \bjjp) \big)$.
\end{cor}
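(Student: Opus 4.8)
The plan is to carry out, over the supercommutative Poisson superalgebra $\oD \blb z^{-1}, w^{-1} \brb$, the same block-matrix computation used to prove \thmref{cdet-Ber}; since $\oD$ is supercommutative the argument actually simplifies, as every matrix that appears is automatically a Manin matrix. First I would expand the two matrices explicitly. Using \eqref{Phmuz} together with the definitions \eqref{muw} of $\mu_{\bjjp}^{\w}$ and \eqref{ovmuz} of $\nu_{\jjp}^{\z}$, one finds
$$
\rLdzw= \bigoplus_{a=1}^{\dd} \big(\! - \J_{\bjp_a}(w_a-w) \big)- \ls \sum_{i=1}^{\el} \sum_{k=0}^{\jp_i-1}  \frac{e_{ab}\otimes \otzi^k}{(z-z_i)^{k+1}} \rs_{\! a, b=1, \ldots, d},
$$
and the analogous formula for $\rLswz$, whose spectral variable is $z$, whose Jordan part is $\bigoplus_{i}\big(\!-\J_{\jp_i}(z_i-z)\big)$, and whose denominators are $(w-w_a)^{k+1}$. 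Applying $\ovpd$ and $\ovps$ and invoking \propref{map-classical}, the images of the oscillator entries are obtained from \eqref{vpd-e} and \eqref{vps-e} by replacing the derivatives $\pxai$, $\pyar$ with the Poisson variables $\ppxai$, $\ppyar$; this produces, exactly as in the proof of \thmref{cdet-Ber}, the $\oD$-matrices $X$, $P_X$, $Y$, $P_Y$ and the rectangular blocks $M$ and $M^\prime$.

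Next I would form the block matrix
$$
\fL=\begin{bmatrix} J^t & M \\ M^\prime & J^\prime \end{bmatrix},
$$
where $J=\bigoplus_a\big(\!-\J_{\bjp_a}(w_a-w)\big)$ and $J^\prime=\bigoplus_i\big(\!-\J_{\jp_i}(z_i-z)\big)$. Since $\oD$ is supercommutative, $\fL$ is automatically an amply invertible Manin matrix of type $\hat{\s}:=(0^d,\s)$, so no Manin-matrix verification is needed. The crux is to compute ${\rm Ber}^{\hat{\s}}(\fL)$ in two ways. Removing the top-left $d\times d$ block via \propref{decomp} and \propref{CFR} gives ${\rm Ber}^{\hat{\s}}(\fL)=\det(J^t)\cdot\Bers\big(J^\prime-M^\prime(J^t)^{-1}M\big)$; by \eqref{J-inv} this Schur complement is $\ovps(\rLswz)$, and $\det(J^t)=\prod_a(w-w_a)^{\bjp_a}$. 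Removing the bottom-right block via \propref{decomp-2} and \propref{CFR} instead gives ${\rm Ber}^{\hat{\s}}(\fL)=\Bers(J^\prime)\cdot\det\big(J^t-M(J^\prime)^{-1}M^\prime\big)$; here the second factor is $\ovpd(\rLdzw^t)$, and $\Bers(J^\prime)=\prod_{i=1}^{\el}(z-z_i)^{[\jp_i]}$. This last equality is the only place I would use \propref{Ber1n}: applying \propref{decomp} block by block to $J^\prime$, each Jordan sub-block $-\J_{\jp_i}(z_i-z)$ of type $0^{\jp_i}$ contributes $\det(-\J_{\jp_i}(z_i-z))=(z-z_i)^{\jp_i}$, whereas a sub-block of type $1^{\jp_i}$ contributes the inverse determinant $(z-z_i)^{-\jp_i}$, and the resulting sign pattern is precisely $[\jp_i]$ of \eqref{jp} by the partition \eqref{rel}. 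Equating the two expressions for ${\rm Ber}^{\hat{\s}}(\fL)$ and using $\det(\rLdzw^t)=\det(\rLdzw)$ yields the displayed identity.

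For the final assertion, I would argue as follows: by construction in \secref{cl-gld} and \secref{cl-glr}, $\Cdzw$ is generated by the coefficients of $\det(\rLdzw)$ regarded as a polynomial in $w$, and $\Crwz$ by the coefficients of $\Bers(\rLswz)$ regarded as a series in $z$. The identity just proved shows that $\ovpd\big(\det(\rLdzw)\big)$ and $\ovps\big(\Bers(\rLswz)\big)$ differ only by the factor $\prod_a(w-w_a)^{\bjp_a}\big/\prod_i(z-z_i)^{[\jp_i]}$, a Laurent monomial in the linear forms $w-w_a$ and $z-z_i$; multiplying by it replaces the coefficients by $\C$-linear combinations of coefficients, so the subalgebras of $\oD$ generated by the coefficients of the two sides coincide, that is, $\ovpd\big(\Cdzw\big)=\ovps\big(\Crwz\big)$.

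I do not expect a genuine obstacle: the supercommutativity of $\oD$ makes the proof of \thmref{cdet-Ber} transcribe almost word for word, with $\cdet$ replaced by $\det$ and with \propref{Ber1n} now available to dispose of the $1$-type Jordan blocks directly. The only two points that still need attention — both entirely routine — are the check, via \propref{map-classical}, that $\ovpd$ and $\ovps$ really send the matrix entries to the expected monomials of $\oD$, and the sign-and-type bookkeeping in the two Schur-complement identifications, which is identical to the quantum case.
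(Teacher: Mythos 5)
Your argument is correct and is essentially the paper's own route: the paper obtains this corollary as an immediate specialization of \thmref{class-dual}, whose proof is exactly the two-sided Schur-complement computation of $\Ber^{\hat{\s}}(\fL)$ (transcribed from \thmref{cdet-Ber} to the supercommutative setting) that you reproduce, followed by the observation that the Laurent-monomial prefactors do not change the subalgebras generated by the coefficients. The only cosmetic difference is that in the case $q=n=0$ all Jordan blocks of $J^\prime$ have type $0$, so \propref{Ber1n} is not actually needed and $[\jp_i]=\jp_i$ throughout.
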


\vskip 0.5cm
\noindent{\bf Acknowledgments.}
The first author was partially supported by NSTC grant 113-2115-M-006-010.
The second author was partially supported by NSTC grant 112-2115-M-006-015-MY2.
The authors would like to thank the referee for valuable comments and suggestions.

\bigskip
\frenchspacing

\end{document}